\newtheorem{theorem}{Theorem}[section]
\newtheorem{lemma}[theorem]{Lemma}
\newtheorem{proposition}[theorem]{Proposition}
\newtheorem{corollary}[theorem]{Corollary}
\newcommand{\ZZ}{\mathbb {Z}}
\newcommand{\CC}{\mathbb {C}}
\newcommand{\TT}{\mathbb {T}}
\newcommand{\y}{Y_{m,n}}
\newcommand{\n}{N_{m,n}}
\newcommand{\m}{M_{m,n}}
\newcommand{\FF}{\mathbb{F}}
\newcommand{\ta}{\mathcal{T}^{+}}
\newcommand{\gr}{\widetilde{\operatorname{gr}}}
\def \sp {\operatorname{Spin}^{c}\nolimits }
\begin{document}
\title{Double plumbings of disk bundles over spheres}
\author{Eva Horvat}
\address{Faculty of Education\\
University of Ljubljana\\
Kardeljeva plo\v s\v cad 16\\
1000 Ljubljana, Slovenia}
\email{eva.berdajs@pef.uni-lj.si}
\date{July 18, 2013}
\keywords{plumbings of disk bundles, Heegaard--Floer homology, geometric intersections}
\subjclass[2000]{57M27, 57R95}
\begin{abstract}
We consider double plumbings of two disk bundles over spheres. We calculate the Heegaard--Floer homology with its absolute grading of the boundary of such a plumbing. Given a closed smooth 4--manifold $X$ and a suitable pair of classes in $H_{2}(X)$, we investigate when this pair of classes may be represented by a configuration of surfaces in $X$ whose regular neighborhood is a double plumbing of disk bundles over spheres. Using similar methods we study single plumbings of two disk bundles over spheres inside $X$.
\end{abstract}
\maketitle

\begin{section}{Introduction}
Given a smooth closed connected 4--manifold $X$ and a finite set of classes $C\subset H_{2}(X)$, an important question is what is the simplest configuration of surfaces in $X$ representing $C$. By simple we mean that each class should be represented by a surface of low genus and that the surfaces should have a low number of geometric intersections. Since it is always possible to remove cancelling pairs of intersection points by increasing the genus of a surface, both properties should be taken into account. Considerable work has been done to investigate the minimal genus of a given class in $H_{2}(X)$, first by proving the Thom conjecture (Kronheimer--Mrowka \cite{KM}) and then its generalizations by Morgan--Szab\'o--Taubes \cite{MST} and Ozsv\'ath--Szab\'o \cite{OS6}. When considering a con\-fi\-gu\-ra\-tion of surfaces, the sum of their genera is closely related to the number of their geometric intersections, as shown by Gilmer \cite{GILMER}. He showed that the the minimal number of such intersections can be estimated using the Casson--Gordon invariant. This estimate has been improved by Strle \cite{SASO} for configurations of $n=b_{2}^{+}(X)$ algebraically disjoint surfaces of positive self-intersection by an application of the Seiberg--Witten equations on a cylindrical end manifold. 

Multiple plumbings of two trivial disk bundles over spheres have been investigated by Sunukjian in his thesis \cite{NS}. He calculated the Heegaard-Floer homology of the boundary of such plumbings in cases where the two spheres are plumbed either once or zero times algebraically and $n$ times geometrically. 

We investigate the double plumbing $\n $ of two disk bundles with Euler classes $m$ and $n$ over spheres, which represents the simplest case of a configuration of two surfaces with algebraic (and geometric) intersection 2. We calculate the $d_{b}$--invariants of the Heegaard--Floer homology of $\partial \n $ \cite{OS2} and use an obstruction theorem \cite[Theorem 9.15]{OS4} to see when $\n $ can indeed be realized inside a given 4--manifold $X$ with $b_{2}^{+}(X)=2$. 

Denote by $\y $ the boundary of $\n $. For two integers $i$ and $j$, denote by $\mathfrak{t}_{i,j}$ the unique $\sp $ structure on $\n $ for which 
\begin{align}
\label{eqsp}
\langle c_{1}(\mathfrak{t}_{i,j}),s_{1}\rangle +m=2i
\end{align}
\begin{align}
\label{eqsp1}
\langle c_{1}(\mathfrak{t}_{i,j}),s_{2}\rangle +n=2j\;,
\end{align}   
where $s_{1},s_{2}\in H_{2}(\n )$ are the homology classes of the base spheres in the double plumbing. Let $\mathfrak{s}_{i,j}=\mathfrak{t}_{i,j}|_{\y }$. 

Throughout the paper, we denote by $\FF $ the field $\ZZ _{2}$ and by $\mathcal{T}^{+}$ the quotient module $\FF [U,U^{-1}]/U\FF [U]$. Our main result is the following:
\begin{theorem}
\label{th1}
Let $Y=\y $ be the boundary of a double plumbing of two disk bundles over spheres with Euler numbers $m$ and $n$, where $m,n\geq 4$. The Heegaard--Floer homology $HF^{+}(Y,\mathfrak{s})$ with $\FF $ coefficients is given by 
\begin{eqnarray*}
& HF^{+}(Y,\mathfrak{s}_{m-1,1})=\ta _{(d(m-1,1))}\oplus \ta _{(d(m-1,1)-1)}\oplus \FF _{(d(m-1,1)-1)}\\
& HF^{+}(Y,\mathfrak{s}_{i,j})=\ta _{(d(i,j))}\oplus \ta _{(d(i,j)-1)}\\
& HF^{+}(Y,\mathfrak{s}_{0,k})=\ta _{(d_{1}(m,k+1))}\oplus \ta _{(d_{1}(m,k+1)-1)}\\
& HF^{+}(Y,\mathfrak{s}_{l,0})=\ta _{(d_{1}(n,l+1))}\oplus \ta _{(d_{1}(n,l+1)-1)}
\end{eqnarray*}
for $1\leq i\leq m-1$, $1\leq j\leq n-1$, $0\leq k\leq n-2$, $0\leq l\leq m-2$ and $(i,j)\notin \{(m-1,1),(1,n-1)\}$, where the subscripts denote the absolute gradings of the bottom elements and 
\begin{xalignat*}{1}
& d(i,j)=\frac{m^{2}n+mn^{2}-4mn(i+j+1)+4n(i^{2}+2i)+4m(j^{2}+2j)-16ij}{4(mn-4)}\;,\\
& d_{1}(t,i)=\frac{m^{2}n+mn^{2}-4mni+4ti^{2}-4t}{4(mn-4)}\;.
\end{xalignat*}
The action of the exterior algebra $\Lambda ^{*}(H_{1}(Y,\ZZ )/\operatorname{Tors})$ on $HF^{+}(Y,\mathfrak{s})$ maps the first copy of $\ta $ isomorphically to the second copy in each torsion $\sp $ structure $\mathfrak{s}$, dropping the absolute grading of the generator by one. 
\end{theorem}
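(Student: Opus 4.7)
The plan is to present $\y$ via surgery, use the surgery long exact sequence to reduce to simpler manifolds whose $HF^{+}$ is already known, and to extract the absolute gradings from the standard correction-term formula applied to the extending $4$--manifold $\n$. Since two spheres meeting at two points form a $2$--complex with $\pi_{1}=\ZZ$ and Euler characteristic $2$, the $4$--manifold $\n$ has $b_{1}=1$, $b_{2}=2$, and admits a handle decomposition with one $0$--handle, one $1$--handle, and two $2$--handles. This yields a Kirby diagram for $\y$; the intersection form of $\n$ is $Q=\begin{pmatrix} m & 2 \\ 2 & n \end{pmatrix}$, and the torsion $\sp $ structures on $\y$ are parameterized by $\ZZ^{2}/Q\ZZ^{2}$, giving the enumeration $\mathfrak{s}_{i,j}$ used in the statement (with $\mathfrak{s}_{m-1,1}=\mathfrak{s}_{1,n-1}$ reflecting that $(m-1,1)-(1,n-1)\in Q\ZZ^{2}$).

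Because $b_{1}(\y)=1$, $HF^{\infty}(\y,\mathfrak{s})\cong\FF[U,U^{-1}]\otimes\Lambda^{*}\ZZ$ for every torsion $\mathfrak{s}$, which already forces each torsion $\sp $ structure to contribute two towers shifted by one absolute grading --- exactly the $\ta_{(d)}\oplus\ta_{(d-1)}$ structure claimed. To nail down the module, I would apply the surgery exact triangle obtained by varying the framing on one of the $2$--handles,
\[
\cdots \to HF^{+}(Y_{m,n},\mathfrak{s}) \to HF^{+}(Y_{m,n-1},\mathfrak{s}') \to HF^{+}(Y_{m,\infty},\mathfrak{s}'') \to \cdots,
\]
where $Y_{m,\infty}$, the $\infty$--surgery manifold with the second $2$--handle effectively removed, has strictly simpler topology. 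Induction on the framings, combined with the analogous triangle that varies $m$, pins down $HF^{+}$ for all but the exceptional $\sp $ structure $\mathfrak{s}_{m-1,1}$, where the relevant connecting homomorphism fails to kill a residual generator, producing the extra $\FF_{(d-1)}$ summand.

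The absolute gradings come from the Ozsv\'ath--Szab\'o formula for $d$--invariants applied to $\n$, which reduces in our setting to the computation
\[
c_{1}(\mathfrak{t}_{i,j})^{2}=(2i-m,\,2j-n)\,Q^{-1}(2i-m,\,2j-n)^{T},\qquad Q^{-1}=\frac{1}{mn-4}\begin{pmatrix} n & -2 \\ -2 & m \end{pmatrix};
\]
expanding this expression reproduces exactly the numerators of $d(i,j)$ and $d_{1}(t,i)$ in the theorem. The $\Lambda^{*}(H_{1}/\mathrm{Tors})$--action is then determined by its standard action on $HF^{\infty}\cong\FF[U,U^{-1}]\otimes\Lambda^{*}\ZZ$: the generator of $H_{1}(\y)/\mathrm{Tors}=\ZZ$ acts by interior product on the exterior factor, sending $\Lambda^{1}\ZZ$ to $\Lambda^{0}\ZZ$ and thereby mapping the first $\ta$ summand isomorphically onto the second with a grading drop of one. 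The main technical obstacle is that the cycle in the plumbing graph of $\n$ places $\y$ outside the scope of the standard Ozsv\'ath--Szab\'o plumbing formula for negative-definite trees, so the surgery exact sequence must be invoked iteratively, with careful bookkeeping of how each torsion $\sp $ structure on $\y$ restricts to the intermediate manifolds in the triangle and how absolute gradings shift across each surgery; the separate treatment of the boundary families $\mathfrak{s}_{0,k}$, $\mathfrak{s}_{l,0}$ and of the exceptional $\mathfrak{s}_{m-1,1}$ is where the bulk of the case analysis lies.
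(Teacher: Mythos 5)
Your strategy sketch identifies the right global constraints (standard $HF^{\infty}$ for $b_{1}=1$, the enumeration of torsion $\sp$ structures by $\operatorname{coker}Q$, the role of $c_{1}^{2}$), but it has three genuine gaps. First, the claim that standard $HF^{\infty}(\y ,\mathfrak{s})\cong \FF [U,U^{-1}]\otimes \Lambda ^{*}\FF$ ``already forces'' the answer $\ta _{(d)}\oplus \ta _{(d-1)}$ is not correct: standardness of $HF^{\infty}$ only guarantees two towers in sufficiently high degree, and says nothing about where each tower is truncated in $HF^{+}$. The bottoms of the two towers could a priori differ by any odd integer, and finite $U$-torsion summands could appear in any $\sp$ structure. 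That $d_{t}-d_{b}=1$ everywhere, and that exactly one extra $\FF$ appears and only in $\mathfrak{s}_{m-1,1}$, is precisely the content to be proved; the paper does this by an explicit Heegaard diagram computation, showing for the two surviving generators that the only Maslov-index-one classes are $D_{13}$ and $D_{13}+\mathcal{P}$ (with total count even, forcing $\partial ^{+}=0$), and locating the one nontrivial $U$-lowering differential $\partial ^{\infty}[\{x_{1},c_{1},e_{1}\},i]=[\{u_{2},a_{2},e_{1}\},i-1]$ that produces the residual $\FF$ in the class $\mathfrak{s}_{0}=\mathfrak{s}_{m-1,1}$. Your ``the connecting homomorphism fails to kill a residual generator'' is an assertion of the conclusion, not an argument. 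Relatedly, the surgery-triangle induction you propose faces real obstructions you do not address: the triangle maps do not respect individual torsion $\sp$ structures on $\y$ (they sum over orbits of order $mn-4$ versus $m(n-1)-4$, which do not match up class by class), the ranks and extensions are not determined by exactness alone, and the induction must pass through small framings where the hypothesis $m,n\geq 4$ fails and the answer has a different shape.

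Second, the grading argument does not work as stated. Applying the grading-shift formula to the positive-definite cobordism $\n \backslash B^{4}$ from $S^{3}$ gives $\frac{c_{1}(\mathfrak{t}_{i,j})^{2}-2\chi -3\sigma }{4}=\frac{c_{1}^{2}}{4}-2$, not $\frac{c_{1}^{2}}{4}=d(i,j)$, and more importantly one cannot read off the absolute grading of a \emph{specific} generator of $HF^{+}(\y ,\mathfrak{s})$ from a positive-definite filling without knowing that some triangle-counting map hits that generator nontrivially. The paper instead builds a negative-definite two-handle cobordism $W$ from $\y$ to $-L(m,1)\# S^{1}\times S^{2}$ (where the gradings are known from the lens space formula), exhibits explicit Whitney triangles with computable dual spider numbers (Lemma \ref{triangle}), and supplements this with the Lee--Lipshitz relative-grading method for the generators $\{x_{m},b_{j},f_{2}\}$ and $\{x_{i},b_{n},f_{2}\}$, plus separate bookkeeping for the $\sp$ structures computed with a moved basepoint. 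The numerical coincidence $d(i,j)=c_{1}(\mathfrak{t}_{i,j})^{2}/4$ is an a posteriori observation, not a derivation; your computation of $Q^{-1}$ correctly predicts the numerator but does not establish which generator carries which grading, nor the identification of $\mathfrak{s}_{z}(\{x_{i},b_{j},f_{k}\})$ with $\mathfrak{t}_{i,j}|_{\y }$ (Lemma \ref{lemmasp}), which is itself a nontrivial step.
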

We use this result to determine whether the double plumbing $\n $ can occur inside some 4--manifolds $X$ with $H_{2}^{+}(X)=2$. If it can, the complement $W=X\backslash \operatorname{Int}(\n )$ is a negative semi-definite 4--manifold and \cite[Theorem 9.15]{OS4} gives an ob\-struction depending on the correction terms of $\y =\partial W$. 

In the manifold $X=\CC P^{2}\# \CC P^{2}$, every homology class $(x_{1},x_{2})\in H_{2}(X)$ with $(x_{1},x_{2})\in \{0,\pm 1,\pm 2\}^{2}\backslash \{(0,0)\}$ has a smooth representative of genus 0. Choosing two such representatives with algebraic intersection number 2, we check if they can have only 2 geometric intersections. 

Next we consider the manifold $X=S^{2}\times S^{2}\# S^{2}\times S^{2}$. According to Wall \cite{WALL}, every primitive homology class $(x_{1},x_{2},x_{3},x_{4})\in H_{2}(X)$ can be represented by an embedded sphere. We choose two such representatives with algebraic intersection number 2 and determine when the number of their geometric intersections has to be strictly greater than 2, thus not allowing the chosen homology classes to be represented by a double plumbing. We obtain the following estimates.
\begin{theorem}
\label{app}
\quad \\
a) Any two spheres representing classes $(2,2),(2,-1)\in H_{2}(\CC P^{2}\# \CC P^{2})$ intersect with at least $4$ geometric intersections, and there exist representatives with exactly $4$ intersections. \\
b) Let $t\in \mathbb{N}\backslash \{1\}$ and let $a$ be an odd positive integer. Any two spheres re\-pre\-sen\-ting classes $(a,2,0,0),(1,0,t,1)\in H_{2}(S^{2}\times S^{2}\# S^{2}\times S^{2})$ intersect with at least $4$  geometric intersections for all $a\geq 5$. 
\end{theorem}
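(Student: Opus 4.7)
The plan is to proceed by contradiction: suppose two spheres $S_{1},S_{2}\subset X$ representing the given classes meet in exactly two points, which is the minimum allowed by their algebraic intersection number $2$. A regular neighbourhood of $S_{1}\cup S_{2}$ is then diffeomorphic to the double plumbing $\n$, with $m$ and $n$ equal to the self-intersection numbers of the two classes. For part (a), $m=8$ and $n=5$; for part (b), $m=4a$ and $n=2t$. In both settings the symmetric $2\times 2$ form $\bigl(\begin{smallmatrix} m & 2 \\ 2 & n\end{smallmatrix}\bigr)$ is positive definite, using $a\geq 5$ and $t\geq 2$, so the span $\langle [S_{1}],[S_{2}]\rangle$ realises all of $b_{2}^{+}(X)=2$ and the complement $W=X\setminus \operatorname{Int}(\n)$ is a negative semi-definite 4-manifold with $\partial W=\y$.

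The next step is to apply the correction-term inequality \cite[Theorem~9.15]{OS4}: for every $\operatorname{Spin}^{c}$ structure $\mathfrak{t}$ on $W$,
\[
c_{1}(\mathfrak{t})^{2}+b_{2}^{-}(W)\;\leq\;4\,d\bigl(\y,\,\mathfrak{t}|_{\y}\bigr).
\]
Each $\operatorname{Spin}^{c}$ structure on $X$ restricts via Mayer--Vietoris to a compatible pair on $\n$ and $W$ that agree on $\y$, so $\mathfrak{t}|_{\y}=\mathfrak{s}_{i,j}$ for some $(i,j)$ in the range of Theorem \ref{th1}. Substituting the closed-form expressions for $d(i,j)$ and $d_{1}(t,i)$ provided there turns the inequality into an explicit arithmetic condition parametrised by $(i,j)$. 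For part (a) this becomes a finite numerical check with $(m,n)=(8,5)$, where one selects the $\operatorname{Spin}^{c}$ structure on $\mathbb{C}P^{2}\#\mathbb{C}P^{2}$ whose characteristic class maximises $c_{1}(\mathfrak{t})^{2}$ relative to the induced $(i,j)$; for part (b) the same setup gives a two-parameter family of inequalities in $(a,t)$. Once the inequality is shown to fail for some admissible $(i,j)$, the assumption of two geometric intersections is contradicted, and since the algebraic intersection $2$ and the geometric intersection must have the same parity, the minimum is at least four.

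For the sharpness clause in (a), explicit representatives of $(2,2)$ and $(2,-1)$ with exactly four geometric intersections can be constructed by elementary methods: one takes conics and (anti-)holomorphic lines in the two $\mathbb{C}P^{2}$ summands whose total algebraic intersection is $2$, tubes them across the connected sum region to form the two homology classes, and cancels surplus transverse pairs by a local isotopy of immersed spheres, arriving at configurations with precisely four double points.

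The main obstacle will be the arithmetic step in the second paragraph: one must identify the pair $(i,j)$ arising from the restriction of a $\operatorname{Spin}^{c}$ structure on $X$ through the map $H^{2}(X)\to H^{2}(\n)$, and verify that the right-hand side $4\,d(i,j)$ is strictly smaller than $c_{1}(\mathfrak{t})^{2}+b_{2}^{-}(W)$ for at least one such pair. The hypotheses that $a$ is odd and $a\geq 5$, $t\geq 2$ are expected to enter precisely here, both to guarantee that a characteristic cohomology class of the required reduction mod $2$ exists on $W$ and to make the dominant quadratic contribution of $d(i,j)$ outweigh $c_{1}(\mathfrak{t})^{2}$ uniformly in the family of extendable $\operatorname{Spin}^{c}$ structures.
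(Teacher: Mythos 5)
Your overall framework (contradiction, identify the neighbourhood with $\n$, apply the correction-term obstruction to the complement) is the right one, and for part (b) your plan essentially matches the paper's: there $b_{2}^{-}(X)=2$, the complement $W$ carries a genuinely negative definite form $\bigl(\begin{smallmatrix}-m & -2\\ -2 & -n\end{smallmatrix}\bigr)$, and one defines explicit $\sp $ structures $\mathfrak{u}_{i,j}$ on $X$ restricting to $\mathfrak{t}_{i,j}$ on $\n $, computes $c_{1}(\mathfrak{u}_{i,j}|_{W})^{2}$, and reduces the inequality to $i+2j+1\geq a$, which fails for $\mathfrak{s}_{1,1}$ once $a\geq 5$. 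You have not carried out that reduction or exhibited the violating structure, but the route is correct.

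For part (a), however, there is a genuine gap. Since $b_{2}^{-}(\CC P^{2}\# \CC P^{2})=0$, the intersection form of $W=X\backslash \operatorname{Int}(N_{8,5})$ is identically zero (its only free generator comes from the boundary and has square $0$), so $c_{1}(\mathfrak{s})^{2}=0$ for \emph{every} $\sp $ structure on $W$; your plan of ``selecting the $\sp $ structure whose characteristic class maximises $c_{1}(\mathfrak{t})^{2}$'' therefore produces nothing, and the one-sided inequality only says $d_{b}(Y_{8,5},\mathfrak{t})\leq -\tfrac12$ for the (unknown) extendable structures. The paper's argument requires two further inputs you are missing: first, a Mayer--Vietoris/Poincar\'e duality analysis (Proposition \ref{prop1}) showing that the torsion $\sp $ structures on $Y_{8,5}$ that extend over $W$ form a coset of a subgroup $\tau \subset \ZZ _{36}$ with $|\tau |^{2}=36$, i.e.\ exactly six of them; second, because $Q_{W}$ is trivial one may apply Theorem \ref{inequality} to both $(W,-Y_{8,5})$ and $(-W,Y_{8,5})$, forcing the \emph{equality} $d_{b}(Y_{8,5},\mathfrak{t})=-\tfrac12$ on all six. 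The contradiction then comes from the computed table: only four $\sp $ structures ($\mathfrak{s}_{2,2},\mathfrak{s}_{4,4},\mathfrak{s}_{4,1},\mathfrak{s}_{6,3}$) attain $-\tfrac12$. Without the counting of extendable structures and the two-sided use of the inequality, no contradiction is reachable in case (a). (Two smaller points: the inequality you quote omits the $+2b_{1}(Y)$ term and the passage $d_{b}(-\y ,\mathfrak{t})=-d_{t}(\y ,\mathfrak{t})=-d_{b}(\y ,\mathfrak{t})-1$ needed because the obstruction applies to the reversed orientation and to the bottom correction term; and your sharpness construction, ``cancelling surplus transverse pairs by a local isotopy,'' is not justified as stated --- the paper instead builds the representatives explicitly \`a la Ruberman from resolved copies of the handle cores and exhibits the four intersection points directly.)
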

By a similar method we study single plumbings of disk bundles over spheres inside a closed 4--manifold. An obstruction to embedding such configurations is based on the $d$--invariants of lens spaces. 
\begin{theorem}
\label{app1}
Let $k$ be a positive integer. Any two spheres representing classes $(2k+1,2,0,0),(-k,1,2k,1)\in H_{2}(S^{2}\times S^{2}\# S^{2}\times S^{2})$ intersect with at least 3 geometric intersections for all $k>1$.  
\end{theorem}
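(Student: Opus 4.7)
The strategy I propose is parallel to the approach taken for Theorem \ref{app}, substituting the lens space $d$--invariants of Ozsv\'ath--Szab\'o \cite{OS2} for the double-plumbing $d_{b}$--invariants provided by Theorem \ref{th1}. Writing the intersection form of $X=S^{2}\times S^{2}\# S^{2}\times S^{2}$ in the standard basis as $H\oplus H$, a direct computation gives $s_{1}\cdot s_{1}=8k+4$, $s_{2}\cdot s_{2}=2k$ and $s_{1}\cdot s_{2}=1$ for $s_{1}=(2k+1,2,0,0)$ and $s_{2}=(-k,1,2k,1)$. Since every transverse intersection contributes $\pm 1$ to the algebraic intersection, the geometric and algebraic intersection numbers share parity; hence a geometric intersection equal to $2$ is impossible, and it suffices to rule out the case of a single transverse intersection.

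Suppose, toward a contradiction, that $s_{1}$ and $s_{2}$ meet transversely in exactly one point. The regular neighborhood of $s_{1}\cup s_{2}$ is then the single plumbing $N$ of disk bundles over $S^{2}$ with Euler numbers $m=8k+4$ and $n=2k$, whose boundary is the lens space $Y=L(mn-1,n)=L(16k^{2}+8k-1,\,2k)$. Since $mn-1>0$ and $m,n>0$, the classes $s_{1},s_{2}$ span a positive-definite rank-$2$ sublattice of $H_{2}(X)$; together with $b_{2}^{+}(X)=b_{2}^{-}(X)=2$ this forces the complement $W=X\setminus\operatorname{Int}(N)$ to be a negative semi-definite $4$--manifold with $b_{2}^{-}(W)=2$ and $\partial W=-Y$.

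Now \cite[Theorem 9.15]{OS4} yields the inequality
\[
c_{1}(\mathfrak{u})^{2}+b_{2}^{-}(W)\leq 4\,d(Y,\mathfrak{u}|_{Y})+2b_{1}(W)
\]
for every $\sp$ structure $\mathfrak{u}$ on $W$ whose restriction to $Y$ is torsion. I parametrize such $\mathfrak{u}$ by characteristic classes $K\in H^{2}(X)$ using Mayer--Vietoris for $X=N\cup_{Y}W$: every $\sp$ structure on $W$ that glues to a chosen $\sp$ structure on $N$ is the restriction of some $K$. Additivity of the intersection form across this decomposition lets me write $c_{1}(\mathfrak{u})^{2}=K^{2}-k^{T}A^{-1}k$, where $A=\left(\begin{smallmatrix}m&1\\1&n\end{smallmatrix}\right)$ is the single-plumbing form and $k_{i}=\langle K,s_{i}\rangle$ (so that $k^{T}A^{-1}k=\tfrac{1}{mn-1}(nk_{1}^{2}-2k_{1}k_{2}+mk_{2}^{2})$ accounts for the $\sp$ structure on $N$ via the single-plumbing analogue of \eqref{eqsp}--\eqref{eqsp1}). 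Finally I compute $d(Y,\mathfrak{u}|_{Y})$ using the Ozsv\'ath--Szab\'o recursion; the recursion collapses quickly, since $16k^{2}+8k-1\equiv -1\pmod{2k}$, to the $d$--invariants of $L(2k,1)$, which have the closed form $d(L(2k,1),i)=\tfrac{(2i-2k)^{2}}{8k}-\tfrac{1}{4}$.

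The main obstacle is selecting, for each $k>1$, a characteristic class $K$ on $X$ (equivalently, integer parameters $k_{1},k_{2}$ subject to the parity conditions coming from the characteristic requirement on $X$) that produces a $\sp$ structure on $Y$ for which the above inequality is strictly violated. This is an optimization of an explicit rational quadratic in $k_{1},k_{2}$ against an explicit rational function of $k$, and the hypothesis $k>1$ is expected to appear as the precise threshold at which such a violation exists; the case $k=1$ should fall outside the range of this obstruction, consistent with the statement of the theorem.
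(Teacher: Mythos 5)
Your overall strategy coincides with the paper's: use the parity of the algebraic intersection number to reduce to excluding a single geometric intersection, identify the boundary of the single plumbing as the lens space $L(mn-1,n)$ with $m=8k+4$, $n=2k$, and obstruct the existence of the negative definite complement via \cite[Theorem 9.15]{OS4} together with the Ozsv\'ath--Szab\'o $d$--invariants of lens spaces. The algebraic setup is essentially right: the decomposition $c_{1}(\mathfrak{u})^{2}=K^{2}-k^{T}A^{-1}k$ is correct and is equivalent to the paper's direct computation of $c_{1}(\mathfrak{t}_{i}|_{V})^{2}$ in a basis of the orthogonal complement of $\langle \alpha ,\beta \rangle $ in $H_{2}(X)$.

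However, the proof is not complete, because the entire content of the theorem lies in the step you defer. You must actually exhibit, for each $k>1$, a $\sp $ structure on the complement for which the inequality of Theorem \ref{inequality} fails, and explain why the argument produces nothing for $k=1$; saying that the threshold ``is expected to appear'' is a plan, not an argument. The paper carries this out by taking the $\sp $ structure $\mathfrak{t}_{i}$ on $X$ with $\langle c_{1}(\mathfrak{t}_{i}),\alpha \rangle =m$ and $\langle c_{1}(\mathfrak{t}_{i}),\beta \rangle =n-2i$, restricting it to $V=X\backslash \operatorname{Int}(M_{m,n})$ (whose second homology is spanned by $\gamma =-(2k+1)e_{1}+2e_{2}+(4k+1)e_{3}$ and $\delta =-2ke_{3}+e_{4}$), and comparing $c_{1}(\mathfrak{t}_{i}|_{V})^{2}+2$ with $4d(-L(mn-1,n),i)$ computed from the closed formula \eqref{lensD}; after simplification the inequality reduces to $(mn-1)(k-i-1)\geq 0$, which fails for $i=k$, and $i=k$ lies in the range $0\leq i<n-1=2k-1$ where \eqref{lensD} applies exactly when $k>1$. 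Until you perform this optimization explicitly, the theorem is unproved. Two further slips to repair when you do: the boundary term in \cite[Theorem 9.15]{OS4} is $2b_{1}(Y)$, not $2b_{1}(W)$ (harmless here, since both vanish), and since $\partial W=-Y$ the correction term entering the inequality is $d(-Y,\cdot )=-d(Y,\cdot )$, so the sign of the $d$--term in your displayed inequality is wrong as written; with that orientation confused, the direction of the final numerical comparison --- the whole point of the argument --- comes out backwards.
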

This paper is organized as follows. In Subsection \ref{HD} we describe a Heegaard diagram for $\y =\partial \n $. In \ref{CF} we present the corresponding chain complex $\widehat{CF}(\y )$ along with its decomposition into equivalence classes of $\sp $ structures and calculate the homology $HF^{+}(\y ,\mathfrak{s})$ in all torsion $\sp $ structures on $\y $. In Subsection \ref{absolute} we compute the absolute gradings $\gr $ of the generators of these groups which in turn determine the correction term  invariants $d_{b}(\y ,\mathfrak{s})$ for all torsion $\sp $ structures $\mathfrak{s}$ on $\y $. The first part of Section \ref{App} describes the general homological setting in which the double plumbing $\n $ arises as a submanifold in a closed 4--manifold $X$. In Subsection \ref{CP} we consider the case $X=\CC P^{2}\# \CC P^{2}$ and in Subsection \ref{S2} the case $X=S^{2}\times S^{2}\# S^{2}\times S^{2}$. In Section \ref{One} we investigate single plumbings of two disk bundles over spheres. We consider such configurations inside the manifold $\CC P^{2}\# \CC P^{2}$ in Subsection \ref{CP1} and inside the manifold $S^{2}\times S^{2}\# S^{2}\times S^{2}$ in Subsection \ref{S21}.
\newline

{\bf Acknowledgments:} I would like to thank my advisor Sa\v so Strle for all his help and support during our numerous discussions. I am also very grateful to the referees for a careful reading, many helpful comments and suggestions. 
\end{section}
\newpage

\begin{section}{Heegaard--Floer homology of the boundary of a double plumbing}
Let $N_{m,n}$ be the double plumbing of two disk bundles over spheres, where $m$ and $n$ denote the Euler numbers of the disk bundles contained in the plumbing. The base spheres of the bundles intersect twice inside the plumbing and we assume both intersections carry the same sign. Denote by $Y_{m,n}$ the boundary of $N_{m,n}$. Throughout this paper we assume $m,n\geq 4$. In this section we calculate $HF^{+}(Y_{m,n})$ with $\FF $ coefficients and prove Theorem \ref{th1}.  

\begin{subsection}{Heegaard diagram}
\label{HD}
Considering a Kirby diagram of the plumbing $N_{m,n}$ as a surgery diagram for $Y_{m,n}$, we derive the Heegaard diagram of its boundary. A disk bundle over a sphere is given by a single framed circle, and a double plumbing of two such bundles is represented by the Kirby diagram in Figure \ref{fig:kirby}. The second plumbing contributes a 1-handle. Instead of adding the 1-handle one can remove its complementary 2-handle with framing zero. The boundary of the resulting manifold remains unchanged if we replace the 1-handle by its complementary 2-handle with framing zero and obtain a Kirby diagram which is a link of three framed unknots $K_{1},K_{2}$ and $K_{3}$ in $S^{3}$. 
\begin{figure}[here]
\labellist
\normalsize \hair 2pt
\pinlabel $K_{1}$ at 120 440
\pinlabel $K_{2}$ at 700 440
\pinlabel $K_{3}$ at 400 320
\pinlabel $S^{2}$ at 800 190
\pinlabel $m$ [b] at 310 460
\pinlabel $n$ [b] at 530 460
\endlabellist
\begin{center}
\includegraphics[scale=0.50]{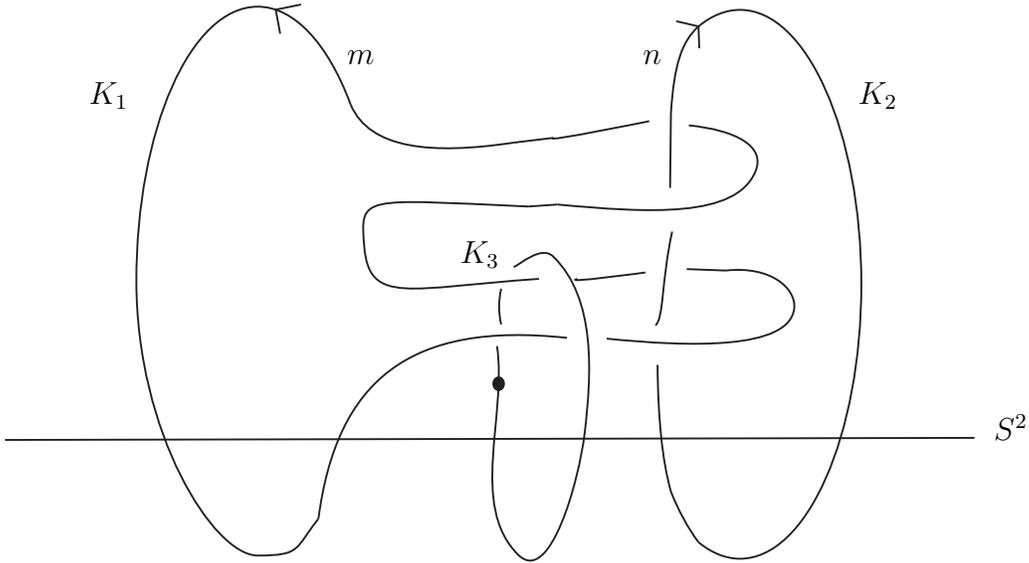}
\caption{The Kirby diagram of a double plumbing}
\label{fig:kirby}
\end{center}
\end{figure}
To obtain the Heegaard diagram of the boundary, we split the 3-sphere into two balls along the sphere $S^{2}$ shown in Figure \ref{fig:kirby}. Surgery along the three framed circles $K_{i}$ gives us two handlebodies of genus 3. The Heegaard diagram is drawn on the plane with three 1-handles added. The lower handlebody is a boundary connected sum of regular neighborhoods of the three circles $K_{1}$, $K_{2}$ and $K_{3}$. We denote by $\mu _{i}$ and $\lambda _{i}$ the meridian and longitude of the regular neighborhood of $K_{i}$ respectively. Each of the curves $\alpha _{i}$ is homologous to $\lambda _{i}$, and the curve $\beta _{i}$ corresponds to the framing of $K_{i}$ for $i=1,2,3$ (see Figure \ref{fig:heeg}). 

Thus, the first homology group $H_{1}(\y )$ is given by $$H_{1}(\y )=\left \langle \mu _{1},\mu _{2},\mu _{3}|\, m\mu _{1}+2\mu _{2}=0,\,2\mu _{1}+n\mu _{2}=0\right \rangle =\ZZ \langle \mu _{3}\rangle \oplus T\langle \mu _{1},\mu _{2}\rangle \;.$$ If at least one of the numbers $m,n$ is odd, the torsion group $T$ is cyclic and we get $T\langle \mu _{1},\mu _{2}\rangle =\ZZ _{mn-4}\langle \mu _{i}\rangle $ (if $m$ is odd and $n$ is even then $i=1$, if $n$ is odd and $m$ is even then $i=2$, if both $m$ and $n$ are odd then $i$ could either be $1$ or $2$). If both $m$ and $n$ are even numbers, then $T\langle \mu _{1},\mu _{2}\rangle =\ZZ _{\frac{mn-4}{2}}\langle \mu _{1}\rangle \oplus \ZZ _{2}\langle \frac{m}{2}\mu _{1}+\mu _{2}\rangle $. 
\end{subsection}

\begin{subsection}{The chain complex}
\label{CF}
We denote the intersections between the $\alpha $ and $\beta $ curves as follows (see Figure \ref{fig:heeg}): \\$\alpha _{1}\cap \beta _{1}=\{x_{1},x_{2},\ldots ,x_{m}\}$, $\alpha _{1}\cap \beta _{2}=\{y_{1},y_{2}\}$, $\alpha _{1}\cap \beta _{3}=\{u_{1},u_{2}\}$, $\alpha _{2}\cap \beta _{1}=\{a_{1},a_{2}\}$, $\alpha _{2}\cap \beta _{2}=\{b_{1},b_{2},\ldots ,b_{n}\}$, $\alpha _{2}\cap \beta _{3}=\{c_{1},c_{2}\}$, $\alpha _{3}\cap \beta _{1}=\{d_{1},d_{2}\}$, $\alpha _{3}\cap \beta _{2}=\{e_{1},e_{2}\}$, $\alpha _{3}\cap \beta _{3}=\{f_{1},f_{2}\}$. \\
The chain complex $\widehat{CF}(Y)$ is generated by unordered triples $$\TT _{\alpha }\cap \TT _{\beta }=\{\{x_{i},b_{j},f_{k}\},\{x_{i},c_{k},e_{l}\},\{y_{k},a_{l},f_{r}\},\{y_{k},c_{l},d_{r}\},\{u_{k},a_{l},e_{r}\},\{u_{k},b_{j},d_{l}\}\}$$ for $k,l,r\in \{1,2\}$ and $i=1,\ldots ,m$ and $j=1,\ldots n$. 

The complement of the $\alpha $ and $\beta $ curves in the Heegaard diagram is a disjoint union of elementary domains. There are two regions in the diagram where a curve $\beta _{i}$ winds around a hole in the direction of $\mu _{i}$; we denote the elementary domains in the winding region of $\beta _{1}$ by $A_{1},\ldots ,A_{m-3}$ and the elementary domains in the winding region of $\beta _{2}$ by $B_{1},\ldots ,B_{n-3}$. The remaining elementary domains of the Heegaard diagram are denoted by $D_{1},\ldots ,D_{16}$. They consist of five hexagons $D_{1},D_{4},D_{8},D_{9}$ and $D_{16}$, one dodecagon $D_{5}$ and one bigon $D_{13}$; all the remaining elementary domains are rectangles. We put the basepoint $z$ of the Heegaard diagram into the elementary domain $D_{5}$. There is a single periodic domain in our diagram, bounded by the difference $\alpha _{3}-\beta _{3}$ of the two homologous curves, which is given by the sum $$\mathcal {P}=D_{3}+D_{4}+D_{6}+D_{9}+D_{11}+D_{12}-D_{13}+D_{14}+D_{16}+B_{1}+B_{2}+\ldots +B_{n-3}\;.$$
Applying the first Chern class formula \cite[Proposition 7.5]{OS2} we obtain 
\begin{align*}
\left \langle c_{1}(\mathfrak{s}_{z}(\mathbf{x})),\mathcal {P}\right \rangle & =\chi (\mathcal{P})+2\sum _{x_{i}\in \mathbf{x}}\overline{n}_{x_{i}}(\mathcal{P})=3(1-\frac {6}{4})+(-1)(1-\frac {2}{4})+2\sum _{x_{i}\in \mathbf{x}}\overline{n}_{x_{i}}(\mathcal{P})=\\
&=2\left (\sum _{x_{i}\in \mathbf{x}}\overline{n}_{x_{i}}(\mathcal{P})-1\right )\;,
\end{align*}
\newpage
\newgeometry{margin=0cm}
\thispagestyle{empty} 
\begin{figure}
\begin{center}
\labellist
\scriptsize \hair 2pt
\pinlabel \rotatebox{90}{$\alpha _{1}$} [t] at 365 316
\pinlabel \rotatebox{90}{$\alpha _{2}$} [b] at 290 470
\pinlabel \rotatebox{90}{$\alpha _{3}$} [t] at 338 606
\pinlabel \rotatebox{90}{$\beta _{1}$} at 125 400
\pinlabel \rotatebox{90}{$\beta _{2}$} at 170 450
\pinlabel \rotatebox{90}{$\beta _{3}$} at 216 440
\pinlabel \rotatebox{90}{$x_{1}$} [b] at 283 316
\pinlabel \rotatebox{90}{$x_{2}$} [b] at 300 316
\pinlabel \rotatebox{90}{$x_{m-2}$} [b] at 318 316
\pinlabel \rotatebox{90}{$x_{m-1}$} [b] at 333 316
\pinlabel \rotatebox{90}{$x_{m}$} [b] at 372 316
\pinlabel \rotatebox{90}{$y_{1}$} [b] at 350 316
\pinlabel \rotatebox{90}{$y_{2}$} [b] at 395 316
\pinlabel \rotatebox{90}{$u_{1}$} [b] at 385 316
\pinlabel \rotatebox{90}{$u_{2}$} [t] at 400 314
\pinlabel \rotatebox{90}{$d_{1}$} [b] at 312 470
\pinlabel \rotatebox{90}{$d_{2}$} [b] at 342 470
\pinlabel \rotatebox{90}{$f_{1}$} [b] at 362 470
\pinlabel \rotatebox{90}{$f_{2}$} [b] at 375 470
\pinlabel \rotatebox{90}{$e_{1}$} [b] at 326 470
\pinlabel \rotatebox{90}{$e_{2}$} [t] at 358 468
\pinlabel \rotatebox{90}{$a_{1}$} [b] at 340 610
\pinlabel \rotatebox{90}{$a_{2}$} [b] at 382 610
\pinlabel \rotatebox{90}{$c_{1}$} [b] at 366 610
\pinlabel \rotatebox{90}{$c_{2}$} [b] at 402 610
\pinlabel \rotatebox{90}{$b_{1}$} [b] at 280 610
\pinlabel \rotatebox{90}{$b_{2}$}  [b] at 296 610
\pinlabel \rotatebox{90}{$b_{n-1}$} [b] at 320 610
\pinlabel \rotatebox{90}{$b_{n}$} [b] at 354 610
\pinlabel \rotatebox{90}{$\mathcal{D}_{1}$} at 470 470 
\pinlabel \rotatebox{90}{$\mathcal{D}_{2}$} at 420 260
\pinlabel \rotatebox{90}{$\mathcal{D}_{2}$} at 296 725
\pinlabel \rotatebox{90}{$\mathcal{D}_{3}$} at 387 250
\pinlabel \rotatebox{90}{$\mathcal{D}_{3}$} at 296 711
\pinlabel \rotatebox{90}{$\mathcal{D}_{4}$} at 250 500
\pinlabel \rotatebox{90}{$\mathcal{D}_{4}$} at 296 690
\pinlabel \rotatebox{90}{$\mathcal{D}_{4}$} at 373 252
\pinlabel \rotatebox{90}{$\mathcal{D}_{5}$} at 230 424
\pinlabel \rotatebox{90}{$\mathcal{D}_{5}$} at 363 260
\pinlabel \rotatebox{90}{$\mathcal{D}_{5}$} at 374 424
\pinlabel \rotatebox{90}{$\mathcal{D}_{6}$} at 200 612
\pinlabel \rotatebox{90}{$\mathcal{D}_{7}$} at 345 270
\pinlabel \rotatebox{90}{$\mathcal{D}_{8}$} at 320 390
\pinlabel \rotatebox{90}{$\mathcal{D}_{9}$} at 315 530  
\pinlabel \rotatebox{90}{$\mathcal{D}_{10}$} at 352 380
\pinlabel \rotatebox{90}{$\mathcal{D}_{11}$} at 352 560 
\pinlabel \rotatebox{90}{$\mathcal{D}_{12}$} at 368 552
\pinlabel \rotatebox{90}{$\mathcal{D}_{13}$} at 372 460 
\pinlabel \rotatebox{90}{$\mathcal{D}_{14}$} at 430 470
\pinlabel \rotatebox{90}{$\mathcal{D}_{15}$} at 420 520 
\pinlabel \rotatebox{90}{$\mathcal{D}_{16}$} at 441 520
\pinlabel \rotatebox{90}{$\mathcal{A}_{1}$} at 294 335 
\pinlabel \rotatebox{90}{$\mathcal{B}_{1}$} at 290 628 
\pinlabel \rotatebox{60}{$\mathcal{A}_{m-3}$} at 320 295 
\pinlabel \rotatebox{70}{$\mathcal{B}_{n-3}$} at 320 596 
\pinlabel \rotatebox{90}{$\mu _{1}$} at 253 318 
\pinlabel \rotatebox{90}{$\mu _{2}$} at 252 606 
\pinlabel \rotatebox{90}{$\mu _{3}$} at 256 472 
\pinlabel \rotatebox{90}{$z$} at 270 443 
\endlabellist
\centering
\caption{The Heegaard diagram of $\y $}
\includegraphics[scale=1.0]{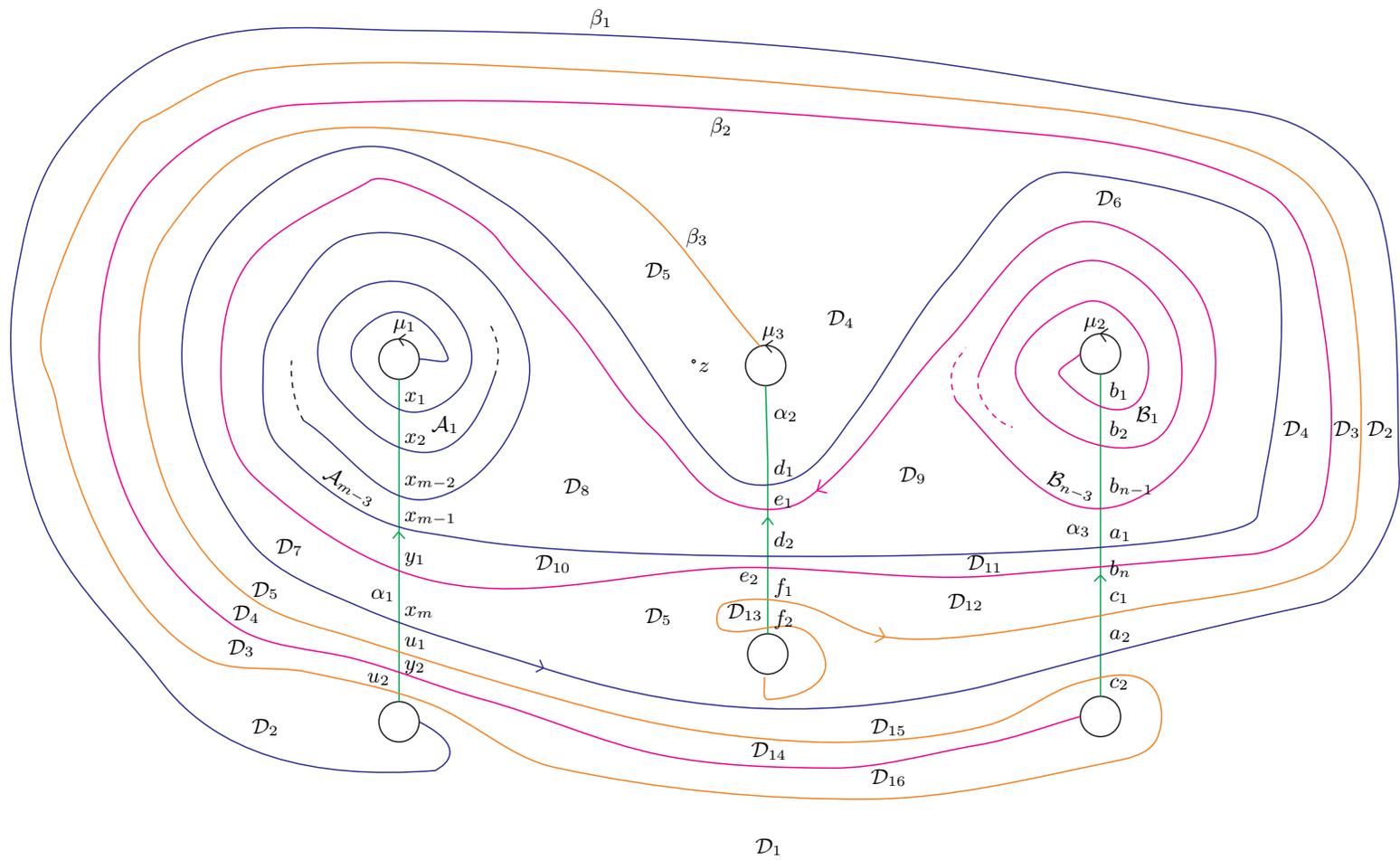}
\label{fig:heeg}
\end{center}
\end{figure}
\restoregeometry

\newpage
\thispagestyle{plain}
based on which we can determine the torsion $\sp $ structures. A generator $\mathbf{x}\in \TT _{\alpha }\cap \TT _{\beta }$ belongs to a torsion $\sp $ structure if and only if $\left \langle c_{1}(\mathfrak{s}_{z}(\mathbf{x})),\mathcal {P}\right \rangle =0$, which happens exactly when $\sum _{x_{i}\in \mathbf{x}}\overline{n}_{x_{i}}(\mathcal{P})=1$. Thus, the torsion $\sp $ structures of our chain complex contain the following generators:
$$\{\{x_{i},b_{j},f_{k}\},\{x_{i},c_{k},e_{r}\},\{y_{k},a_{k},f_{r}\},\{y_{1},c_{k},d_{r}\},\{u_{k},a_{2},e_{r}\}\}$$ for $k,r\in \{1,2\}$ and $i=1,\ldots ,m$ and $j=1,\ldots n$. There are $2(mn+2m+6)$ generators of the torsion $\sp $ structures on $\y $. 

\begin{subsubsection}{Notation for $\sp $ structures}
\label{notspin}
There is a one-to-one correspondence $$\delta ^{\tau }\colon \sp (\y )\to H^{2}(\y )$$ \cite[Subsection 2.6]{OS1}. Thus, we may identify the $\sp $ structures on $\y $ with co\-ho\-mo\-lo\-gy classes, or even with their Poincar\'{e} dual homology classes in $H_{1}(\y )$. Then the natural map $c_{1}\colon \sp (\y )\to H^{2}(\y )$ assigning to any $\sp $ structure its first Chern class is connected to $\delta ^{\tau }$ by $c_{1}(\mathfrak{s})=2\delta ^{\tau }(\mathfrak{s})$.  

Similarly, a $\sp $ structure on a 4--manifold $W$ has a determinant line bundle whose first Chern class is a characteristic element in $H^{2}(W)$. For every characteristic element $c\in H^{2}(W)$ there exists a $\sp $ structure on $W$ with determinant line bundle whose first Chern class is equal to $c$ \cite[Proposition 2.4.16]{kirby}. If $H^{2}(W)$ contains no 2-torsion, then such a $\sp $ structure is unique. If $W$ is a 4--manifold with boundary $\y $, we will identify the restriction $\sp (W)\to \sp (\y )$ with the corresponding restriction map on cohomology $H^{2}(W)\to H^{2}(\y )$ after making an appropriate choice of origins in the sets of $\sp $ structures. 

Let $s_{1},s_{2}\in H_{2}(\n )$ be the homology classes of the base spheres in the double plumbing. As we will see in Section \ref{App}, all the torsion $\sp $ structures on $\y $ extend to $\sp $ structures on $\n $. For two integers $i$ and $j$, denote by $\mathfrak{t}_{i,j}$ the unique $\sp $ structure on $\n $ for which 
\begin{align*}
\langle c_{1}(\mathfrak{t}_{i,j}),s_{1}\rangle +m=2i\\
\langle c_{1}(\mathfrak{t}_{i,j}),s_{2}\rangle +n=2j
\end{align*}   
and let $\mathfrak{s}_{i,j}=\mathfrak{t}_{i,j}|_{\y }$. 

\begin{corollary} 
\label{unique}
All the torsion $\sp $ structures on $\y $ are uniquely determined by
\begin{xalignat*}{1}
& \mathfrak{s}_{i,j} \textrm{ for } 1\leq i\leq m-1,\, 1\leq j\leq n-1\\
& \mathfrak{s}_{0,j} \textrm{ for } 0\leq j\leq n-2 \\
& \mathfrak{s}_{i,0} \textrm{ for } 0\leq i\leq m-2
\end{xalignat*} with identifications $\mathfrak{s}_{m-2,0}=\mathfrak{s}_{0,n-2}$ and $\mathfrak{s}_{m-1,1}=\mathfrak{s}_{1,n-1}$.
\end{corollary}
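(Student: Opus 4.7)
\smallskip
\noindent\emph{Proof plan.} The plan is to identify the torsion $\sp$ structures on $\y$ with a quotient of $\sp(\n) \cong \ZZ^{2}$ and then enumerate index pairs modulo that quotient.

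First I would invoke the forward reference to Section \ref{App}, which guarantees that every torsion $\sp$ structure on $\y$ extends to $\n$, so every such structure has the form $\mathfrak{s}_{i,j} = \mathfrak{t}_{i,j}|_{\y}$ for some $(i,j) \in \ZZ^{2}$. Since $\sp(\n)$ is an $H^{2}(\n)$-torsor, two extensions $\mathfrak{t}_{i,j}$ and $\mathfrak{t}_{i',j'}$ restrict to the same structure on $\y$ precisely when their difference in $H^{2}(\n)$ lies in the kernel of $H^{2}(\n) \to H^{2}(\y)$. The long exact sequence of the pair $(\n,\y)$ identifies this kernel with the image of $H^{2}(\n,\y) \to H^{2}(\n)$, which via Poincar\'e--Lefschetz duality is the image of the intersection form $Q = \left(\begin{smallmatrix} m & 2 \\ 2 & n \end{smallmatrix}\right)$. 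Using (\ref{eqsp}) and (\ref{eqsp1}), the difference $\mathfrak{t}_{i,j}-\mathfrak{t}_{i',j'}$ sends $s_{1} \mapsto i-i'$ and $s_{2} \mapsto j-j'$, so $\mathfrak{s}_{i,j}=\mathfrak{s}_{i',j'}$ iff $(i-i',j-j')$ lies in the lattice $L \subset \ZZ^{2}$ spanned by the columns $(m,2)$ and $(2,n)$ of $Q$.

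Next I would verify the two stated identifications by observing that both $(m-1,1)-(1,n-1)$ and $(m-2,0)-(0,n-2)$ equal $(m-2,2-n) = (m,2)-(2,n) \in L$. A cardinality count then confirms that no further identifications are missing: the listed index set contains $(m-1)(n-1)+(m-1)+(n-1) = mn-1$ labels, with the label $(0,0)$ appearing twice, giving $mn-2$ distinct labels; the two stated identifications leave $mn-4 = |\det Q| = |\ZZ^{2}/L|$, which equals the total number of torsion $\sp$ structures on $\y$ (this being, by Poincar\'e duality, the order of the torsion subgroup of $H_{1}(\y)$).

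The main obstacle is to rigorously rule out additional identifications. Writing $(i-i',j-j') = (pm+2q,\,2p+qn)$ with $p,q \in \ZZ$, the ranges of the listed set force $|pm+2q| \leq m-1$ and $|2p+qn| \leq n-1$. A short case analysis in $p$, exploiting $m,n \geq 4$, forces $(p,q) \in \{(0,0),\pm(1,-1)\}$; the nontrivial cases then give exactly the two listed identifications after imposing the boundary constraints that both $(i,j)$ and $(i',j')$ lie in the listed set. This final bookkeeping is routine once the bounds on $(p,q)$ are in place.
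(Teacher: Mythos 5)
Your proposal is correct and follows essentially the same route as the paper: both reduce the statement to enumerating the cosets of the lattice spanned by the columns of $Q_{\n}=\left(\begin{smallmatrix} m & 2\\ 2 & n\end{smallmatrix}\right)$ in $\ZZ ^{2}$, which is the torsion subgroup of $H_{1}(\y )$, via the restriction map $H^{2}(\n )\to H^{2}(\y )$. The only difference is one of detail: the paper simply asserts that the listed classes with the two identifications exhaust this quotient, whereas you supply the cardinality count and the case analysis on $(p,q)$ that justify it.
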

\begin{proof}
Denote by $s_{1}^{*},s_{2}^{*}$ the basis for $H^{2}(\n )$ which is Hom-dual to the basis $s_{1},s_{2}$. Then the restriction map $H^{2}(\n )\to H^{2}(\y )$ maps $s_{i}^{*}\mapsto PD(\mu _{i})$ for $i=1,2$. Remember the presentation of the first homology group $$H_{1}(\y )=\left \langle \mu _{1},\mu _{2},\mu _{3}|\, m\mu _{1}+2\mu _{2}=0,\,2\mu _{1}+n\mu _{2}=0\right \rangle \;.$$ Its torsion subgroup is uniquely determined by the classes $i\mu _{1}+j\mu _{2}$ for $1\leq i\leq m-1$ and $1\leq j\leq n-1$, $i\mu _{1}$ for $0\leq i\leq m-2$ and $j\mu _{2}$ for $0\leq j\leq n-2$ with identifications $(m-2)\mu _{1}=(n-2)\mu _{2}$ and $(m-1)\mu _{1}+\mu _{2}=\mu _{1}+(n-1)\mu _{2}$. 
\end{proof}

The set $\TT _{\alpha }\cap \TT _{\beta }$ is decomposed into equivalence classes according to the $\epsilon $-relation \cite[Definition 2.11]{OS1}. Furthermore, by the map $\mathfrak{s}_{z}\colon \TT _{\alpha }\cap \TT _{\beta }\to H^{2}(\y )$, every equivalence class of generators together with a fixed basepoint $z$ determines a $\sp $ structure and its corresponding cohomology class. By \cite[Lemma 2.19]{OS1} for any two generators $\mathbf{x},\mathbf{y}\in \TT _{\alpha }\cap \TT _{\beta }$ the following holds: $$\mathfrak{s}_{z}(\mathbf{y})-\mathfrak{s}_{z}(\mathbf{x})=PD[\epsilon (\mathbf{x},\mathbf{y})]\;.$$ 
\end{subsubsection}

From the Heegaard diagram we obtain

\begin{align*}
& \epsilon (\{x_{i+1},-\},\{x_{i},-\})=\mu _{1} \textrm { for $1\leq i\leq m-2$}\\
& \epsilon (\{x_{m},-\},\{x_{m-1},-\})=\epsilon (\{b_{n},-\},\{b_{n-1},-\})=\mu _{1}+\mu _{2}\\
& \epsilon (\{x_{m},-\},\{x_{1},-\})=(m-1)\mu _{1}+\mu _{2}=-\mu _{1}-\mu _{2}\\
& \epsilon (\{b_{i+1},-\},\{b_{i},-\})=\mu _{2}\textrm { for $1\leq i\leq n-2$}\\
& \epsilon (\{b_{n},-\},\{b_{1},-\})=(n-1)\mu _{2}+\mu _{1}=-\mu _{1}-\mu _{2}\\
& \epsilon (\{a_{2},-\},\{a_{1},-\})=\mu _{1}+\mu _{2}-\mu _{3}\\
& \epsilon (\{y_{2},-\},\{y_{1},-\})=\mu _{1}+\mu _{2}+\mu _{3}\\
& \epsilon (\{c_{2},-\},\{c_{1},-\})=\epsilon (\{e_{2},-\},\{e_{1},-\})=\mu _{1}\\
& \epsilon (\{d_{1},-\},\{d_{2},-\})=\epsilon (\{u_{2},-\},\{u_{1},-\})=\mu _{2}
\end{align*}

Suppose that $m,n\geq 4$. The equivalence classes of generators in the torsion $\sp $ structures, given by the $\epsilon $-relations above, are given below. For now, we will choose the origin for the $\sp $ structures arbitrarily and call it $\mathfrak{s}_{0}$. We will show later that $\mathfrak{s}_{0}$ is in fact the $\sp $ structure $\mathfrak{s}_{1,n-1}=\mathfrak{s}_{m-1,1}$ (see Lemma \ref{lemmasp}). 
\begin{xalignat*}{1}
& \mathfrak{s}_{0}+\mu _{1}+\mu _{2}\colon \, \{x_{m-1},c_{1},e_{2}\}\sim \{x_{m-1},c_{2},e_{1}\}\sim \{x_{m-2},c_{2},e_{2}\}\sim \{x_{m-1},b_{n},f_{1}\}\sim \\
& \sim \{x_{m-1},b_{n},f_{2}\}\sim \{x_{m},b_{n-1},f_{1}\}\sim \{x_{m},b_{n-1},f_{2}\}\sim \{y_{1},c_{1},d_{2}\}\sim \{y_{1},a_{1},f_{1}\}\sim \\
& \sim \{y_{1},a_{1},f_{2}\}\\
& \quad \\
& \mathfrak{s}_{0}+\mu _{1}\colon \, \{x_{m-2},b_{1},f_{1}\}\sim \{x_{m-2},b_{1},f_{2}\}\sim \{y_{1},c_{1},d_{1}\}\sim \{x_{m},c_{1},e_{1}\}\\
& \quad \\
& \mathfrak{s}_{0}+\mu _{2}\colon \, \{x_{m-1},c_{2},e_{2}\}\sim \{x_{1},b_{n-2},f_{1}\}\sim \{x_{1},b_{n-2},f_{2}\}\sim \{y_{1},c_{2},d_{2}\}\\
& \quad \\
&\mathfrak{s}_{0}\colon \, \{x_{m},c_{1},e_{2}\}\sim \{x_{m},c_{2},e_{1}\}\sim \{x_{m},b_{n},f_{1}\}\sim \{x_{m},b_{n},f_{2}\}\sim \{x_{m-1},b_{1},f_{1}\}\sim \\
& \sim \{x_{m-1},b_{1},f_{2}\}\sim \{x_{1},b_{n-1},f_{1}\}\sim \{x_{1},b_{n-1},f_{2}\}\sim \{y_{1},c_{2},d_{1}\}\sim \{u_{1},a_{2},e_{1}\}\\
& \quad \\
& \mathfrak{s}_{0}-\mu _{2}\colon \, \{x_{1},c_{1},e_{1}\}\sim \{x_{m-1},b_{2},f_{1}\}\sim \{x_{m-1},b_{2},f_{2}\}\sim \{u_{2},a_{2},e_{1}\}\\
& \quad \\
& \mathfrak{s}_{0}-\mu _{1}\colon \, \{x_{m},c_{2},e_{2}\}\sim \{x_{2},b_{n-1},f_{1}\}\sim \{x_{2},b_{n-1},f_{2}\}\sim \{u_{1},a_{2},e_{2}\}\\
& \quad \\
& \mathfrak{s}_{0}-\mu _{1}-\mu _{2}\colon \, \{x_{1},c_{1},e_{2}\}\sim \{x_{2},c_{1},e_{1}\}\sim \{x_{1},c_{2},e_{1}\}\sim \{x_{1},b_{n},f_{1}\}\sim \{x_{1},b_{n},f_{2}\}\\
& \sim \{x_{m},b_{1},f_{1}\}\sim \{x_{m},b_{1},f_{2}\}\sim \{y_{2},a_{2},f_{1}\}\sim \{y_{2},a_{2},f_{2}\}\sim \{u_{2},a_{2},e_{2}\}\\
& \quad \\
& \mathfrak{s}_{0}-i\mu _{1}-\mu _{2}\colon \, \{x_{i},b_{n},f_{1}\}\sim \{x_{i},b_{n},f_{2}\}\sim \{x_{i},c_{1},e_{2}\}\sim \{x_{i},c_{2},e_{1}\}\sim \\ & \sim \{x_{i+1},c_{1},e_{1}\}\sim \{x_{i-1},c_{2},e_{2}\}\quad \textrm{ for $2\leq i\leq m-2$}\\
& \quad  \\
& \textrm {other classes}\colon \, \{x_{r},b_{j},f_{1}\}\sim \{x_{r},b_{j},f_{2}\}
\end{xalignat*} 
for $(r,j)\in \{1,\ldots ,m\}\times \{1,\ldots ,n-1\}\backslash \\ \{(m,n-1),(m-2,1),(1,n-2),(m-1,1),(1,n-1),(m-1,2),(2,n-1),(m,1)\}$. 

We require $m,n\geq 4$ so that the $\sp $ equivalence classes listed above are all distinct. This is still true if $m\geq 4$ and $n=3$ or vice versa. By this requirement we also make sure that the intersection form of the double plumbing $\n $ is positive definite, which will be important in Section \ref{App}. 

Now we consider the differentials of the chain complex $CF^{+}(\y )$. For every torsion $\sp $ structure $\mathfrak{s}\in \sp (\y )$, we draw a schematic depicting the generators of $\widehat{CF}(\y ,\mathfrak{s})$ vertically according to their relative Maslov grading. Then we list the nonnegative domains of Whitney disks with Maslov index 1 between the generators. If between two generators there is only one such domain of a disk which has a unique holomorphic representative, we denote it by an arrow in the schematic. When neccessary, we also list some domains of Whitney disks with negative coefficients. We denote the action of $\Lambda ^{*}(H_{1}(\y ,\ZZ )/\operatorname{Tors})$ by a dotted arrow in each schematic. Using this information we calculate the homology $HF^{+}(\y ,\mathfrak{s})$. 

When the first Betti number of a 3--manifold $Y$ is at most 2, the homology $HF^{\infty }(Y,\mathfrak{s})$ in a torsion $\sp $ structure $\mathfrak{s}$ is determined by the integral homology of $Y$ \cite[Theorem 10.1]{OS2}. Since $H^{1}(\y ;\ZZ )\cong \ZZ $, it follows that for every torsion $\sp $ structure $\mathfrak{s}$, the homology $HF^{\infty }$ of our manifold is given by $$HF^{\infty }(\y ,\mathfrak{s})\cong \FF [U,U^{-1}]\otimes _{\ZZ }\Lambda ^{*}\FF $$ and has two $\FF [U,U^{-1}]$ summands.

\noindent \textbf{Classes with two generators}
\begin{displaymath}
\xymatrix{ \{x_{r},b_{j},f_{2}\} \ar@{.>}[d]\\
\{x_{r},b_{j},f_{1}\}}
\end{displaymath}
\begin{xalignat*}{1}
& \{-,-,f_{2}\}\to \{-,-,f_{1}\}\colon D_{13}\textrm{ and }D_{13}+\mathcal{P}\\
& \{-,-,f_{1}\}\to \{-,-,f_{2}\}\colon \phi =D_{1}+D_{2}+D_{5}+D_{7}+D_{8}+D_{10}+D_{13}+D_{15}+\\
& A_ {1}+A_{2}+\ldots + A_{m-3} \textrm{ and }\phi +\mathcal{P}
\end{xalignat*} 
As observed above, $HF^{\infty }(\y ,\mathfrak{s})$ has two $\FF [U,U^{-1}]$ summands, so the differential $\partial ^{\infty }$ in this class has to be trivial. This means there is an even number of holomorphic disks from $[\{x_{r},b_{j},f_{2}\},i]$ to $[\{x_{r},b_{j},f_{1}\},i]$. There is a disk from $\{x_{r},b_{j},f_{2}\}$ to $\{x_{r},b_{j},f_{1}\}$ with a bigonal domain $D_{13}$. By the Riemann mapping theorem, this disk has a unique holomorphic representative.  The domain of any disk $\phi $ from $\{x_{r},b_{j},f_{2}\}$ to $\{x_{r},b_{j},f_{1}\}$ is given by $\mathcal{D}(\phi )=D_{13}+a\mathcal{P}+b\Sigma $ for two integers $a$ and $b$. Such a disk has Maslov index $\mu (\phi )=1+2b$ and $n_{z}(\phi )=b$. It follows that the Maslov index equals 1 if and only if $n_{z}(\phi )=b=0$, which implies that the domain $D_{13}+a\mathcal{P}$ has only non-negative multiplicities when $a\in \{0,1\}$. Thus the domain of the se\-cond holomorphic disk from $\{x_{r},b_{j},f_{2}\}$ to $\{x_{r},b_{j},f_{1}\}$ is $D_{13}+\mathcal{P}$. This disk has an odd number of holomorphic representatives. The differential of the chain complex $CF^{+}(\y )$ in this class is trivial: $\partial ^{+}[\{x_{r},b_{j},f_{2}\},i]=\partial ^{+}[\{x_{r},b_{j},f_{1}\},i]=0$ and it follows that $$HF^{+}(\y ,\mathfrak{s})=\ta \oplus \ta $$ is freely generated by the elements $[\{x_{r},b_{j},f_{2}\},i]$ and $[\{x_{r},b_{j},f_{1}\},i]$ for $i\geq 0$. 

\noindent \textbf{Class $\mathfrak{s}_{0}+\mu _{1}$}
\begin{displaymath}
\xymatrix{ \{x_{m},c_{1},e_{1}\} \ar@{->}[d] & \quad \\
\{y_{1},c_{1},d_{1}\} & \{x_{m-2},b_{1},f_{2}\} \ar@{.>}[d]\\
\quad & \{x_{m-2},b_{1},f_{1}\}}
\end{displaymath}
\begin{xalignat*}{1}
& \{x_{m},c_{1},e_{1}\}\to \{x_{m-2},b_{1},f_{2}\}\colon D_{1}+D_{2}+D_{3}+D_{6}+D_{7}+D_{9}-D_{10}+D_{12}-\\
& -D_{13}+D_{16}+A_{1}+\ldots +A_{m-3}+B_{1}+\ldots +B_{n-3}\,,\textrm{ no holom. representatives}\\
& \{x_{m},c_{1},e_{1}\}\to \{y_{1},c_{1},d_{1}\}\colon D_{7}\, ,\textrm{ rectangle}\\
& \{y_{1},c_{1},d_{1}\} \to \{x_{m-2},b_{1},f_{1}\}\colon D_{1}+D_{2}+D_{3}+D_{6}+D_{9}-D_{10}+D_{12}+D_{16}+\\
& +A_{1}+\ldots +A_{m-3}+B_{1}+\ldots +B_{n-3}\, ,\textrm{ no holomorphic representatives}\\
& \{y_{1},c_{1},d_{1}\}\to \{x_{m},c_{1},e_{1}\}\colon D_{1}+D_{2}+D_{5}+D_{8}+D_{10}+2D_{13}+D_{15}+A_{1}+\\
& +\ldots +A_{m-3}\;, \phi +\mathcal{P}\textrm{ and }\phi +2\mathcal{P}\\
& \{-,-,f_{2}\}\to \{-,-,f_{1}\}\colon D_{13}\textrm{ and }D_{13}+\mathcal{P}\\
& \{x_{m-2},b_{1},f_{2}\}\to \{x_{m},c_{1},e_{1}\}\colon \phi =D_{4}+D_{5}+D_{8}+2D_{10}+D_{11}+2D_{13}+D_{14}+\\
& +D_{15}, \phi +\mathcal{P}\textrm{ and }\phi +2\mathcal{P}\\
& \{-,-,f_{1}\}\to \{-,-,f_{2}\}\colon \phi =D_{1}+D_{2}+D_{5}+D_{7}+D_{8}+D_{10}+D_{13}+D_{15}+\\
& +A_{1}+\ldots +A_{m-3}\textrm{ and }\phi +\mathcal{P}\\
& \{x_{m-2},b_{1},f_{1}\}\to \{y_{1},c_{1},d_{1}\}\colon \phi =D_{4}+D_{5}+D_{7}+D_{8}+2D_{10}+D_{11}+D_{13}+\\
& +D_{14}+D_{15}\textrm{ and }\phi +\mathcal{P}
\end{xalignat*}
There is a disk from $\{x_{m},c_{1},e_{1},\}$ to $\{y_{1},c_{1},d_{1}\}$ with a rectangular domain and a unique holomorphic representative. There is no disk from $\{x_{m},c_{1},e_{1}\}$ to $\{x_{m-2},b_{1},f_{2}\}$ whose domain would be non-negative, so these disks have no holomorphic representatives. Thus $\partial ^{+}[\{x_{m},c_{1},e_{1}\},i]=[\{y_{1},c_{1},d_{1}\},i]$ and  $\partial ^{+}[\{y_{1},c_{1},d_{1}\},i]=0$. Similarly we have $\partial ^{\infty }[\{x_{m},c_{1},e_{1}\},i]=[\{y_{1},c_{1},d_{1}\},i]$ and $\partial ^{\infty }[\{y_{1},c_{1},d_{1}\},i]=0$, so $HF^{\infty }(\y ,\mathfrak{s}_{0}+\mu _{1})$ is generated by $[\{x_{m-2},b_{1},f_{2}\},i]$ and $[\{x_{m-2},b_{1},f_{1}\},i]$. We already know there is an even number of holomorphic disks from $\{x_{m-2},b_{1},f_{2}\}$ to $\{x_{m-2},b_{1},f_{1}\}$, so $\partial ^{+}[\{x_{m-2},b_{1},f_{2}\},i]=\partial ^{+}[\{x_{m-2},b_{1},f_{1}\},i]=0$. It follows that $$HF^{+}(Y,\mathfrak{s}_{0}+\mu _{1})\cong \ta \oplus \ta $$ is freely generated by the elements $[\{x_{m-2},b_{1},f_{2}\},i]$ and $[\{x_{m-2},b_{1},f_{1}\},i]$ for $i\geq 0$. 

\noindent \textbf{Class $\mathfrak{s}_{0}+\mu _{2}$}
\begin{displaymath}
\xymatrix {\{x_{m-1},c_{2},e_{2}\} \ar@{->}[d] & \quad \\
\{y_{1},c_{2},d_{2}\} & \{x_{1},b_{n-2},f_{2}\} \ar@{.>}[d]\\
\quad & \{x_{1},b_{n-2},f_{1}\}}
\end{displaymath}
\begin{xalignat*}{1}
&  \{x_{m-1},c_{2},e_{2}\}\to \{x_{1},b_{n-2},f_{2}\}\colon -D_{2}-D_{6}-D_{7}+D_{10}+D_{11}+D_{12}-D_{13}+\\
& +D_{14}+D_{15}+D_{16}+B_{1}+\ldots +B_{n-3}\, ,\textrm{ no holomorphic representatives}\\
& \{x_{m-1},c_{2},e_{2}\}\to \{y_{1},c_{2},d_{2}\}\colon D_{10}\, ,\textrm{ rectangle}\\
& \{y_{1},c_{2},d_{2}\}\to \{x_{1},b_{n-2},f_{1}\}\colon -D_{2}-D_{6}-D_{7}+D_{11}+D_{12}+D_{14}+D_{15}+\\
& +D_{16}+B_{1}+\ldots +B_{n-3}\, ,\textrm{ no holomorphic representatives}\\
& \{y_{1},c_{2},d_{2}\}\to \{x_{m-1},c_{2},e_{2}\}\colon \phi =D_{1}+D_{2}+D_{5}+D_{7}+D_{8}+2D_{13}+D_{15}+\\
& +A_{1}+\ldots +A_{m-3}, \phi +\mathcal{P} \textrm{ and }\phi +2\mathcal{P}\\
& \{x_{1},b_{n-2},f_{2}\}\to \{x_{m-1},c_{2},e_{2}\}\colon \phi =D_{1}+2D_{2}+D_{3}+D_{4}+D_{5}+2D_{6}+2D_{7}+\\
& +D_{8}+D_{9}+2D_{13}+A_{1}+\ldots +A_{m-3}, \phi +\mathcal{P} \textrm{ and }\phi +2\mathcal{P}\\
& \{-,-,f_{2}\}\to \{-,-,f_{1}\}\colon D_{13}\textrm{ and }D_{13}+\mathcal{P}\\
& \{x_{1},b_{n-2},f_{1}\}\to \{y_{1},c_{2},d_{2}\}\colon \phi =D_{1}+2D_{2}+D_{3}+D_{4}+D_{5}+2D_{6}+2D_{7}+D_{8}+\\
& +D_{9}+D_{10}+D_{13}+A_{1}+\ldots +A_{m-3}\textrm{ and }\phi +\mathcal{P}\\
& \{x_{1},b_{n-2},f_{1}\}\to \{x_{1},b_{n-2},f_{2}\}\colon \phi =D_{1}+D_{2}+D_{5}+D_{7}+D_{8}+D_{10}+D_{13}+\\
& +D_{15}+A_{1}+\ldots +A_{m-3}\textrm{ and }\phi +\mathcal{P}
\end{xalignat*}
Using analogous reasoning as above gives the homology $HF^{+}(Y,\mathfrak{s}_{0}+\mu _{2})\cong \ta \oplus \ta $, freely generated by the elements $[\{x_{1},b_{n-2},f_{2}\},i]$ and $[\{x_{1},b_{n-2},f_{1}\},i]$ for $i\geq 0$. 

\noindent \textbf{Class $\mathfrak{s}_{0}-\mu _{2}$}
\begin{displaymath}
\xymatrix{ \{x_{1},c_{1},e_{1}\} \ar@{->}[d]& \quad \\
\{u_{2},a_{2},e_{1}\} & \{x_{m-1},b_{2},f_{2}\} \ar@{.>}[d]\\
\quad & \{x_{m-1},b_{2},f_{1}\}}
\end{displaymath}
\begin{xalignat*}{1}
& \{x_{1},c_{1},e_{1}\}\to \{u_{2},a_{2},e_{1}\}\colon D_{2}\, ,\textrm{ rectangle}\\
& \{x_{1},c_{1},e_{1}\}\to \{x_{m-1},b_{2},f_{2}\}\colon D_{2}+D_{3}+D_{6}+D_{7}+D_{9}-D_{10}+D_{12}-\\
& -D_{13}-D_{14}-D_{15}+B_{1}+\ldots +B_{n-3}\, ,\textrm{ no holomorphic representatives}\\
& \{u_{2},a_{2},e_{1}\}\to \{x_{1},c_{1},e_{1}\}\colon \phi =D_{1}+D_{5}+D_{7}+D_{8}+D_{10}+2D_{13}+D_{15}+\\
& +A_{1}+\ldots +A_{m-3}\;, \phi +\mathcal{P}\textrm{ and }\phi +2\mathcal{P}\\
& \{u_{2},a_{2},e_{1}\}\to \{x_{m-1},b_{2},f_{1}\}\colon D_{3}+D_{6}+D_{7}+D_{9}-D_{10}+D_{12}-D_{14}-\\
& -D_{15}+B_{1}+\ldots +B_{n-3}\, ,\textrm{ no holomorphic representatives}\\
& \{x_{m-1},b_{2},f_{2}\}\to \{x_{1},c_{1},e_{1}\}\colon \phi =D_{1}+D_{4}+D_{5}+D_{8}+2D_{10}+D_{11}+2D_{13}+\\
& +2D_{14}+2D_{15}+D_{16}+A_{1}+\ldots +A_{m-3}\;, \phi +\mathcal{P}\textrm{ and }\phi +2\mathcal{P}\\
& \{x_{m-1},b_{2},f_{2}\}\to \{x_{m-1},b_{2},f_{1}\}\colon D_{13}\textrm{ and }D_{13}+\mathcal{P}\\
& \{x_{m-1},b_{2},f_{1}\}\to \{u_{2},a_{2},e_{1}\}\colon \phi =D_{1}+D_{2}+D_{4}+D_{5}+D_{8}+2D_{10}+D_{11}+\\
& +D_{13}+2D_{14}+2D_{15}+D_{16}+A_{1}+\ldots +A_{m-3}\textrm{ and }\phi +\mathcal{P}\\
& \{x_{m-1},b_{2},f_{1}\}\to \{x_{m-1},b_{2},f_{2}\}\colon \phi =D_{1}+D_{2}+D_{5}+D_{7}+D_{8}+D_{10}+D_{13}+\\
& +D_{15}+A_{1}+\ldots +A_{m-3}\textrm{ and }\phi +\mathcal{P}
\end{xalignat*}
By an analogous reasoning as in the class $\mathfrak{s}_{0}+\mu _{1}$ we conclude that $HF^{+}(Y,\mathfrak{s}_{0}-\mu _{2})\cong \ta \oplus \ta $ is freely generated by the elements $[\{x_{m-1},b_{2},f_{2}\},i]$ and $[\{x_{m-1},b_{2},f_{1}\},i]$ for $i\geq 0$. 

\noindent \textbf{Class $\mathfrak{s}_{0}-\mu _{1}$}
\begin{displaymath}
\xymatrix{\{x_{m},c_{2},e_{2}\} \ar@{->}[d] & \quad \\
\{u_{1},a_{2},e_{2}\} & \{x_{2},b_{n-1},f_{2}\} \ar@{.>}[d]\\
\quad & \{x_{2},b_{n-1},f_{1}\}}
\end{displaymath}
\begin{xalignat*}{1}
& \{x_{m},c_{2},e_{2}\}\to \{u_{1},a_{2},e_{2}\}\colon D_{15}\, ,\textrm{ rectangle}\\
& \{x_{m},c_{2},e_{2}\}\to \{x_{2},b_{n-1},f_{2}\}\colon -D_{2}-D_{3}-D_{4}-D_{6}+D_{8}+D_{10}+D_{15}+A_{1}+\\
& +A_{2}+\ldots +A_{m-3}\, ,\textrm{ no holomorphic representatives}\\
& \{u_{1},a_{2},e_{2}\}\to \{x_{m},c_{2},e_{2}\}\colon \phi =D_{1}+D_{2}+D_{5}+D_{7}+D_{8}+D_{10}+2D_{13}+A_{1}+\\
& +A_{2}+\ldots +A_{m-3}\;, \phi +\mathcal{P}\textrm{ and }\phi +2\mathcal{P}\\
&  \{u_{1},a_{2},e_{2}\}\to \{x_{2},b_{n-1},f_{1}\}\colon -D_{2}-D_{3}-D_{4}-D_{6}+D_{8}+D_{10}+D_{13}+A_{1}+\\
& +A_{2}+\ldots +A_{m-3}\, ,\textrm{ no holomorphic representatives}\\
& \{x_{2},b_{n-1},f_{2}\}\to \{x_{m},c_{2},e_{2}\}\colon \phi =D_{1}+2D_{2}+D_{3}+D_{4}+D_{5}+D_{6}+D_{7}+2D_{13}\;,\\
& \phi +\mathcal{P}\textrm{ and }\phi +2\mathcal{P}\\
& \{x_{2},b_{n-1},f_{2}\}\to \{x_{2},b_{n-1},f_{1}\}\colon D_{13}\textrm{ and }D_{13}+\mathcal{P}\\
& \{x_{2},b_{n-1},f_{1}\}\to \{u_{1},a_{2},e_{2}\}\colon D_{1}+2D_{2}+2D_{3}+2D_{4}+D_{5}+2D_{6}+D_{7}+D_{9}+\\
& +D_{11}+D_{12}+D_{14}+D_{15}+D_{16}+B_{1}+\ldots +B_{n-3}\\
& \{x_{2},b_{n-1},f_{1}\}\to \{x_{2},b_{n-1},f_{2}\}\colon \phi =D_{1}+D_{2}+D_{5}+D_{7}+D_{8}+D_{10}+D_{13}+\\
& +D_{15}+A_{1}+\ldots +A_{m-3}\textrm{ and }\phi +\mathcal{P}
\end{xalignat*}
Again we use an analogous reasoning as in the class $\mathfrak{s}_{0}+\mu _{1}$ to obtain $$HF^{+}(Y,\mathfrak{s}_{0}-\mu _{1})\cong \ta \oplus \ta \;,$$ freely generated by the elements $[\{x_{2},b_{n-1},f_{2}\},i]$ and $[\{x_{2},b_{n-1},f_{1}\},i]$ for $i\geq 0$. 

In the following calculations, we apply the change of basepoint formula using \cite[Lemma 2.19]{OS1}: 
\begin{lemma}
\label{lemaz}
Let $(\Sigma ,(\alpha _{1},\ldots ,\alpha _{g}),(\beta _{1},\ldots ,\beta _{g}),z_{1})$ be a Heegaard diagram. Denote by $z_{2}\in \Sigma -\alpha _{1}-\ldots -\alpha _{g}-\beta _{1}-\ldots -\beta _{g}$ a new basepoint, for which the following holds: there is an arc $z_{t}$ from $z_{1}$ to $z_{2}$ on the surface $\Sigma $, which is disjoint from all curves $\beta _{i}$ and from all curves $\alpha _{i}$ appart from $\alpha _{j}$. Then for any generator $\mathbf{x}\in \TT _{\alpha }\cap \TT _{\beta }$ we have $$\mathfrak{s}_{z_{2}}(\mathbf{x})-\mathfrak{s}_{z_{1}}(\mathbf{x})=\alpha _{j}^{*}\;,$$ where $\alpha _{j}^{*}\in H^{2}(Y;\ZZ )$ is the Poincar\'{e} dual of the homology class in $\y $ induced by the curve $\gamma $ in $\Sigma $, for which $\alpha _{j}\cdot \gamma =1$ and whose intersection number with any other curve $\alpha _{i}$ for $j\neq i$ equals $0$. 
\end{lemma}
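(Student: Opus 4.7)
The plan is to reduce the assertion, via \cite[Lemma 2.19]{OS1}, to a statement depending only on the basepoints, and then to isolate the effect of a single curve-crossing. First, I would fix arbitrary generators $\mathbf{x}, \mathbf{y} \in \TT_\alpha \cap \TT_\beta$ and apply \cite[Lemma 2.19]{OS1} at each basepoint:
\[
\mathfrak{s}_{z_i}(\mathbf{y}) - \mathfrak{s}_{z_i}(\mathbf{x}) = PD[\epsilon(\mathbf{x}, \mathbf{y})], \qquad i = 1, 2.
\]
Since the right-hand side is built purely from $\mathbf{x}$ and $\mathbf{y}$ and does not involve $z$, subtracting the two equations yields $\mathfrak{s}_{z_2}(\mathbf{x}) - \mathfrak{s}_{z_1}(\mathbf{x}) = \mathfrak{s}_{z_2}(\mathbf{y}) - \mathfrak{s}_{z_1}(\mathbf{y})$, so the difference $\Delta(z_1,z_2) := \mathfrak{s}_{z_2}(\mathbf{x}) - \mathfrak{s}_{z_1}(\mathbf{x}) \in H^{2}(Y;\ZZ)$ depends only on the basepoints, not on the generator.

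Next I would exploit the fact that the Ozsv\'ath--Szab\'o vector-field construction of $\mathfrak{s}_z(\mathbf{x})$ varies continuously in $z$ away from the $\alpha$- and $\beta$-curves. This shows that $\Delta$ is locally constant in each argument and additive under concatenation of arcs. Subdividing $z_t$ into segments each crossing exactly one curve transversely once, the hypothesis that $z_t$ avoids all $\beta$-curves and all $\alpha$-curves except $\alpha_j$ reduces the computation to a single \emph{elementary contribution} coming from one transverse crossing of $\alpha_j$. Unwinding the vector-field definition, the two gradient flow lines from the index-$0$ to the index-$3$ critical point defined using $z_1$ and $z_2$, closed up by the arc $z_t$, form a loop in $Y$ whose intersection numbers with the ascending spheres $\alpha_i$ coincide with those defining the curve $\gamma$ in the statement; the obstruction to homotoping the two vector fields through nowhere-vanishing ones is Poincar\'e dual to this loop, which is precisely $\alpha_j^*$.

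The main obstacle is this last identification: extracting $\alpha_j^*$, with the correct sign, directly from the vector-field definition of $\mathfrak{s}_z(\mathbf{x})$ requires careful bookkeeping of the attaching data for the index-$1$ and index-$2$ handles. If that becomes too intricate, I would fall back on verifying the formula in a simple model case — for instance, the standard genus-$1$ splitting of $S^{1}\times S^{2}$, in which both sides can be computed explicitly — and then extend to the general situation by naturality under stabilisation and isotopy of the Heegaard diagram.
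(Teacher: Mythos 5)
Your argument is correct in substance, but note that the paper itself offers no proof of this lemma: it is presented as an instance of the change-of-basepoint formula and justified solely by the citation of \cite[Lemma 2.19]{OS1}, so what you have written is not a variant of the paper's argument but the standard justification that the paper omits. Your first step (using \cite[Lemma 2.19]{OS1} to show that $\mathfrak{s}_{z_2}(\mathbf{x})-\mathfrak{s}_{z_1}(\mathbf{x})$ is independent of the generator) is correct, and the second step is essentially the right picture: the two vector fields representing $\mathfrak{s}_{z_1}(\mathbf{x})$ and $\mathfrak{s}_{z_2}(\mathbf{x})$ agree outside a neighbourhood of the gradient flow lines $\gamma_{z_1},\gamma_{z_2}$ through the two basepoints, so their difference is Poincar\'e dual to the $1$-cycle $\gamma_{z_1}-\gamma_{z_2}$. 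One small correction: this is already a closed cycle (both flow lines run from the index-$0$ to the index-$3$ critical point), so no ``closing up by $z_t$'' is needed; rather, sweeping the flow lines along $z_t$ produces a $2$-chain exhibiting a homology from $\gamma_{z_1}-\gamma_{z_2}$ to the core circle of the $1$-handle attached along $\alpha_j$, which meets the ascending disk of that handle once and the remaining handles not at all, hence represents the class of $\gamma$ and gives $\alpha_j^*$. Two points worth tightening: the conclusion as stated requires the arc $z_t$ to cross $\alpha_j$ algebraically once (otherwise one obtains the corresponding multiple of $\alpha_j^*$), an assumption left implicit both by you and by the paper but satisfied in all of its applications; and your proposed fallback (verifying a model case and invoking naturality) would still require knowing that the elementary contribution of a single crossing is independent of the diagram, which is precisely the bookkeeping the direct flow-line argument already carries out, so the fallback does not genuinely simplify matters.
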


\noindent \textbf{Class $\mathfrak{s}_{0}+\mu _{1}+\mu _{2}$}\\
We change the basepoint $z_{1}\in D_{5}$ for a new basepoint $z_{2}\in D_{2}$. By Lemma \ref{lemaz} the $\sp $ structure $\mathfrak{s}_{0}+\mu _{1}=\mathfrak{s}_{z_{1}}(\{x_{m-2},b_{1},f_{1}\})$ changes to $\mathfrak{s}_{0}+\mu _{1}+\mu _{2}=\mathfrak{s}_{z_{2}}(\{x_{m-2},b_{1},f_{1}\})$. In the new $\sp $ structure we have the same generators as in the class $\mathfrak{s}_{0}+\mu _{1}$, but with a new relative grading, induced by the basepoint $z_{2}$:
\begin{displaymath}
\xymatrix{\quad & \{x_{m-2},b_{1},f_{2}\}\ar@{.>}[d]\\
\{x_{m},c_{1},e_{1}\} \ar@{->}[d] &  \{x_{m-2},b_{1},f_{1}\}\\
\{y_{1},c_{1},d_{1}\} & \quad }
\end{displaymath}
We already know from the class $\mathfrak{s}_{0}+\mu _{1}$ that the only nontrivial differential of $HF^{\infty }$ in this class is $\partial ^{\infty }[\{x_{m},c_{1},e_{1}\},i]=[\{y_{1},c_{1},d_{1}\},i]$. It follows that  $\partial ^{+}[\{x_{m},c_{1},e_{1}\},i]=[\{y_{1},c_{1},d_{1}\},i]$ and also $\partial ^{+}[\{x_{m-2},b_{1},f_{2}\},i]=\partial ^{+}[\{x_{m-2},b_{1},f_{1}\},i]=0$. The resulting homology $$HF^{+}(\y ,\mathfrak{s}_{0}+\mu _{1}+\mu _{2})=\ta \oplus \ta $$ is freely generated by the elements $[\{x_{m-2},b_{1},f_{2}\},i]$ and $[\{x_{m-2},b_{1},f_{1}\},i]$ for $i\geq 0$. 

\noindent \textbf{Class $\mathfrak{s}_{0}-\mu _{1}-\mu _{2}$}\\
We change the basepoint $z_{1}\in D_{5}$ for a new basepoint $z_{2}\in D_{2}$. By Lemma \ref{lemaz}, the $\sp $ structure $\mathfrak{s}_{0}-\mu _{1}-2\mu _{2}=\mathfrak{s}_{z_{1}}(\{x_{m},b_{2},f_{1}\})$ changes to $\mathfrak{s}_{0}-\mu _{1}-\mu _{2}=\mathfrak{s}_{z_{2}}(\{x_{m},b_{2},f_{1}\})$. In the new $\sp $ structure we have the same generators as in the $\sp $ structure $\mathfrak{s}_{0}-\mu _{1}-2\mu _{2}$, but the relative grading is now induced by the basepoint $z_{2}$: 
\begin{displaymath}
\xymatrix{
\{x_{m},b_{2},f_{2}\} \ar@{.>}[d]\\
\{x_{m},b_{2},f_{1}\}}
\end{displaymath}
We already know that in the classes containing only two generators, the differential $\partial ^{\infty }$ is trivial. Therefore the resulting homology is  $$HF^{+}(\y ,\mathfrak{s}_{0}-\mu _{1}-\mu _{2})=\ta \oplus \ta \;,$$ freely generated by the elements $[\{x_{m},b_{2},f_{2}\},i]$ and $[\{x_{m},b_{2},f_{1}\},i]$ for $i\geq 0$. 

\noindent \textbf{Classes $\mathfrak{s}_{0}-i\mu _{1}-\mu _{2}$ for $2\leq i\leq m-2$}\\
We change the basepoint $z_{1}\in D_{5}$ for a new basepoint $z_{2}\in D_{2}$. By Lemma \ref{lemaz}, the $\sp $ structure $\mathfrak{s}_{0}-i\mu _{1}-2\mu _{2}=\mathfrak{s}_{z_{1}}(\{x_{i-1},b_{1},f_{1}\})$ changes to $\mathfrak{s}_{0}-i\mu _{1}-\mu _{2}=\mathfrak{s}_{z_{2}}(\{x_{i-1},b_{1},f_{1}\})$. In the new $\sp $ structure we have the same generators as in the $\sp $ structure $\mathfrak{s}_{0}-i\mu _{1}-2\mu _{2}$, but the relative grading is now induced by the basepoint $z_{2}$: 
\begin{displaymath}
\xymatrix{
\{x_{i-1},b_{1},f_{2}\} \ar@{.>}[d]\\
\{x_{i-1},b_{1},f_{1}\}}
\end{displaymath}
We already know that in the classes containing only two generators, the differential $\partial ^{\infty }$ is trivial. Therefore the resulting homology is  $$HF^{+}(\y ,\mathfrak{s}_{0}-i\mu _{1}-\mu _{2})=\ta \oplus \ta \;,$$ freely generated by the elements $[\{x_{i-1},b_{1},f_{2}\},j]$ and $[\{x_{i-1},b_{1},f_{1}\},j]$ for $j\geq 0$. 

\noindent \textbf{Class $\mathfrak{s}_{0}$}\\
We change the basepoint $z_{1}\in D_{5}$ for a new basepoint $z_{2}\in D_{2}$. By Lemma \ref{lemaz}, the $\sp $ structure $\mathfrak{s}_{0}-\mu _{2}=\mathfrak{s}_{z_{1}}(\{x_{m-1},b_{2},f_{1}\})$ changes to $\mathfrak{s}_{0}=\mathfrak{s}_{z_{2}}(\{x_{m-1},b_{2},f_{1}\})$. In the new $\sp $ structure we have the same generators as in the $\sp $ structure $\mathfrak{s}_{0}-\mu _{2}$, but with a new relative grading, induced by the basepoint $z_{2}$:
\begin{displaymath}
\xymatrix{ \{u_{2},a_{2},e_{1}\} & \{x_{m-1},b_{2},f_{2}\} \ar@{.>}[d]\\
\{x_{1},c_{1},e_{1}\} \ar@{->}[u] & \{x_{m-1},b_{2},f_{1}\}}
\end{displaymath}
From our calculation in the $\sp $ structure $\mathfrak{s}_{0}-\mu _{2}$ we deduce that the only nontrivial differential of $CF^{\infty }$ in this class is $\partial ^{\infty }[\{x_{1},c_{1},e_{1}\},i]=[\{u_{2},a_{2},e_{1}\},i-1]$. In the complex $CF^{+}$ we have $\partial ^{+}[\{x_{1},c_{1},e_{1}\},i]=[\{u_{2},a_{2},e_{1}\},i-1]$ for $i\geq 1$ and $\partial ^{+}[\{x_{1},c_{1},e_{1}\},0]=0$. It follows that 
\begin{xalignat*}{1}
& HF^{+}(\y ,\mathfrak{s}_{0})\cong \ta \oplus \ta \oplus \FF [\{x_{1},c_{1},e_{1}\},0]\;,
\end{xalignat*} where the first two summands are freely generated by the elements $[\{x_{m-1},b_{2},f_{2}\},i]$ and $[\{x_{m-1},b_{2},f_{1}\},i]$ for $i\geq 0$. 
The homology group $HF^{\infty }(\y ,\mathfrak{s}_{0})$ however equals $$HF^{\infty }(\y ,\mathfrak{s}_{0})\cong \FF [U,U^{-1}]\oplus \FF [U,U^{-1}]\,.$$

We have thus calculated the Heegaard--Floer homology $HF^{+}(\y ,\mathfrak{s})$ for all torsion $\sp $ structures $\mathfrak{s}$ on the manifold $\y $. In the following Subsection, we calculate the absolute gradings of the generators and finish the proof of Theorem \ref{th1}.  

If $b_{1}(Y)>0$ then there is an action of the exterior algebra $\Lambda ^{*}(H_{1}(Y;\ZZ )/\operatorname{Tors})$ on the groups $HF^{\infty }(Y,\mathfrak{s})$ and $\widehat{HF}(Y,\mathfrak{s})$ for every torsion $\sp $ structure $\mathfrak{s}$ on $Y$ \cite[Proposition 4.17, Remark 4.20]{OS1}. Let $\gamma $ be a simple closed curve on the Heegaard surface $\Sigma $ in general position with respect to the $\alpha $ curves and let $[\gamma ]$ be its induced homology class in $H_{1}(Y,\ZZ )$. Then the action is given by $$A_{[\gamma ]}([\mathbf{x},i])=\sum _{\mathbf{y}}\sum _{\{\phi \in \pi _{2}(\mathbf{x},\mathbf{y})|\, \mu (\phi )=1\}}a(\gamma ,\phi )\cdot [\mathbf{y},i-n_{z}(\phi )]\;,$$ where $$a(\gamma ,\phi )=\# \{u\in \mathcal{M}(\phi )|\, u(1\times 0)\in (\gamma \times \textrm{Sym}^{g-1}(\Sigma ))\cap \mathbb{T}_{\alpha }\}\;.$$
The value $d_{b}(Y,\mathfrak{s})$ is the least grading of an element of $HF^{\infty }(Y,\mathfrak{s})$ that is in the kernel of the action of $\Lambda ^{*}(H_{1}(Y;\ZZ )/\operatorname{Tors})$ and whose image in $HF^{+}(Y,\mathfrak{s})$ is nonzero. 
 
In the case of $Y=\y $ and $\mathfrak{s}$ any torsion $\sp $ structure on $\y $, the image of $HF^{+}(\y ,\mathfrak{s})$ in $d\widehat{HF}(\y ,\mathfrak{s})$ is generated by two elements of the form $\{x_{r},b_{j},f_{2}\}$ and $\{x_{r},b_{j},f_{1}\}$. As we have shown in the beginning of this subsection, there are two homotopy classes of disks $\phi _{1}$ and $\phi _{2}$ from $\{x_{r},b_{j},f_{2}\}$ to $\{x_{r},b_{j},f_{1}\}$ (represented by the domains $D_{13}$ and $D_{13}+\mathcal{P}$) and they both have an odd number of holomorphic representatives. Thus, we have $\# \widehat{\mathcal{M}}(\phi _{1})=\# \widehat{\mathcal{M}}(\phi _{2})=1$. The group $H_{1}(\y ;\ZZ )/\operatorname{Tors}=\ZZ $ is generated by the simple closed curve $\mu _{3}$ on the Heegaard diagram (see Figure \ref{fig:heeg}), so $a(\gamma ,\phi _{1})=0$ and $a(\gamma ,\phi _{2})=1$. It follows that $$A_{[\gamma ]}([\{x_{r},b_{j},f_{2}\},i])=[\{x_{r},b_{j},f_{1}\},i]$$ and the action on $\{x_{r},b_{j},f_{1}\}$ is trivial. So $d_{b}(\y ,\mathfrak{s})$ is given as the absolute grading of the generator $\{x_{r},b_{j},f_{1}\}$ and $d_{t}(\y ,\mathfrak{s})$ is the absolute grading of the generator $\{x_{r},b_{j},f_{2}\}$. For the definitions of the bottom and top correction terms, see \cite[Definition 3.3]{LRS}. 

\end{subsection}

\begin{subsection}{Absolute gradings}
\label{absolute}

The absolute grading of the generators of $\widehat{HF}(\y )$ can be calculated using the cobordism $W$ from $\y $ to the simpler 3--manifold $-L(m,1)\# S^{1}\times S^{2}$ whose absolute grading is known. To construct the cobordism, we use a pointed Heegaard triple $(\Sigma ,\vec{\alpha },\vec{\beta },\vec{\gamma },z)$. Here the first two sets of the curves $\vec{\alpha },\vec{\beta }$ stay the same as before, so $Y_{\alpha ,\beta }=\y $. The curves $\gamma _{1}$ and $\gamma _{3}$ are parallel copies of the curves $\beta _{1}$ and $\beta _{3}$ respectively, and the curve $\gamma _{2}$ is homologous to the meridian $\mu _{2}$ (see Figure \ref{fig:triple}). This means $Y_{\beta ,\gamma }=\# ^{2}S^{1}\times S^{2}$ and $Y_{\alpha ,\gamma }=-L(m,1)\# S^{1}\times S^{2}$. 

Filling the second boundary component $\# ^{2}S^{1}\times S^{2}$ by $\# ^{2}S^{1}\times B^{3}$ we get the surgery cobordism $W$ from $\y $ to $ -L(m,1)\# S^{1}\times S^{2}$. The cobordism $W$ equipped with a $\sp $ structure $\mathfrak{s}$ induces a map $$F_{W,\mathfrak{s}}\colon \widehat{HF}(\y )\to \widehat{HF}(-L(m,1)\# S^{1}\times S^{2})\;.$$ 
Under this map, the absolute grading of a generator $\zeta \in \widehat{HF}(\y )$ is changed by \cite[Formula (4)]{OS4}: 
\begin{xalignat}{1}
\label{grading}
& \gr (F_{W,\mathfrak{s}}(\zeta ))-\gr (\zeta )=\frac{c_{1}(\mathfrak{s})^{2}-2\chi (W)-3\sigma (W)}{4}\;.
\end{xalignat}
\newpage
\newgeometry{margin=1cm}
\thispagestyle{empty} 
\begin{figure}[here]
\labellist
\scriptsize \hair 2pt
\pinlabel \rotatebox{90}{$\alpha _{1}$} at 365 295
\pinlabel \rotatebox{90}{$\alpha _{2}$} at 376 657
\pinlabel \rotatebox{90}{$\alpha _{3}$} at 350 484
\pinlabel \rotatebox{90}{$\beta _{1}$} at 488 265
\pinlabel \rotatebox{90}{$\beta _{2}$} at 324 520
\pinlabel \rotatebox{90}{$\beta _{3}$} at 183 402
\pinlabel \rotatebox{90}{$\gamma _{1}$} at 520 265
\pinlabel \rotatebox{90}{$\gamma _{2}$} at 336 585
\pinlabel \rotatebox{90}{$\gamma _{3}$} at 152 402
\pinlabel \rotatebox{90}{$x_{1}$} [b] at 288 286
\pinlabel \rotatebox{90}{$x_{2}$} [b] at 304 286
\pinlabel \rotatebox{90}{$x_{m-2}$}  [b] at 332 286
\pinlabel \rotatebox{90}{$x_{m-1}$} [t] at 355 283
\pinlabel \rotatebox{90}{$x_{m}$} [b] at 395 286
\pinlabel \rotatebox{90}{$t_{1}^{+}$} at 362 310
\pinlabel \rotatebox{90}{$t_{1}^{-}$} at 363 342
\pinlabel \rotatebox{90}{$t_{2}^{+}$} at 400 442
\pinlabel \rotatebox{90}{$t_{2}^{-}$} at 374 450
\pinlabel \rotatebox{90}{$f_{1}$} [b] at 370 475
\pinlabel \rotatebox{90}{$f_{2}$} [b] at 402 475
\pinlabel \rotatebox{90}{$f_{1}'$} [b] at 382 475
\pinlabel \rotatebox{90}{$f_{2}'$} [b] at 390 475
\pinlabel \rotatebox{90}{$s$} [b] at 335 652
\pinlabel \rotatebox{90}{$b_{1}$}  [b] at 280 652
\pinlabel \rotatebox{90}{$b_{2}$} [b] at 298 652
\pinlabel \rotatebox{90}{$b_{n-1}$} [b] at 324 652
\pinlabel \rotatebox{90}{$b_{n}$} [b] at 364 652
\pinlabel \rotatebox{90}{$r$} at 208 708
\pinlabel \rotatebox{90}{$\mathcal{D}_{1}$} at 500 324
\pinlabel \rotatebox{90}{$\mathcal{D}_{2}$} at 473 370
\pinlabel \rotatebox{90}{$\mathcal{D}_{3}$} at 457 360
\pinlabel \rotatebox{90}{$\mathcal{D}_{4}$} at 468 530
\pinlabel \rotatebox{90}{$\mathcal{D}_{5}$} at 456 460
\pinlabel \rotatebox{90}{$\mathcal{D}_{6}$} at 422 360
\pinlabel \rotatebox{90}{$\mathcal{D}_{7}$} at 430 432
\pinlabel \rotatebox{90}{$\mathcal{D}_{8}$} at 396 396
\pinlabel \rotatebox{90}{$\mathcal{D}_{8}$} at 204 414
\pinlabel \rotatebox{90}{$\mathcal{D}_{8}$} at 370 214
\pinlabel \rotatebox{90}{$\mathcal{D}_{9}$} at 370 360
\pinlabel \rotatebox{90}{$\mathcal{D}_{10}$} at 353 380
\pinlabel \rotatebox{90}{$\mathcal{D}_{11}$} at 352 576
\pinlabel \rotatebox{90}{$\mathcal{D}_{12}$} at 369 576
\pinlabel \rotatebox{90}{$\mathcal{D}_{13}$} at 381 576
\pinlabel \rotatebox{90}{$\mathcal{D}_{14}$} at 397 576
\pinlabel \rotatebox{90}{$\mathcal{D}_{15}$} at 324 380
\pinlabel \rotatebox{90}{$\mathcal{D}_{16}$} at 406 692
\pinlabel \rotatebox{90}{$\mathcal{D}_{16}$} at 490 214
\pinlabel \rotatebox{90}{$\mathcal{D}_{17}$} at 394 692
\pinlabel \rotatebox{90}{$\mathcal{D}_{17}$} at 456 214
\pinlabel \rotatebox{90}{$\mathcal{D}_{18}$} at 265 788
\pinlabel \rotatebox{90}{$\mathcal{D}_{18}$} at 429 200
\pinlabel \rotatebox{90}{$\mathcal{D}_{19}$} at 372 684
\pinlabel \rotatebox{90}{$\mathcal{D}_{19}$} at 411 191
\pinlabel \rotatebox{90}{$\mathcal{D}_{20}$} at 224 512
\pinlabel \rotatebox{90}{$\mathcal{D}_{20}$} at 396 194
\pinlabel \rotatebox{90}{$\mathcal{D}_{21}$} at 310 490
\pinlabel \rotatebox{90}{$\mathcal{D}_{22}$} at 175 624
\pinlabel \rotatebox{90}{$\mathcal{D}_{23}$} at 240 711
\pinlabel \rotatebox{90}{$\mathcal{D}_{24}$} at 216 684
\pinlabel \rotatebox{90}{$\mathcal{D}_{25}$} at 320 548
\pinlabel \rotatebox{90}{$\mathcal{D}_{26}$} at 260 399
\pinlabel \rotatebox{90}{$\mathcal{D}_{27}$} at 295 455
\pinlabel \rotatebox{90}{$\mathcal{D}_{28}$} at 200 455
\pinlabel \rotatebox{90}{$\mathcal{D}_{28}$} at 380 200
\pinlabel \rotatebox{90}{$\mathcal{D}_{29}$} at 386 445
\pinlabel \rotatebox{90}{$\mathcal{D}_{30}$} at 404 461
\pinlabel \rotatebox{90}{$\mathcal{D}_{31}$} at 390 467
\pinlabel \rotatebox{90}{$\mathcal{D}_{32}$} at 375 468
\pinlabel \rotatebox{120}{$\mathcal{A}_{1}$} at 282 303
\pinlabel \rotatebox{110}{$\mathcal{B}_{1}$} at 294 672
\pinlabel \rotatebox{90}{$\mathcal{D}_{33}$} at 350 307 
\pinlabel \rotatebox{90}{$\mathcal{D}_{34}$} at 365 324
\pinlabel \rotatebox{90}{$\mu _{1}$} at 246 280
\pinlabel \rotatebox{90}{$\mu _{2}$} at 240 648
\pinlabel \rotatebox{90}{$\mu _{3}$} at 275 490
\pinlabel \rotatebox{90}{$z$} at 250 440
\endlabellist
\centering
\begin{center}
\caption{The Pointed Heegaard triple}
\includegraphics[scale=0.90]{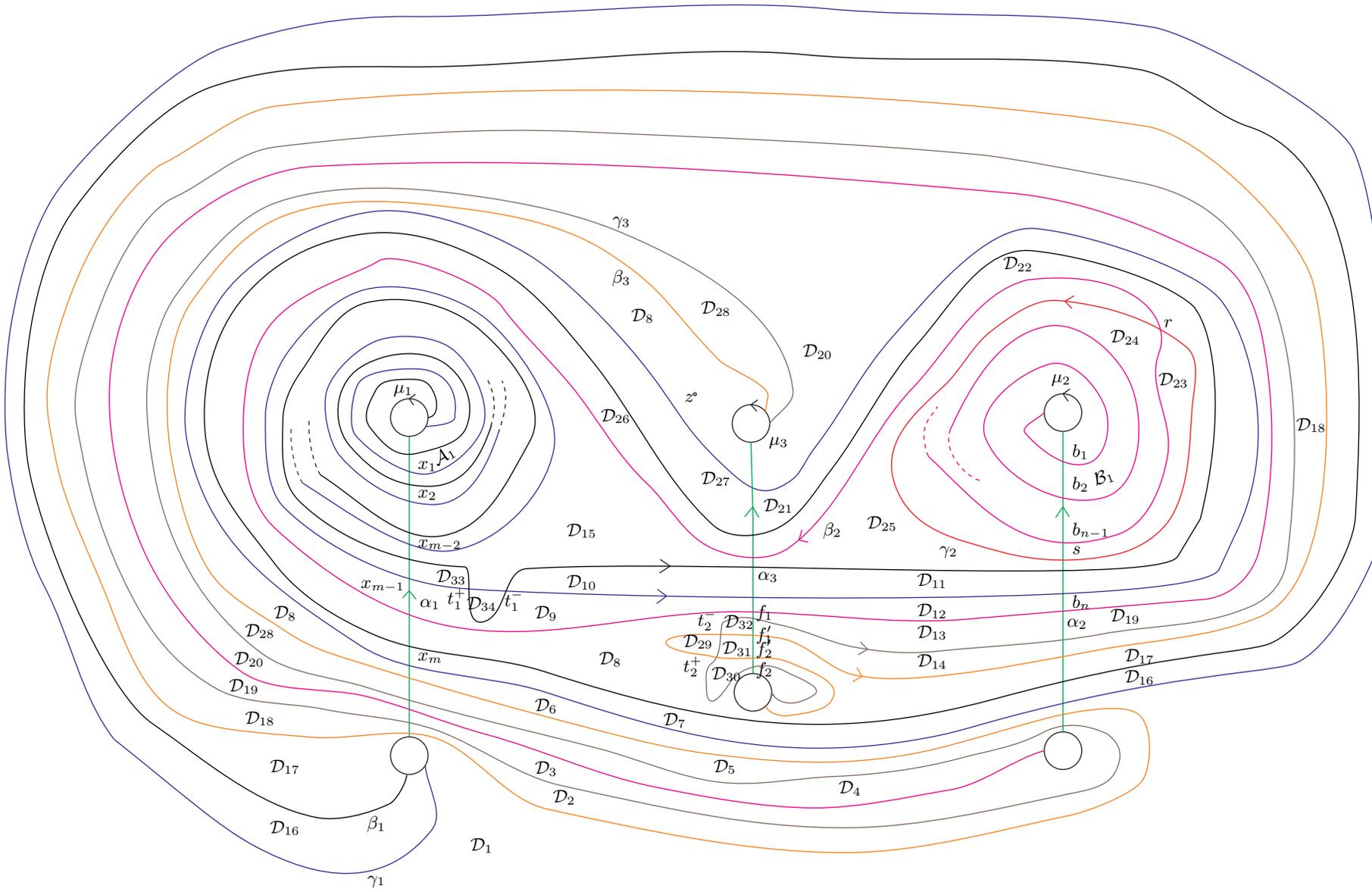}
\label{fig:triple}
\end{center}
\end{figure}
\restoregeometry

The intersections between the $\alpha $ and $\beta $ curves are denoted in the same way as before. New intersections between the $\alpha $, $\beta $ and $\gamma $ curves we will need are denoted by: 
$\alpha _{1}\cap \gamma _{1}=\{x_{1}',x_{2}',\ldots ,x_{m}'\}$, $\alpha _{2}\cap \gamma _{2}=\{s\}$, $\alpha _{3}\cap \gamma _{3}=\{f_{1}',f_{2}'\}$, $\beta _{1}\cap \gamma _{1}=\{t_{1}^{+},t_{1}^{-}\}$, $\beta _{2}\cap \gamma _{2}=\{r\}$, $\beta _{3}\cap \gamma _{3}=\{t_{2}^{+},t_{2}^{-}\}$ (see Figure \ref{fig:triple}). 

We express the $\alpha $, $\beta $ and $\gamma $ curves of the Heegaard triple in the standard basis of the surface $\Sigma $ as:
\begin{xalignat*}{1}
& \alpha _{i}\sim \lambda _{i}\textrm{ for }i=1,2,3\\
& \beta _{1}\sim \gamma _{1}\sim m\mu _{1}+2\mu _{2}-\lambda _{1}\\
& \beta _{2}\sim 2\mu _{1}+n\mu _{2}-\lambda _{2}\\
& \beta _{3}\sim \gamma _{3}\sim \lambda _{3}\\
& \gamma _{2}\sim \mu _{2}
\end{xalignat*} 
The elementary domains in the winding region of the curve $\beta _{1}$ are denoted by $A_{i}$ for $i=1,\ldots ,2m-5$,  the elementary domains in the winding region of the curve $\beta _{2}$ are denoted by $B_{j}$ for $j=1,\ldots ,n-3$ and the other elementary domains of the Heegaard triple are denoted by $D_{i}$ for $i=1,\ldots ,34$. There are four hexagons $D_{1},D_{3},D_{9}$ and $D_{20}$, three pentagons $D_{22}$, $D_{24}$ and $D_{25}$, five triangles $D_{10}$, $D_{23}$, $D_{30}$, $D_{32}$ and $D_{33}$, two bigons $D_{29}$ and $D_{34}$, one octagon $D_{15}$ and a domain $D_{8}$ with 14 sides. All the other elementary domains are rectangles. We put the basepoint into the elementary domain $D_{8}$, which corresponds to the basepoint $z\in D_{5}$ of the Heegaard diagram \ref{fig:heeg}. 

We have a triply-periodic domain
\begin{xalignat*}{1}
& \mathcal{Q}=(m-2)(D_{1}+D_{2}+D_{3})-2(D_{4}+D_{5}+D_{6}+D_{7})-m(D_{9}+D_{10}+D_{11}+D_{12})+\\
& +(2-m)D_{15}+(m-2)D_{16}+m(D_{17}+D_{18}+D_{19})+2D_{22}+(mn-2)D_{23}+\\
& +(m(n-1)-2)D_{24}+(2-m)D_{25}+2D_{26}-mD_{33}+(2-m)D_{34}+\\
& +\sum _{i=1}^{m-3}\left (m-2(i+1))(A_{2i-1}+A_{2i}\right )+\sum _{j=1}^{n-3}\left ((j+1)m-2\right )B_{j}
\end{xalignat*}
The orientation of the curves in the Heegaard triple is denoted on the diagram. The boundary of the triply-periodic domain is equal to $$\partial \mathcal{Q}=2\alpha _{1}+2\beta _{1}-m\alpha _ {2}-m\beta _{2}+(mn-4)\gamma _{2}\;.$$
We calculate the Euler measure of the triply-periodic domain \cite[Lemma 6.2]{OS5}: 
\begin{xalignat*}{1}
& \widehat{\chi}(\mathcal{Q})=2(m-2)(1-\frac{6}{4})-m(1-\frac{6}{4}+1-\frac{3}{4})+(2-m)(1-\frac{8}{4})+2(1-\frac{5}{4})+\\
& +(mn-2)(1-\frac{3}{4})+(m(n-1)-2)(1-\frac{5}{4})+(2-m)(1-\frac{5}{4})-m(1-\frac{3}{4})+\\
& +(2-m)(1-\frac{2}{4})=0\;.
\end{xalignat*}
We have $n_{z}(\mathcal{Q})=0$ and $\# (\partial \mathcal{Q})=m(n+2)$. The self-intersection number $\mathcal{H}(\mathcal{Q})^{2}$ is calculated by counting the intersections of $\alpha $ and $\beta $ curves in the boundary of the triply-periodic domain (according to the chosen orientation of the boundary). We get
$\alpha _{1}\cdot \beta _{1}=-m$, $\alpha _{1}\cdot \beta _{2}=-2$, $\alpha _{2}\cdot \beta _{1}=-2$ and $\alpha _{2}\cdot \beta _{2}=-n$, which gives us 
\begin{xalignat*}{1}
&\mathcal{H}(\mathcal{Q})^{2}=\partial _{\alpha }\mathcal{Q}\cdot \partial _{\beta }\mathcal {Q}=4\alpha _{1}\cdot \beta _{1}-2m\alpha _{1}\cdot \beta _{2}-2m\alpha _{2}\cdot \beta _{1}+m^{2}\alpha _{2}\cdot \beta _{2}=-m(mn-4)
\end{xalignat*}
Since the self-intersection number is negative for $mn-4>0$, the signature of the associated cobordism equals $\sigma (W)=-1$. $W$ is the surgery cobordism from $\y $ to $Y_{\alpha ,\gamma }=L(m,1)\#S^{1}\times S^{2}$, thus $\chi (W)=1$.  

Next we investigate the domains of Whitney triangles on the Heegaard surface. A Whitney triangle connecting $\mathbf{x}$, $\mathbf{y}$ and $\mathbf{w}$ is given by a map $u\colon \Delta \to \operatorname{Sym}^{g}\Sigma $ for which $u(v_{\gamma })=\mathbf{x}$, $u(v_{\alpha })=\mathbf{y}$, $u(v_{\beta })=\mathbf{w}$ and $u(e_{\alpha })\subset T_{\alpha }$, $u(e_{\beta })\subset T_{\beta }$ in $u(e_{\gamma })\subset T_{\gamma }$. The dual spider number of a triangle $u$ and a triply-periodic domain $\mathcal{Q}$ is defined in \cite{OS5} by $$\sigma (u,\mathcal{Q})=n_{u(x)}(\mathcal{Q})+\# (a\cap \partial _{\alpha }'\mathcal{Q})+\# (b\cap \partial _{\beta }'\mathcal{Q})+\# (c\cap \partial _{\gamma }'\mathcal{Q})\;,$$ where $x\in \Delta $ is a chosen point in general position and $a$, $b$, $c$ are chosen paths from $x$ to the respective edges $e_{0}$, $e_{1}$ and $e_{2}$ of the triangle $\Delta $. We show the following:

\begin{lemma} 
\label{triangle}
Let the basepoint of the Heegaard diagram \ref{fig:heeg} lie in the elementary domain $D_{5}$. For $1\leq i\leq m-1$ and $1\leq j\leq n-1$, there is a Whitney triangle $$u\colon \{x_{i},b_{j},f_{2}\}\to \{t_{1}^{+},r,t_{2}^{+}\}\to \{x_{i}',s,f_{2}'\}$$ with $\sigma (u,Q)=-mn+jm-2i$.
\end{lemma}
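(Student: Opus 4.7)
The plan is to exhibit an explicit domain for the Whitney triangle and then compute $\sigma(u,\mathcal{Q})$ directly from the definition given above. Since the three vertices $\{x_i,b_j,f_2\}$, $\{t_1^+,r,t_2^+\}$, $\{x_i',s,f_2'\}$ pair up by the three $\alpha$-curves (the points $x_i,t_1^+,x_i'$ all lie on $\alpha_1$; the points $b_j,r,s$ all lie on $\alpha_2$; the points $f_2,t_2^+,f_2'$ all lie on $\alpha_3$), the domain $\mathcal{D}(u)$ splits naturally into three pieces $T_1, T_2, T_3$, one for each $\alpha$-curve.

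First I would describe each piece. Since $\gamma_1$ is a parallel push-off of $\beta_1$ and $\gamma_3$ is a parallel push-off of $\beta_3$, the triangles $T_1$ and $T_3$ are thin strips between a $\beta$-curve and its $\gamma$-copy, bounded on the third side by an arc of $\alpha_1$ or $\alpha_3$. The curve $\gamma_2$, on the other hand, is homologous to the meridian $\mu_2$ and crosses $\alpha_2$ exactly at $s$, so $T_2$ is a triangle whose third vertex $r$ lies on a winding strand of $\beta_2$. Because $x_i$ sits $i$ places into the winding region of $\beta_1$ and $b_j$ sits $j$ places into the winding region of $\beta_2$, the pieces $T_1$ and $T_2$ necessarily sweep across elementary domains in those winding regions, picking up the $A_k$ and $B_k$ domains with multiplicities determined by $i$ and $j$; the piece $T_3$ is small and can be read off directly from Figure \ref{fig:triple}.

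Next I would compute $\sigma(u,\mathcal{Q})$ by breaking it into its four summands. For $n_{u(x)}(\mathcal{Q})$ I pick an interior point in each of $T_1,T_2,T_3$ and read off the coefficient of $\mathcal{Q}$ there from the explicit expression for $\mathcal{Q}$ given above; the winding coefficients $m-2(k+1)$ and $(k+1)m-2$ on the $A$ and $B$ domains are what produce the linear dependence on $i$ and $j$. The path contributions $\#(a\cap\partial'_\alpha\mathcal{Q})$, $\#(b\cap\partial'_\beta\mathcal{Q})$, $\#(c\cap\partial'_\gamma\mathcal{Q})$ amount to counting how many pushoffs of $\alpha_1,\alpha_2$, of $\beta_1,\beta_2$, and of $\gamma_2$ (these are the curves that appear with nonzero multiplicity in $\partial\mathcal{Q}=2\alpha_1+2\beta_1-m\alpha_2-m\beta_2+(mn-4)\gamma_2$) cross the three spider legs. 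Because $\gamma_2$ enters with coefficient $mn-4$, even a single intersection of the $c$-leg with $\gamma_2$ contributes a dominant term that combines with $n_{u(x)}(\mathcal{Q})$ to yield the constant $-mn$; the $\beta_1$ and $\beta_2$ terms contribute $-2i$ and $jm$ respectively.

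The main obstacle is the bookkeeping: the domain $\mathcal{D}(u)$ winds through the strips $A_1,\ldots,A_{2m-5}$ and $B_1,\ldots,B_{n-3}$, so signs and orientations must be tracked carefully, and one must verify that the various winding contributions to $n_{u(x)}(\mathcal{Q})$ and to $\#(b\cap\partial'_\beta\mathcal{Q})$ do not double-count the same coefficients of $\mathcal{Q}$. After this careful accounting, all contributions assemble into exactly $-mn + jm - 2i$.
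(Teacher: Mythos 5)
Your decomposition of the Whitney triangle into three components, one per handle, with the winding-region domains $A_k$ and $B_k$ absorbed into the first two pieces and the third piece being a single small triangle, is exactly the strategy of the paper's proof, which computes the dual spider number componentwise ($\sigma_1=m-2(i+1)$, $\sigma_2=2-mn+(j-1)m$, $\sigma_3=0$) and sums to get $-mn+jm-2i$. Two small caveats: the point $t_1^{+}\in\beta_1\cap\gamma_1$ does not lie on $\alpha_1$ (the components are grouped by which triple $(\alpha_k,\beta_k,\gamma_k)$ bounds them, not by a common $\alpha$-curve through all three vertices), and your outline stops short of the explicit domains, e.g.\ $D_{33}+A_{2m-5}+A_{2m-7}+\ldots+A_{2i-1}$ for the first component and $(n-j)D_{23}+(n-j-1)D_{24}+(n-j-2)B_{n-3}+\ldots+B_{j}$ for the second, which is where the actual content of the computation lives.
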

\begin{proof}
For a Whitney triangle $u\colon \Delta \to \operatorname{Sym}^{3}(\Sigma )$, the image $u(\Delta )$ is a triple branched cover over a triangle. In some cases this is a trivial disconnected cover consisting of three triangles $u_{1}$, $u_{2}$ and $u_{3}$ on the surface $\Sigma $. For $1\leq i\leq m-1$ and $1\leq j\leq n-1$ we can find a triangle with the following components.

The first component is a triangle between the points $x_{i},t_{1}^{+}$ and $x_{i}'$ (for $1\leq i\leq m-1$) with domain $D_{33}+A_{2m-5}+A_{2m-7}+\ldots +A_{2i-1}$ (see Figure \ref{fig:trikot1}). The dual spider number of this component is equal to $\sigma _{1}(u_{i},\mathcal{Q})=m-2(i+1)$. There is also a triangle between the points $x_{m},t_{1}^{+}$ and $x_{m}'$ with the dual spider number $\sigma _{1}(u,\mathcal{Q})=-2$.

The second component of the Whitney triangle (Figure \ref{fig:trikot3}) is a triangle between the points $b_{j}$, $r$ and $s$ (for $1\leq j\leq n-1$) with domain $$(n-j)D_{23}+(n-j-1)D_{24}+(n-j-2)B_{n-3}+(n-j-3)B_{n-4}+\ldots +B_{j}\;,$$ where all the coefficients of the domain have to be positive. The dual spider number of this component is equal to $\sigma _{2}(u_{j},\mathcal{Q})=2-mn+(j-1)m$.

The third component of the Whitney triangle is a triangle between the points $f_{2},t_{2}^{+}$ and $f_{2}'$ with domain $D_{30}$ (Figure \ref{fig:trikot2}). The dual spider number of this component is equal to $\sigma _{3}(u,\mathcal{Q})=0$.  

Combining the above we obtain $$\sigma (u,\mathcal{Q})=\sigma _{1}(u,\mathcal{Q})+\sigma _{2}(u,\mathcal{Q})+\sigma _{3}(u,\mathcal{Q})=-mn+jm-2i\;.$$ 
\end{proof}
\newpage
\thispagestyle{empty}
\newgeometry{margin=1cm}
\begin{figure}[here]
\labellist
\small \hair 2pt
\pinlabel $x_{m-1}$ [r] at 255 725
\pinlabel $\alpha _{1}$ [r] at 255 670
\pinlabel $\beta _{1}$ at 320 730
\pinlabel $\gamma _{1}$ at 320 630
\pinlabel $x_{m-1}'$ [r] at 255 625
\pinlabel $t_{1}'$ at 428 674
\pinlabel $\alpha _{1}$ [r] at 388 370
\pinlabel $\beta _{1}$ at 184 400
\pinlabel $\gamma _{1}$ at 95 216
\pinlabel $x_{m-2}$ [r] at 388 429
\pinlabel $x_{m-2}'$ [r] at 388 314
\pinlabel $t_{1}'$ [l] at 542 216
\pinlabel $-m$ at 432 213
\pinlabel $2-m$ at 132 460
\pinlabel $-m$ [l] at 265 673
\endlabellist
\begin{center}
\includegraphics[scale=0.38]{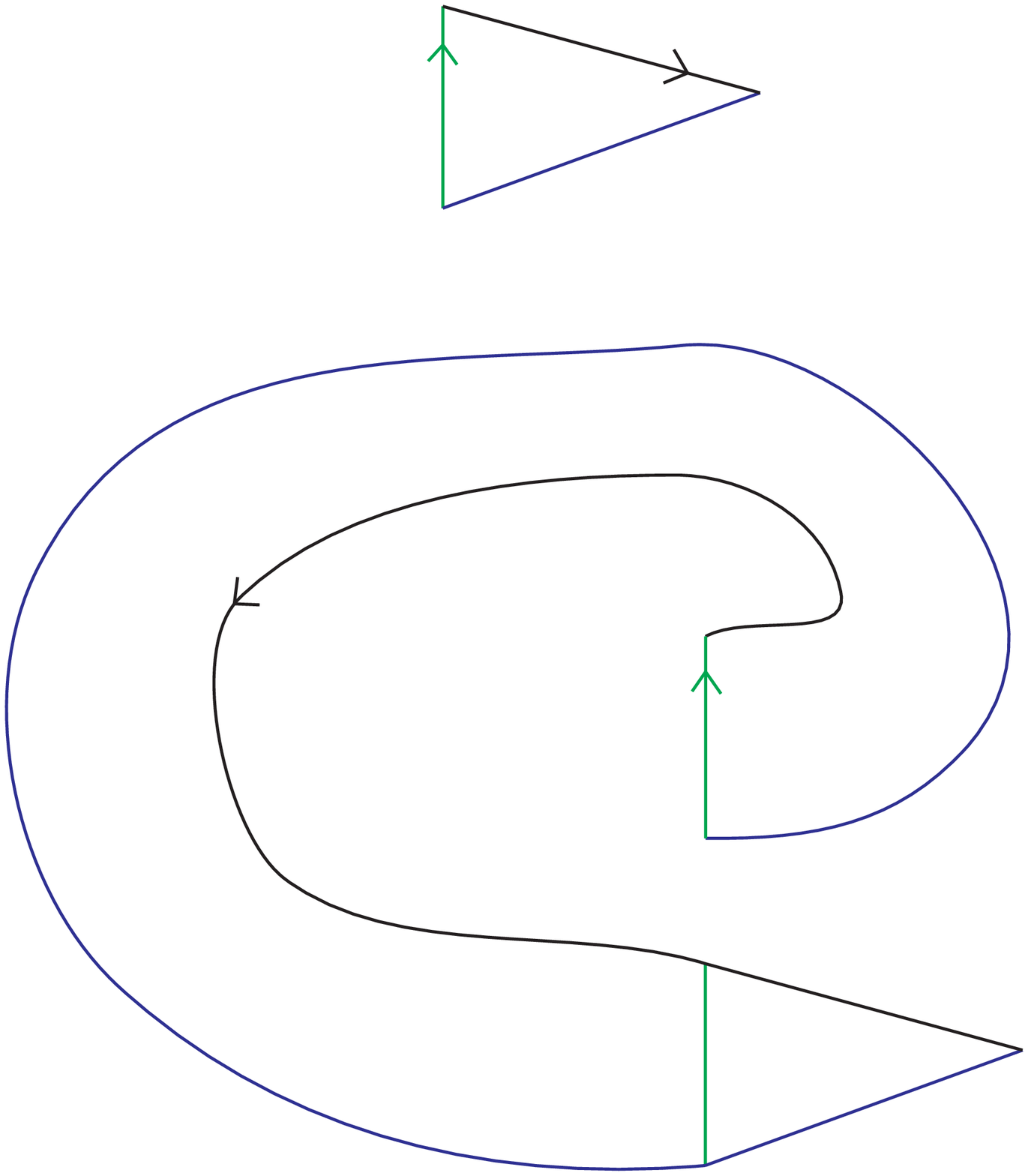}
\caption{The first component of a Whitney triangle; two versions}
\label{fig:trikot1}
\end{center}
\end{figure}
\begin{figure}[here]
\labellist
\small \hair 2pt
\pinlabel $mn-2$ [l] at 230 680
\pinlabel $m(n-1)-2$ at 210 270
\pinlabel $b_{n-1}$ [r] at 210 726
\pinlabel $r$ at 412 800
\pinlabel $\beta _{2}$ at 280 738
\pinlabel $\alpha _{2}$ [r] at 210 672
\pinlabel $\gamma _{2}$ at 390 648
\pinlabel $s$ [r] at 210 622
\pinlabel $\beta _{2}$ at 220 428
\pinlabel $b_{n-2}$ [r] at 324 368
\pinlabel $\alpha _{2}$ [l] at 336 338
\pinlabel $r$ at 536 390
\pinlabel $s$ [r] at 324 204
\pinlabel $\gamma _{2}$ at 440 218
\endlabellist
\begin{center}
\includegraphics[scale=0.38]{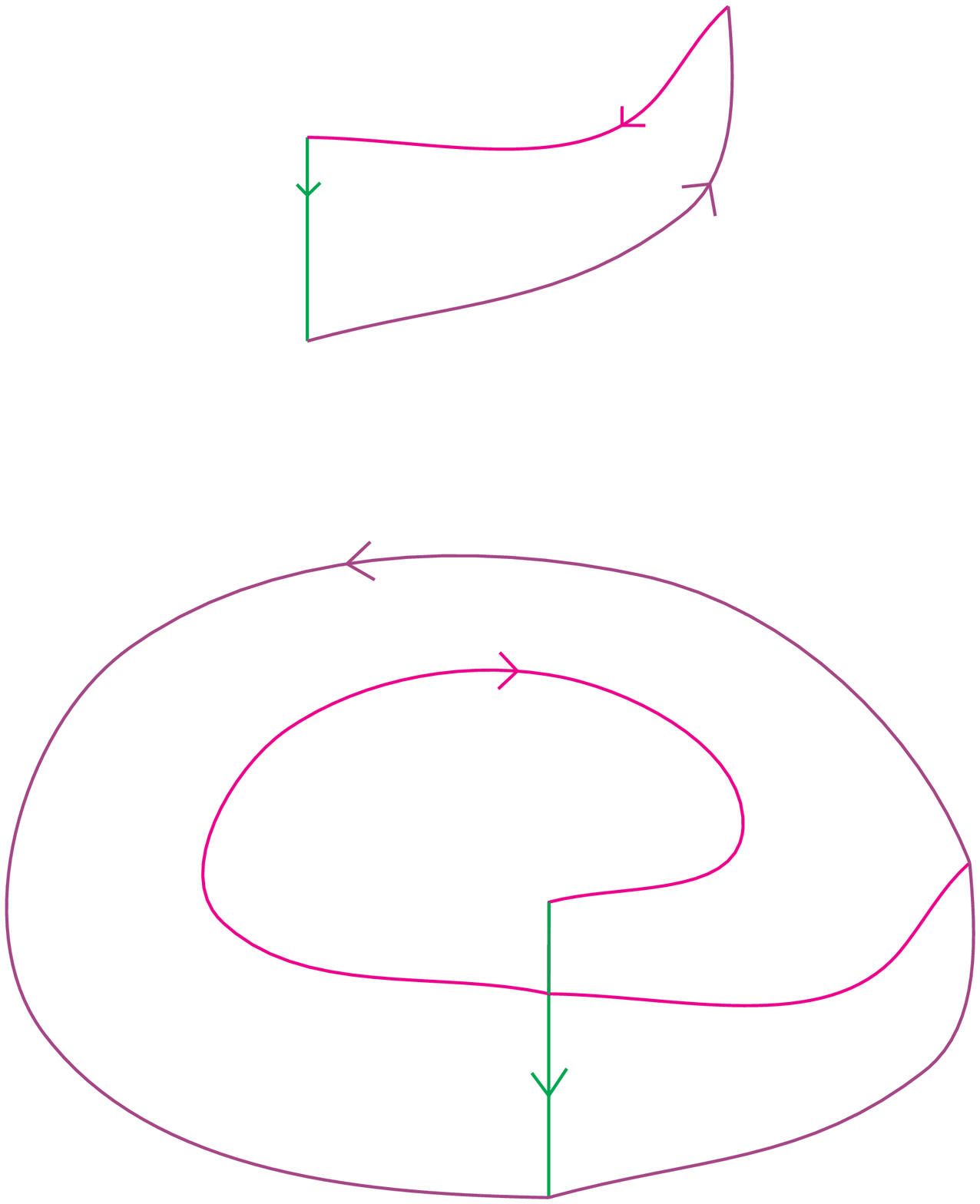}
\caption{The second component of a Whitney triangle; two versions}
\label{fig:trikot3}
\end{center}
\end{figure}
\restoregeometry

\begin{figure}[here]
\labellist
\small \hair 2pt
\pinlabel $\alpha _{3}$ [l] at 680 310
\pinlabel $\beta _{3}$ at 420 460
\pinlabel $\gamma _{3}$ at 400 280
\pinlabel $t_{2}^{+}$ at 160 425
\pinlabel $f_{2}$ [l] at 680 440
\pinlabel $f_{2}'$ [l] at 680 180
\pinlabel $0$ at 560 330
\endlabellist
\begin{center}
\includegraphics[scale=0.30]{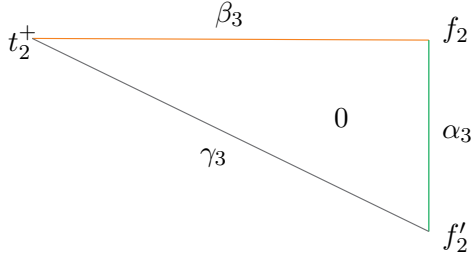}
\label{fig:trikot2}
\caption{The third component of a Whitney triangle}
\end{center}
\end{figure}

We are now prepared to compute the absolute gradings of the generators of $\widehat{HF}(\y )$. 
\begin{proposition}
\label{calc}
If the basepoint of the Heegaard diagram \ref{fig:heeg} lies in the elementary domain $D_{5}$, then the absolute grading of the generator $\{x_{i},b_{j},f_{2}\}$ is given by $$\gr (\{x_{i},b_{j},f_{2}\})=\frac{m^{2}n+mn^{2}-4mn(i+j+1)+4n(i^{2}+2i)+4m(j^{2}+2j)-16ij}{4(mn-4)}$$ for $1\leq i\leq m-1$ and $1\leq j\leq n-1$. 
\end{proposition}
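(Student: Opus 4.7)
The strategy is to apply the grading change formula (\ref{grading}) to the cobordism map $F_{W,\mathfrak{s}}$ induced by the surgery cobordism $W\colon \y \to -L(m,1)\#(S^{1}\times S^{2})$ constructed above, using the Whitney triangle of Lemma \ref{triangle} to identify the image of $\{x_{i},b_{j},f_{2}\}$ and to pin down the relevant $\sp$ structure on $W$. Since the absolute gradings on $\widehat{HF}(-L(m,1)\#(S^{1}\times S^{2}))$ are known from the $d$-invariants of lens spaces combined with the K\"unneth theorem (the $S^{1}\times S^{2}$ factor contributing two generators in gradings $\pm 1/2$), the problem reduces to computing $c_{1}(\mathfrak{s})^{2}$.

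First I would verify that the Whitney triangle $u$ of Lemma \ref{triangle} contributes non-trivially to $F_{W,\mathfrak{s}}(\{x_{i},b_{j},f_{2}\})$. Its domain decomposes into the three small polygons shown in Figures \ref{fig:trikot1}, \ref{fig:trikot3} and \ref{fig:trikot2}; each admits a unique holomorphic representative by the Riemann mapping theorem, so the total count is odd. After filling the $\beta\gamma$-boundary by handlebodies, the triangle counts compose with the top generator $\{t_{1}^{+},r,t_{2}^{+}\}$ of $\widehat{HF}(\#^{2}S^{1}\times S^{2})$ to give $F_{W,\mathfrak{s}}(\{x_{i},b_{j},f_{2}\}) = \{x_{i}',s,f_{2}'\}$. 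I would also rule out cancellation from other Whitney triangles in the same Maslov grading and $\sp$ equivalence class, using the fact that any such triangle differs from $u$ by a triply-periodic domain, shifting $n_{z}$ and hence the $U$-power of the image.

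Second, the first Chern class $c_{1}(\mathfrak{s}_{z}(u))$ is evaluated on $\mathcal{H}(\mathcal{Q})$ via the Whitney-triangle analogue of \cite[Proposition 7.5]{OS2}, which expresses $\langle c_{1}(\mathfrak{s}_{z}(u)),\mathcal{H}(\mathcal{Q})\rangle$ in terms of $\widehat{\chi}(\mathcal{Q})$, $\#(\partial\mathcal{Q})$, $n_{z}(\mathcal{Q})$ and the dual spider number $\sigma(u,\mathcal{Q})$ already computed in Lemma \ref{triangle}. Since $\mathcal{H}(\mathcal{Q})$ spans $H_{2}(W;\mathbb{Q})$ and $\mathcal{H}(\mathcal{Q})^{2}=-m(mn-4)$, the self-pairing
\begin{equation*}
c_{1}(\mathfrak{s})^{2}=\frac{\langle c_{1}(\mathfrak{s}),\mathcal{H}(\mathcal{Q})\rangle^{2}}{\mathcal{H}(\mathcal{Q})^{2}}
\end{equation*}
becomes an explicit rational function of $m,n,i,j$.

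Finally I would substitute $c_{1}(\mathfrak{s})^{2}$, $\chi(W)=1$, $\sigma(W)=-1$, and the absolute grading of $\{x_{i}',s,f_{2}'\}$ into (\ref{grading}); a routine algebraic simplification should produce the stated formula. The step I expect to be most delicate is the bookkeeping at the cobordism end: identifying which $\sp$ structure on $-L(m,1)$ corresponds to $\{x_{i}',s,f_{2}'\}$, reading off the appropriate $d$-invariant, and carrying the half-integer shift from the $S^{1}\times S^{2}$ factor through with the correct sign so that everything collapses to the single fraction written in the proposition.
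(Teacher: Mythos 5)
Your proposal follows essentially the same route as the paper: it uses the Whitney triangle of Lemma \ref{triangle} to identify the image of $\{x_{i},b_{j},f_{2}\}$ under the cobordism map, evaluates $c_{1}(\mathfrak{s}_{z}(u))$ on $\mathcal{H}(\mathcal{Q})$ via the dual spider number, squares it against $\mathcal{H}(\mathcal{Q})^{2}=-m(mn-4)$, and substitutes into the grading shift formula \eqref{grading} together with the known gradings $\frac{(2i-m)^{2}-m}{4m}\pm\frac{1}{2}$ on $-L(m,1)\#S^{1}\times S^{2}$ and $\chi(W)=1$, $\sigma(W)=-1$. The only difference is that you are somewhat more explicit than the paper about verifying that the triangle count is odd and that no other triangles cancel it, which is a reasonable addition rather than a deviation.
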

\begin{proof}
By Lemma \ref{triangle}, the generator $\{x_{i},b_{j},f_{2}\}$ is connected to a generator of $\widehat{HF}(-L(m,1)\# S^{1}\times S^{2})$ by a Whitney triangle $$u\colon \{x_{i},b_{j},f_{2}\}\to \{t_{1}^{+},r,t_{2}^{+}\}\to \{x_{i}',s,f_{2}'\}$$ with $\sigma (u,Q)=-mn+jm-2i$. Now we apply the grading shift formula \eqref{grading}. The absolute grading of the generators of $\widehat{HF}(-L(m,1)\# S^{1}\times S^{2})$ can be calculated from \cite[Proposition 4.8]{OS4}. The $i$-th torsion $\sp $ structure on $-L(m,1)\# S^{1}\times S^{2}$ contains two generators: $\{x_{i}',s,f_{2}'\}$ with absolute grading $$\gr (\{x_{i}',s,f_{2}'\})=\frac{(2i-m)^{2}-m}{4m}+\frac{1}{2}$$ and $\{x_{i}',s,f_{1}'\}$ with grading $$\gr (\{x_{i}',s,f_{1}'\})= \frac{(2i-m)^{2}-m}{4m}-\frac{1}{2}$$ where $i=1,\ldots ,m$. We calculate
\begin{xalignat}{1}
\label{c1}
& \left \langle c_{1}(\mathfrak{s}_{z}(u)),\mathcal{H}(\mathcal{Q})\right \rangle =m(n+2)+2\sigma (u,\mathcal{Q})=-mn+2(j+1)m-4i
\end{xalignat}
\begin{xalignat*}{1}
& c_{1}(\mathfrak{s}_{z}(u))^{2}=\frac{\left \langle c_{1}(\mathfrak{s}_{z}(u)),\mathcal{H}(\mathcal{Q})\right \rangle ^{2}}{-m(mn-4)}\\
& \gr (\{x_{i}',s,f_{2}'\})=\frac{(2i-m)^{2}-m}{4m}+\frac{1}{2}=\frac{1}{4}+\frac{(2i-m)^{2}}{4m}\\
& \gr (\{x_{i},b_{j},f_{2}\})=\gr (\{x_{i}',s,f_{2}'\})-\frac{c_{1}(\mathfrak{s}_{z}(u))^{2}-2\chi (W)-3\sigma (W)}{4}=\\
& =\frac{m^{2}n+mn^{2}-4mn(i+j+1)+4n(i^{2}+2i)+4m(j^{2}+2j)-16ij}{4(mn-4)}
\end{xalignat*}  
\end{proof}

Observe the symmetry $\gr (\{x_{m-i},b_{n-j},f_{2}\})=\gr (\{x_{i},b_{j},f_{2}\})$. The above formula calculates the absolute grading  $\gr (\{x_{i},b_{j},f_{2}\})$ for $1\leq i\leq m-1$ and $1\leq j\leq n-1$. 

To calculate the absolute grading of the generators $\{x_{m},b_{j},f_{2}\}$ and $\{x_{i},b_{n},f_{2}\}$ of $\widehat{HF}(\y )$, we use the method of Lee and Lipshitz \cite{LELI}. Their idea is as follows. If two generators $\mathbf{x},\mathbf{y}\in \widehat{HF}(Y)$ represent different torsion $\sp $ structures $\mathfrak{s}_{z}(\mathbf{x})$ and $\mathfrak{s}_{z}(\mathbf{y})$ on a 3-manifold $Y$, then there exists a covering projection $\pi \colon \widetilde{Y}\to Y$ such that $\pi ^{*}\mathfrak{s}_{z}(\mathbf{x})=\pi ^{*}\mathfrak{s}_{z}(\mathbf{y})$  on $\widetilde{Y}$. Thus, there exist lifts $\tilde {\mathbf{x}}$ of $\mathbf{x}$ and $\tilde {\mathbf{y}}$ of $\mathbf{y}$ whose relative grading difference is given by the domain bounded by a closed curve representing $\epsilon (\tilde {\mathbf{x}},\tilde{\mathbf{y}})$. The projection of this domain onto the Heegaard diagram for $Y$ is bounded by some multiple of a closed curve representing $\epsilon (\mathbf{x},\mathbf{y})$. We can reconstruct the relative grading difference between $\mathbf{x}$ and $\mathbf{y}$ from this projection, as described in \cite[Subsection 2.3]{LELI}. 
\begin{proposition}
\label{calc1}
If the basepoint of the Heegaard diagram \ref{fig:heeg} lies in the elementary domain $D_{5}$, then \begin{align}
\label{grmn}
& \gr \{x_{m},b_{j},f_{2}\}=\frac{m^{2}n+mn^{2}-4mnj+4mj^{2}-4m}{4(mn-4)}
\end{align}
\begin{align}
\label{grmn1}
& \gr \{x_{i},b_{n},f_{2}\}=\frac{m^{2}n+mn^{2}-4mni+4ni^{2}-4n}{4(mn-4)}\textrm{  and   }\gr \{x_{m},b_{n},f_{2}\}=\frac{m+n-4}{4} 
\end{align}
for $1\leq i\leq m-1$ and $1\leq j\leq n-1$.
\end{proposition}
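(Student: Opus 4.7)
The generators $\{x_m, b_j, f_2\}$ and $\{x_i, b_n, f_2\}$ lie in $\sp$ structures outside the range $1 \leq i \leq m-1$, $1 \leq j \leq n-1$ treated by Proposition \ref{calc}, so they are not tied to the cobordism $W$ by a Whitney triangle of the same form as in Lemma \ref{triangle}. My plan is to apply the Lee--Lipshitz lifting procedure recalled in the paragraph just above the statement.

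For a generator $\mathbf{x} = \{x_m, b_j, f_2\}$ with $1 \leq j \leq n-1$ I would pair it with the reference $\mathbf{y} = \{x_{m-1}, b_j, f_2\}$, whose grading $d(m-1, j)$ is supplied by Proposition \ref{calc}. Reading off the $\epsilon$-relations gives $\mathfrak{s}_z(\mathbf{x}) - \mathfrak{s}_z(\mathbf{y}) = \pm(\mu_1 + \mu_2)$, and the two homology relations $m\mu_1 + 2\mu_2 = 2\mu_1 + n\mu_2 = 0$ in $H_1(\y)$ combine to $(mn-4)(\mu_1 + \mu_2) = 0$, so this difference is a torsion class whose order $k$ divides $mn - 4$. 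I would then pass to the $k$-fold cyclic cover $\widetilde{\y}$ on which the two pulled-back $\sp$ structures coincide, choose lifts $\tilde{\mathbf{x}}$ and $\tilde{\mathbf{y}}$ in the same $\sp$ equivalence class, write down an explicit Whitney disk $\tilde\phi \in \pi_2(\tilde{\mathbf{x}}, \tilde{\mathbf{y}})$ on the covering Heegaard diagram, and compute its Maslov index and basepoint multiplicity to read off the relative grading $\mu(\tilde\phi) - 2n_z(\tilde\phi)$. Adding this to $d(m-1, j)$ and simplifying should give \eqref{grmn}.

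The Heegaard diagram of Figure \ref{fig:heeg} is visibly symmetric under swapping the two plumbed spheres, which exchanges $m \leftrightarrow n$, $\mu_1 \leftrightarrow \mu_2$, $\beta_1 \leftrightarrow \beta_2$ and the generator families $\{x_i\} \leftrightarrow \{b_j\}$, $\{a_r\} \leftrightarrow \{d_r\}$, $\{c_r\} \leftrightarrow \{e_r\}$, $\{u_r\} \leftrightarrow \{y_r\}$. Applying this symmetry to the previous step handles the family $\{x_i, b_n, f_2\}$ for $1 \leq i \leq m-1$, yielding the first part of \eqref{grmn1}. For the remaining corner $\{x_m, b_n, f_2\}$, the listing of equivalence classes already shows it lies in $\mathfrak{s}_0 = \mathfrak{s}_{m-1, 1}$ together with $\{x_{m-1}, b_1, f_2\}$, so the two are joined by an honest Whitney disk $\phi$ on $\y$ itself. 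Reading the domain $\mathcal{D}(\phi)$ off the diagram and evaluating $\mu(\phi) - 2n_z(\phi)$, combined with $\gr(\{x_{m-1}, b_1, f_2\}) = d(m-1, 1) = (m+n-4)/4$ (an identity already visible from the fact that $\mathfrak{s}_{m-1, 1} = \mathfrak{s}_{1, n-1}$), yields the stated value.

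The main obstacle I anticipate is in the first step: explicitly producing a positively supported domain on the $k$-fold cyclic cover whose projection winds the right number of times around a closed curve representing $\mu_1 + \mu_2$ and whose multiplicity at the lifted basepoint is controlled. Once the domain is written down, its Maslov index follows from an Euler measure plus boundary intersection count as in the proof of Proposition \ref{calc}, and the rest is routine algebraic simplification. A useful consistency check is that the formulas are exchanged by $(m, j) \leftrightarrow (n, i)$ and that \eqref{grmn} and the first part of \eqref{grmn1} agree with $d(m-1, 1) = d(1, n-1)$ at the appropriate boundary values.
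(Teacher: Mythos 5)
Your strategy is the right one---it is in fact the Lee--Lipshitz method the paper itself uses---but as written it stops exactly where the proof begins. The paper's proof never constructs the cyclic cover explicitly: it works directly on the base diagram of Figure \ref{fig:heeg}, exhibiting two explicit domains $S$ and $T$ with $\partial \partial _{\alpha }S=(mn-4)(b_{n-1}-b_{n})$ and $\partial \partial _{\alpha }T=(mn-4)(x_{m-1}-x_{m})$, and then evaluating the relative grading by the formula $\frac{1}{mn-4}\left (e(S)+n_{\mathbf{x}}(S)+n_{\mathbf{y}}(S)\right )$ (and similarly for $T$). Writing down these domains---with coefficients such as $(mn-m-n)$ on $D_{8}+D_{9}$ and $m+(i+1)n-2(i+2)$ on $A_{i}$---and computing their Euler measures and point multiplicities is the entire content of the proof. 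Your proposal explicitly defers precisely this step (``the main obstacle I anticipate''), so what you have is a correct plan rather than a proof. Your choice of reference generators $\{x_{m-1},b_{j},f_{2}\}$ and $\{x_{i},b_{n-1},f_{2}\}$ does match the paper's, and the consistency checks you propose (the $(m,j)\leftrightarrow (n,i)$ exchange, agreement at the boundary values) are sound.

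Two secondary points. First, the paper does not invoke the $m\leftrightarrow n$ symmetry; it derives both families from the two domains $S$ and $T$. If you want to use the symmetry you must verify that it preserves the \emph{pointed} diagram---in particular that it fixes the basepoint region $D_{5}$ and you must track whether it exchanges $f_{1}$ and $f_{2}$---since otherwise it only pins down the gradings up to the ambiguities those choices introduce. Second, your treatment of the corner case $\{x_{m},b_{n},f_{2}\}$ via a Whitney disk to $\{x_{m-1},b_{1},f_{2}\}$ inside $\mathfrak{s}_{0}$ is a legitimate alternative to the paper's route (which instead applies the domain $S$ once more, the multiplicities at $x_{m}$ producing the shift $\frac{4-m-n}{mn-4}$ from $\gr \{x_{m},b_{n-1},f_{2}\}$), but you must actually exhibit that disk and verify $\mu (\phi )-2n_{z}(\phi )=0$; the numerical coincidence $d(m-1,1)=\frac{m+n-4}{4}$ is what is to be proved, not evidence for it.
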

\begin{proof}
In the Heegaard diagram \ref{fig:heeg} we find a domain
\begin{align*}
& S=(m+n-4)(D_{1}+D_{16})+(m-2)(D_{2}+D_{3})+(n-2)(-D_{6}-D_{7}+D_{14}+D_{15})+\\
& +(mn-m-n)(D_{8}+D_{9})+(mn-m-2)(D_{10}+D_{11})+\\
& +\sum _{i=1}^{m-3}\left (m+(i+1)n-2(i+2)\right )A_{i}+\sum _{i=1}^{n-3}\left (n+(i+1)m-2(i+2)\right )B_{i}
\end{align*}
for which $\partial \partial _{\alpha }S=(mn-4)(b_{n-1}-b_{n})$. Thus we can compute
\begin{align*}
& \gr \{x_{i},b_{n},f_{k}\}-\gr \{x_{i},b_{n-1},f_{k}\}=\frac{1}{mn-4}\left (e(S)+n_{\{x_{i},b_{n-1},f_{k}\}}(S)+n_{\{x_{i},b_{n},f_{k}\}}(S)\right )=\\
& =\frac{(-mn+4)+(mn-m-n)+2(m+ni-2(i+1))}{mn-4}=\frac{m+(2i-1)n-4i}{mn-4}
\end{align*}
for $1\leq i\leq m-1$ and $\gr \{x_{m},b_{n},f_{k}\}-\gr \{x_{m},b_{n-1},f_{k}\}=\frac{4-m-n}{mn-4}$.

Similarly, the domain 
\begin{align*}
& T=(m+n-4)(D_{1}+D_{16})+(m-2)(D_{2}+D_{3}-D_{10}-D_{11})+\\
& +(mn-n-2)(D_{6}+D_{7})+(mn-m-n)(D_{8}+D_{9})+(n-2)(D_{14}+D_{15})+\\
& +\sum _{i=1}^{m-3}\left (m+(i+1)n-2(i+2)\right )A_{i}+\sum _{i=1}^{n-3}\left (n+(i+1)m-2(i+2)\right )B_{i}
\end{align*}
has $\partial \partial _{\alpha }T=(mn-4)(x_{m-1}-x_{m})$. A calculation gives us
\begin{align*}
& \gr \{x_{m},b_{j},f_{k}\}-\gr \{x_{m-1},b_{j},f_{k}\}=\frac{n+(2j-1)m-4j}{mn-4}
\end{align*}
for $1\leq j\leq n-1$. Combining this with Proposition \ref{calc}, we get formulas \eqref{grmn} and \eqref{grmn1}. 
\end{proof}

In some torsion $\sp $ structures on $\y $ we calculated the homology $HF^{+}(\y )$ by moving the basepoint $z$ into another elementary domain. In those $\sp $ structures we need to perform the calculation of the absolute gradings using the moved basepoint.

\begin{proposition}
\label{special}
Let the basepoint of the Heegaard diagram \ref{fig:heeg} lie in the elementary domain $D_{2}$. Then 
\begin{xalignat*}{1}
& \gr (\{x_{i},b_{j},f_{2}\})=\frac{m^{2}n+mn^{2}-4mn(i+j)+4n(i^{2}+2i)+4m(j^{2}-1)-16i(j-1)}{4(mn-4)}
\end{xalignat*} for $1\leq i\leq m-1$ and $1\leq j\leq n-1$, and 
\begin{xalignat*}{1}
& \gr (\{x_{m},b_{j},f_{2}\})=\frac{m(mn+n^{2}-4nj+4n+4j^{2}-8j)}{4(mn-4)}
\end{xalignat*} for $1\leq j\leq n-1$. 
\end{proposition}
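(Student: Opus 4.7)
The plan is to mimic the argument of Propositions \ref{calc} and \ref{calc1}, using the same Heegaard triple of Figure \ref{fig:triple}, the same cobordism $W$ from $\y $ to $-L(m,1)\# S^{1}\times S^{2}$, and the same Whitney triangle $u\colon \{x_i,b_j,f_2\}\to \{t_1^{+},r,t_2^{+}\}\to \{x_i',s,f_2'\}$ of Lemma \ref{triangle}, but with the basepoint $z$ relocated to $D_2$ rather than $D_5$.

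The guiding observation is that moving $z_1\in D_5$ to $z_2\in D_2$ shifts the assignment $\mathfrak{s}_z$ by $\mu_2$ uniformly on every generator (Lemma \ref{lemaz}, applied to an arc from $D_5$ to $D_2$ crossing the $\beta$-arc associated with $\mu_2$). Under the Spin$^c$ indexing $\mathfrak{s}_{i,j}$ this is a unit shift in the $j$-coordinate, so a priori the new grading formulas should be obtained from those of Propositions \ref{calc} and \ref{calc1} by the substitution $j\mapsto j-1$. A direct algebraic check confirms that $j\mapsto j-1$ in the formula of Proposition \ref{calc} produces the displayed expression for $1\le i\le m-1$, and that the same substitution in formula \eqref{grmn} of Proposition \ref{calc1} produces the displayed expression for $i=m$.

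To justify this rigorously I recompute the first Chern class pairing
\begin{displaymath}
\langle c_1(\mathfrak{s}_{z_2}(u)),\mathcal{H}(\mathcal{Q})\rangle =\#\partial\mathcal{Q}-2n_{z_2}(\mathcal{Q})+2\sigma(u,\mathcal{Q})
\end{displaymath}
at the new basepoint. Using the explicit expansion of $\mathcal{Q}$ from Section \ref{absolute} and careful bookkeeping of the dual spider number (which, together with $n_{z_2}(\mathcal{Q})$, is the only basepoint-sensitive term), the pairing should become $-mn+2jm-4i$, a decrease of exactly $2m$ compared with equation \eqref{c1}. Substituting this new value into the grading-shift formula \eqref{grading}, together with the grading of the terminal generator $\{x_i',s,f_2'\}$ in $\widehat{HF}(-L(m,1)\#S^{1}\times S^{2})$ (which, after accounting for the analogous Spin$^c$ reindexing of the lens-space side, still contributes as $\tfrac{1}{4}+\tfrac{(2i-m)^{2}}{4m}$), yields the claimed formula for $1\le i\le m-1$.

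For $\{x_m,b_j,f_2\}$ I use the auxiliary domain $T$ from the proof of Proposition \ref{calc1}: its identity $\partial\partial_\alpha T=(mn-4)(x_{m-1}-x_m)$ and the resulting grading difference $\gr\{x_m,b_j,f_k\}-\gr\{x_{m-1},b_j,f_k\}=(n+(2j-1)m-4j)/(mn-4)$ are read off purely from $e(T)$ and point multiplicities of $T$, none of which depend on the basepoint. Combining this identity with the $i=m-1$ case of the main formula produces the stated expression for $\{x_m,b_j,f_2\}$. The principal obstacle is precisely the Chern-class recomputation: pinning down the new multiplicity $n_{z_2}(\mathcal{Q})$ and the corresponding spider-number shift, and verifying that their combined effect is exactly the $-2m$ change needed to realise the $j\mapsto j-1$ substitution; once this is checked, everything else is direct algebraic manipulation.
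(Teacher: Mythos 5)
Your treatment of the range $1\leq i\leq m-1$ is essentially the paper's argument. The paper replaces $\mathcal{Q}$ by $\mathcal{Q}_{2}=\mathcal{Q}-m\Sigma $ so that $n_{z_{2}}(\mathcal{Q}_{2})=0$ (which changes $\widehat{\chi }$ to $4m$ and the spider number to $-mn+(j-3)m-2i$), whereas you keep $\mathcal{Q}$ and absorb the whole change into the term $-2n_{z_{2}}(\mathcal{Q})=-2m$ (the coefficient of $D_{17}$ in $\mathcal{Q}$ being $m$), the spider number itself being unchanged; both bookkeepings give $\langle c_{1}(\mathfrak{s}_{z_{2}}(u)),\mathcal{H}(\mathcal{Q})\rangle =-mn+2jm-4i$ and hence the stated formula. (Strictly one should pass to $\mathcal{Q}_{2}$, since a triply periodic domain is required to miss the basepoint, but this is cosmetic.)

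The genuine gap is in your derivation of the $i=m$ case. The Lee--Lipshitz relative grading extracted from the domain $T$ is \emph{not} basepoint-independent: as with the Maslov grading difference $\mu (\phi )-2n_{z}(\phi )$, the formula carries a correction $-2n_{z}(T)/(mn-4)$, which vanishes for $z\in D_{5}$ (since $D_{5}$ does not occur in $T$) but equals $-2(m-2)/(mn-4)$ for $z_{2}\in D_{2}$, because $D_{2}$ appears in $T$ with multiplicity $m-2$. If you combine the $D_{5}$-basepoint value $\gr \{x_{m},b_{j},f_{k}\}-\gr \{x_{m-1},b_{j},f_{k}\}=\frac{n+(2j-1)m-4j}{mn-4}$ with the new grading of $\{x_{m-1},b_{j},f_{2}\}$, you obtain an answer exceeding the stated formula by exactly $\frac{2(m-2)}{mn-4}$ --- which also contradicts your own $j\mapsto j-1$ heuristic, which does give the correct value. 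The paper avoids this by treating $i=m$ with the same Whitney-triangle method: the proof of Lemma \ref{triangle} also provides a triangle from $\{x_{m},b_{j},f_{2}\}$ to $\{x_{m}',s,f_{2}'\}$ (whose first component is the small triangle at $x_{m}$ with $\sigma _{1}(u,\mathcal{Q})=-2$), its spider number with respect to $\mathcal{Q}_{2}$ is $-mn+(j-4)m$, and the grading shift formula \eqref{grading} is then applied directly. Either redo $i=m$ that way, or keep the $T$-domain route but include the $-2n_{z_{2}}(T)$ term.
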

\begin{proof} When calculating $HF^{+}(\y )$ in the $\sp $ structures $\mathfrak{s}_{0}$, $\mathfrak{s}_{0}+\mu _{1}+\mu _{2}$,  $\mathfrak{s}_{0}-\mu _{1}-\mu _{2}$ and $\mathfrak{s}_{0}-i\mu _{1}-\mu _{2}$, we moved the basepoint $z\in D_{5}$ of the basic Heegaard diagram \ref{fig:heeg} over the curve $\alpha _{2}$ into the elementary domain $D_{2}$. Doing the same thing on the triple Heegaard diagram, the basepoint $z\in D_{8}$ moves to $z_{2}\in D_{17}$. The triply-periodic domain $\mathcal{Q}$ now changes to the triply periodic domain $\mathcal{Q}_{2}=\mathcal{Q}-m\Sigma $, for which we have $\partial \mathcal{Q}_{2}=\partial \mathcal{Q}$. As in the previous calculation, we obtain $\# \partial \mathcal{Q}_{2}=m(n+2)$, $n_{{z}_{2}}(\mathcal{Q}_{2})=0$ and $\mathcal{H}(\mathcal{Q}_{2})^{2}=-m(mn-4)$. The Euler measure of the new triply periodic domain is equal to $\widehat{\chi }(\mathcal{Q}_{2})=4m$. We can apply the same Whitney triangles as described in Lemma \ref{triangle}, but now their spider number changes due to the different multiplicities of the elementary domains in $\mathcal{Q}_{2}$. For $1\leq i\leq m-1$ and $1\leq j\leq n-1$, the Whitney triangle $$u\colon \{x_{i},b_{j},f_{2}\}\to \{t_{1}^{+},r,t_{2}^{+}\}\to \{x_{i}',s,f_{2}'\}$$ has the spider number
\begin{xalignat*}{1}
& \sigma (u,\mathcal{Q}_{2})=\sigma _{1}(u,\mathcal{Q}_{2})+\sigma _{2}(u,\mathcal{Q}_{2})+\sigma _{3}(u,\mathcal{Q}_{2})=-2(i+1)+2-(n-j+2)m-m=\\
& =-mn+(j-3)m-2i\;,
\end{xalignat*} while for $i=m$ we have $\sigma _(u,\mathcal{Q}_{2})=-mn+(j-4)m$. Since the basepoint of the triple Heegaard diagram was only moved over the curve $\alpha _{2}$ and not over $\alpha _{1}$, the torsion $\sp $ structures of $-L(m,1)\#S^{1}\times S^{2}$ (and their gradings) remain unchanged. We calculate 
\begin{xalignat*}{1}
& \left \langle c_{1}(\mathfrak{s}_{z}(u)),\mathcal{H}(\mathcal{Q}_{2})\right \rangle =m(n+2)+4m-2mn+2(j-3)m-4i=-mn+2mj-4i\\
& \gr (\{x_{i}',s,f_{2}'\})=\frac{(2i-m)^{2}-m}{4m}+\frac{1}{2}=\frac{1}{4}+\frac{(2i-m)^{2}}{4m}\\
& \gr (\{x_{i},b_{j},f_{2}\})=\gr (\{x_{i}',s,f_{2}'\})-\frac{c_{1}(\mathfrak{s}_{z}(u))^{2}-2\chi (W)-3\sigma (W)}{4}=\\
& =\frac{m^{2}n+mn^{2}-4mn(i+j)+4n(i^{2}+2i)+4m(j^{2}-1)-16i(j-1)}{4(mn-4)}
\end{xalignat*} for $1\leq i\leq m-1$ and $1\leq j\leq n-1$, and 
\begin{xalignat*}{1}
& \gr (\{x_{m},b_{j},f_{2}\})=\frac{m(mn+n^{2}-4nj+4n+4j^{2}-8j)}{4(mn-4)}
\end{xalignat*} for $1\leq j\leq n-1$. 
\end{proof}

\begin{corollary}
\label{specialc}
The absolute grading of the generator $\{x_{m-2},b_{1},f_{2}\}$ in the $\sp $ structure $\mathfrak{s}_{0}+\mu _{1}+\mu _{2}$ is given by 
\begin{align*}
& \gr (\{x_{m-2},b_{1},f_{2}\},z_{2})=\frac{mn(m+n-4)}{4(mn-4)}
\end{align*}
The absolute grading of the generator $\{x_{m-1},b_{2},f_{2}\}$ in the $\sp $ structure $\mathfrak{s}_{0}$ is given by 
\begin{align*}
\gr (\{x_{m-1},b_{2},f_{2}\},z_{2})=\frac{mn(m+n-4)-4(m+n)+16}{4(mn-4)}
\end{align*}
The absolute grading of the generator $\{x_{m},b_{2},f_{2}\}$ in the $\sp $ structure $\mathfrak{s}_{0}-\mu _{1}-\mu _{2}$ is given by 
\begin{align*} 
\gr (\{x_{m},b_{2},f_{2}\},z_{2})=\frac{mn(m+n-4)}{4(mn-4)}
\end{align*}
The absolute grading of the generator $\{x_{i-1},b_{1},f_{2}\}$ in the $\sp $ structure $\mathfrak{s}_{0}-i\mu _{1}-\mu _{2}$ is given by 
\begin{align*}
& \gr (\{x_{i-1},b_{1},f_{2}\},z_{2})=\frac{n(m^{2}+mn-4mi+4i^{2}-4)}{4(mn-4)}
\end{align*}
\end{corollary}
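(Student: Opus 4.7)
The plan is to derive all four identities as direct algebraic specializations of the two formulas already established in Proposition \ref{special}, which give $\gr(\{x_i,b_j,f_2\},z_2)$ for every admissible index pair when the basepoint lies in $D_2$. There is nothing new to set up: by the basepoint-moving convention fixed in the $HF^{+}$ computations of Subsection \ref{CF}, each of the four generators listed in the corollary represents precisely the $\sp$ structure appearing in the statement (this is exactly how Lemma \ref{lemaz} was applied earlier to pass from $\mathfrak{s}_{0}+\mu_1$ to $\mathfrak{s}_{0}+\mu_1+\mu_2$, etc.), so the task reduces to evaluating the closed-form expression of Proposition \ref{special} at specific indices.

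Concretely, I would proceed as follows. For the first identity, substitute $(i,j)=(m-2,1)$ into the formula for $\gr(\{x_i,b_j,f_2\},z_2)$; the $j^{2}-1$ and $16i(j-1)$ terms vanish, and after expanding $4n((m-2)^2+2(m-2)) = 4n(m^2-2m)$ the $4m^{2}n$ pieces cancel, leaving the numerator $mn(m+n-4)$. For the second identity, substitute $(i,j)=(m-1,2)$; then $4m(j^{2}-1)=12m$, $-16i(j-1)=-16(m-1)$, and $4n((m-1)^{2}+2(m-1))=4n(m^{2}-1)$ combines with $-4m^{2}n$ to produce $-4n$, yielding $mn(m+n-4)-4(m+n)+16$. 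For the third identity, use the second formula of Proposition \ref{special} (the $i=m$ case) and substitute $j=2$; the parenthesized expression collapses to $mn+n^{2}-4n$, which factors as $n(m+n-4)$ after multiplication by $m$. For the fourth identity, substitute $(i,j)=(i-1,1)$ into the first formula; the $4m$ and $-16i$ terms again drop out, and $4n((i-1)^{2}+2(i-1))=4n(i^{2}-1)$, so the numerator factors as $n(m^{2}+mn-4mi+4i^{2}-4)$.

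The only real work is bookkeeping during simplification: making sure the shifts $i \mapsto i-1$ and $j \mapsto 1$ or $2$ are applied consistently in all four terms of the numerator, and keeping track of sign cancellations among the $m^{2}n$, $mn^{2}$, $4mn(i+j)$, $4n(i^{2}+2i)$, $4m(j^{2}-1)$ and $16i(j-1)$ contributions. There is no new geometric input required, no new Whitney triangle to find, and no further use of the grading shift formula \eqref{grading} — those were all absorbed into Proposition \ref{special}. I expect the main (and only) obstacle to be verifying, by expanding each case, that the four specializations indeed simplify to the compact forms stated; this is mechanical but sign-sensitive, so I would carry out each case as an explicit two-line reduction rather than trying to argue uniformly.
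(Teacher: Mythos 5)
Your proposal is correct and matches the paper's own proof, which likewise just substitutes the relevant index pairs $(m-2,1)$, $(m-1,2)$, $(m,2)$ and $(i-1,1)$ into the two formulas of Proposition \ref{special} and simplifies. The algebraic reductions you sketch all check out.
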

\begin{proof}
We use the formulas from Proposition \ref{special} for the generators $\{x_{m-2},b_{1},f_{2}\}$,  $\{x_{m-1},b_{2},f_{2}\}$, $\{x_{m},b_{2},f_{2}\}$ and $\{x_{i-1},b_{1},f_{2}\}$ to obtain the desired gradings. 
\end{proof}

We have calculated the absolute gradings of the homology generators in the torsion $\sp $ structures on $\y $. Now we identify the $\sp $ structure corresponding to a given generator with a $\sp $ structure $\mathfrak{s}_{i,j}$, defined by \eqref{eqsp}-\eqref{eqsp1}. 
\begin{lemma}
\label{lemmasp} Let the basepoint of the Heegaard diagram \ref{fig:heeg} lie in the elementary domain $D_{5}$. Then 
$$\mathfrak{s}_{i,j}=\mathfrak{s}_{z}(\{x_{i},b_{j},f_{k}\})$$ for $1\leq i\leq m-1$ and $1\leq j\leq n-1$, where $k\in \{1,2\}$. 
\end{lemma}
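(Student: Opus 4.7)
The plan is to match $\mathfrak{s}_z(\{x_i,b_j,f_k\})$ with $\mathfrak{s}_{i,j}$ by propagating the identification over $(i,j)$ using the $\epsilon$-relations of Subsection \ref{CF}, and then pinning down a single base case through a Chern class computation on a cobordism built from the Heegaard triple.

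For the reduction step, I would use the $\epsilon$-relations together with $\mathfrak{s}_z(\mathbf{y})-\mathfrak{s}_z(\mathbf{x})=PD[\epsilon(\mathbf{x},\mathbf{y})]$ to express $\mathfrak{s}_z(\{x_{i+1},b_j,f_k\})-\mathfrak{s}_z(\{x_i,b_j,f_k\})$ and $\mathfrak{s}_z(\{x_i,b_{j+1},f_k\})-\mathfrak{s}_z(\{x_i,b_j,f_k\})$ as explicit multiples of $PD[\mu_1]$ and $PD[\mu_2]$. The defining equations \eqref{eqsp}--\eqref{eqsp1}, combined with the restriction $s_a^*\mapsto PD[\mu_a]$ from Corollary \ref{unique}, compute $\mathfrak{s}_{i+1,j}-\mathfrak{s}_{i,j}$ and $\mathfrak{s}_{i,j+1}-\mathfrak{s}_{i,j}$ in the same generators. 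These sets of differences agree (using the torsion relations $m\mu_1+2\mu_2=0$ and $2\mu_1+n\mu_2=0$ in $H_1(Y_{m,n})$ where necessary), and the index $k$ is irrelevant because $\{x_i,b_j,f_1\}$ and $\{x_i,b_j,f_2\}$ share a torsion equivalence class. Hence it suffices to verify the identification at a single pair $(i_0,j_0)$.

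For the base case I would establish $\mathfrak{s}_0=\mathfrak{s}_{m-1,1}$, equivalently $\mathfrak{s}_z(\{x_{m-1},b_1,f_k\})=\mathfrak{s}_{m-1,1}$, as foreshadowed in Subsection \ref{CF}. Gluing $N_{m,n}$ to the cobordism $W$ from the Heegaard triple of Subsection \ref{absolute} along $Y_{m,n}$ produces a 4-manifold $X_0$ with $\partial X_0=-L(m,1)\#S^1\times S^2$. For candidate indices $(i_0,j_0)$, the Spin$^c$ structures $\mathfrak{t}_{i_0,j_0}$ on $N_{m,n}$ and $\mathfrak{s}_z(u)$ on $W$, where $u$ is the Whitney triangle of Lemma \ref{triangle}, restrict to the same class on $Y_{m,n}$ under the hypothesized identification and so glue to a Spin$^c$ structure on $X_0$. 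Writing $\mathcal{H}(\mathcal{Q})\in H_2(X_0)$ as an explicit $\ZZ$-linear combination of $s_1$, $s_2$, and classes supported in $W$, and pairing with $c_1$, the value of $\langle c_1,\mathcal{H}(\mathcal{Q})\rangle$ provided by equation \eqref{c1} matched against the pairings prescribed by \eqref{eqsp}--\eqref{eqsp1} yields a linear equation that determines $(i_0,j_0)$.

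The main technical obstacle will be the homological computation of $\mathcal{H}(\mathcal{Q})$ as a class in $H_2(X_0)$: one must cap off the $\alpha$-, $\beta$-, and $\gamma$-portions of $\partial\mathcal{Q}$ using 2-handle cocores of $N_{m,n}$ on one side and handlebody disks of $W$ on the other, tracking orientations throughout. Once this combinatorial step is carried out, the identification of the base case, and hence the full lemma, follows by a direct matching of Chern class pairings on $X_0$.
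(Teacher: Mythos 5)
Your overall strategy (restrict to a cobordism and compare Chern--class evaluations) is in the right spirit, but your base case is underdetermined, and that is exactly where the content of the lemma lives. The class $\mathcal{H}(\mathcal{Q})$, viewed inside $H_{2}(\n )$, is the class $F=2s_{1}-ms_{2}$ orthogonal to $s_{1}$, and $H_{2}(W)$ modulo classes coming from the boundary is generated by $\mathcal{H}(\mathcal{Q})$ alone; so pairing $c_{1}$ of the glued $\sp $ structure against everything available in $W$ produces exactly one linear equation, $mn-2(j_{0}+1)m+4i_{0}=mn-4$, in the two unknowns $(i_{0},j_{0})$. That equation does not single out a torsion $\sp $ structure: for $m=8$, $n=5$ both $(i_{0},j_{0})=(7,1)$ and $(3,0)$ solve it, and $\mathfrak{s}_{7,1}\neq \mathfrak{s}_{3,0}$ by Corollary \ref{unique}. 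The paper supplies the missing second constraint from the other boundary component of $W$: the third vertex $\{x_{i}',s,f_{2}'\}$ of the Whitney triangle of Lemma \ref{triangle} identifies $\mathfrak{s}_{z}(u)|_{-L(m,1)\# S^{1}\times S^{2}}$ with the $i$-th $\sp $ structure in the labelling of \cite[Subsection 4.1]{OS4}, while $\mathfrak{t}_{i,j}$ restricts to the same $i$-th structure on the intermediate level of the surgery cobordism because $s_{1}$ is already present in the first $2$-handle and $\langle c_{1}(\mathfrak{t}_{i,j}),s_{1}\rangle =2i-m$. Agreement on that end pins down one index, and only then does the evaluation on $F$ pin down the other. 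This lens-space identification is an essential ingredient that cannot be recovered from $\langle c_{1},\mathcal{H}(\mathcal{Q})\rangle$ alone.

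Two remarks on your reduction step. First, it is unnecessary: Lemma \ref{triangle} provides a triangle for every $1\leq i\leq m-1$, $1\leq j\leq n-1$, so the paper runs the cobordism argument uniformly in $(i,j)$ with no induction and no distinguished base point. Second, if you do propagate, you must fix the sign of the restriction $H^{2}(\n )\to H^{2}(\y )$, $s_{a}^{*}\mapsto \pm PD[\mu _{a}]$, consistently with the convention $\mathfrak{s}_{z}(\mathbf{y})-\mathfrak{s}_{z}(\mathbf{x})=PD[\epsilon (\mathbf{x},\mathbf{y})]$; with the wrong sign the propagation yields $\mathfrak{s}_{z}(\{x_{i},b_{j},f_{k}\})=\mathfrak{s}_{m-i,n-j}$, the conjugate structure, and the symmetry $d(i,j)=d(m-i,n-j)$ means the gradings cannot detect the error. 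Nailing down that sign geometrically (orientation of the meridian $\mu _{1}$ against the cocore of the $2$-handle) is essentially the same amount of work as the direct argument, so the reduction buys you little.
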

\begin{proof}
We will show that the two $\sp $ structures are both restrictions of the same $\sp $ structure on the cobordism $W$ from $\y $ to $-L(m,1)\# S^{1}\times S^{2}$. In Lemma \ref{triangle} we described a Whitney triangle $$u\colon \{x_{i},b_{j},f_{2}\}\to \{t_{1}^{+},r,t_{2}^{+}\}\to \{x_{i}',s,f_{2}'\}$$ defining a $\sp $ structure $\mathfrak{s}_{z}(u)$ on $W$ for which $\mathfrak{s}_{z}(u)|_{-L(m,1)\# S^{1}\times S^{2}}=\mathfrak{s}_{z}(\{x_{i}',s',f_{k}'\})$ represents the $i$-th $\sp $ structure on $-L(m,1)\# S^{1}\times S^{2}$ as defined by Ozsv\'ath-Szab\'o in \cite[Subsection 4.1]{OS4}. On the other hand, $\mathfrak{s}_{z}(u)|_{\y }=\mathfrak{s}_{z}(\{x_{i},b_{j},f_{k}\})$.  

Recall that $\mathfrak{s}_{i,j}=\mathfrak{t}_{i,j}|_{\y }$, where $\mathfrak{t}_{i,j}$ is the $\sp $ structure on the manifold $\n $ defined by the Equations \eqref{eqsp}-\eqref{eqsp1}. Since the homology group $H_{2}(\n )=\ZZ ^{2}$ is generated by the base spheres $s_{1}$ and $s_{2}$ of the plumbing $\n $, the $\sp $ structures $\mathfrak{t}_{i,j}$ are well defined. The Kirby diagram of $\n $ on the Figure \ref{fig:kirby} describes the surgery cobordism from $S^{3}$ to the 3-manifold $\y $.  In the first step of the surgery cobordism, we add a 1-handle and a 2-handle along the unknot $K_{1}$ to $S^{3}$, obtaining the 3-manifold $-L(m,1)\# S^{1}\times S^{2}$. The core of the 2-handle union the disk spanned by $K_{1}$ in $B^{4}$ represent the base sphere $s_{1}$. Since by definition $$\langle c_{1}(\mathfrak{t}_{i,j}),s_{1}\rangle =2i-m\;,$$ the restriction $\mathfrak{t}_{i,j}|_{-L(m,1)\# S^{1}\times S^{2}}$ is exactly the $i$-th $\sp $ structure on $-L(m,1)\# S^{1}\times S^{2}$ as defined by Ozsv\'ath-Szab\'o in \cite[Subsection 4.1]{OS4}. Thus, $$\mathfrak{t}_{i,j}|_{-L(m,1)\# S^{1}\times S^{2}}=\mathfrak{s}_{z}(\{x_{i}',s',f_{k}'\})=\mathfrak{s}_{z}(u)|_{-L(m,1)\# S {1}\times S^{2}}\;.$$

The second step of the surgery is given by the cobordism $-W$ from $-L(m,1)\# S^{1}\times S^{2}$ to $\y $. The cobordism $-W$ is given by adding a 2-handle to the boundary of the previously constructed manifold. Let us find a generator of the homology group $H_{2}(-W)=\ZZ $. Writing down the intersection form $$Q_{\n }=\left (\begin{array}{cc}
m & 2\\
2 & n \\
\end{array}\right )$$ for $\n $ and denoting by $F=as_{1}+bs_{2}$ the generator of $H_{2}(-W)$, we use the fact that $F$ has to be orthogonal to the sphere $s_{1}$. Thus, $\langle as_{1}+bs_{2},s_{1}\rangle =ma+2b=0$ and we can take $F=2s_{1}-ms_{2}$. We calculate
\begin{align*}
& \langle c_{1}(\mathfrak{t}_{i,j}),F\rangle =2(2i-m)-m(2j-n)=mn-2(j+1)m+4i\\
& F^{2}=4s_{1}^{2}-4ms_{1}s_{2}+m^{2}s_{2}^{2}=m(mn-4)
\end{align*}  
The first Chern class $c_{1}(\mathfrak{s}_{z}(u))$ of the triangle $$u\colon \{x_{i},b_{j},f_{2}\}\to \{t_{1}^{+},r,t_{2}^{+}\}\to \{x_{i}',s,f_{2}'\}$$ from the Heegaard triple diagram had the same evaluation on the generator $\mathcal{H}(\mathcal{Q})$ of $H_{2}(W)$ (with the opposite sign because of the opposite orientation of the cobordism), see Equation \eqref{c1}. Since also $F^{2}=\mathcal{H}(\mathcal{Q})^{2}$, it follows that the $\sp $ structures coincide on $W$: $\mathfrak{s}_{z}(u)=\mathfrak{t}_{i,j}|_{W}$. Now we have $\mathfrak{s}_{z}(u)|_{\y }=\mathfrak{s}_{z}(\{x_{i},b_{j},f_{k}\})$ and $\mathfrak{t}_{i,j}|_{\y }=\mathfrak{s}_{i,j}$, which gives us the desired equality.  
\end{proof}

\begin{corollary} 
\label{cor1}
Let the basepoint of the Heegaard diagram \ref{fig:heeg} lie in the elementary domain $D_{5}$. Then 
\begin{align*}
& \mathfrak{s}_{0,j}=\mathfrak{s}_{z}(\{x_{m},b_{j+1},f_{k}\})\\  
& \mathfrak{s}_{i,0}=\mathfrak{s}_{z}(\{x_{i+1},b_{n},f_{k}\})
\end{align*}
for $0\leq i\leq m-2$, $0\leq j\leq n-2$ and $k\in \{1,2\}$. 
\end{corollary}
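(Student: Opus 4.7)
My plan is to reduce both identities to Lemma \ref{lemmasp} via the $\epsilon $-relations listed earlier and the defining equations \eqref{eqsp}--\eqref{eqsp1}. First I would exploit the affine structure on $\sp (\n )$: from the definitions, the cohomology difference $\mathfrak{t}_{i+1,j+1}-\mathfrak{t}_{i,j}\in H^{2}(\n )$ pairs to $2$ with both $s_{1}$ and $s_{2}$, hence equals $s_{1}^{*}+s_{2}^{*}$. By the proof of Corollary \ref{unique} this restricts to $PD(\mu _{1}+\mu _{2})\in H^{2}(\y )$, so at the level of boundary $\sp $ structures
\[
\mathfrak{s}_{i,j}=\mathfrak{s}_{i+1,j+1}-\mu _{1}-\mu _{2}.
\]
In particular $\mathfrak{s}_{0,j}=\mathfrak{s}_{1,j+1}-\mu _{1}-\mu _{2}$ for $0\leq j\leq n-2$ and $\mathfrak{s}_{i,0}=\mathfrak{s}_{i+1,1}-\mu _{1}-\mu _{2}$ for $0\leq i\leq m-2$. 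By Lemma \ref{lemmasp} the right-hand side $\mathfrak{s}_{1,j+1}$ is realized by the generator $\{x_{1},b_{j+1},f_{k}\}$ and $\mathfrak{s}_{i+1,1}$ by $\{x_{i+1},b_{1},f_{k}\}$.

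It then remains to identify $\mathfrak{s}_{z}(\{x_{1},b_{j+1},f_{k}\})-\mu _{1}-\mu _{2}$ with $\mathfrak{s}_{z}(\{x_{m},b_{j+1},f_{k}\})$, and similarly for the other family. This is exactly the content of the listed $\epsilon $-relations: since
\[
\epsilon (\{x_{m},-\},\{x_{1},-\})=-\mu _{1}-\mu _{2},\qquad \epsilon (\{b_{n},-\},\{b_{1},-\})=-\mu _{1}-\mu _{2},
\]
the change-of-generator formula $\mathfrak{s}_{z}(\mathbf{y})-\mathfrak{s}_{z}(\mathbf{x})=PD[\epsilon (\mathbf{x},\mathbf{y})]$ of \cite[Lemma 2.19]{OS1} yields
\[
\mathfrak{s}_{z}(\{x_{m},b_{j+1},f_{k}\})=\mathfrak{s}_{z}(\{x_{1},b_{j+1},f_{k}\})-\mu _{1}-\mu _{2},
\]
\[
\mathfrak{s}_{z}(\{x_{i+1},b_{n},f_{k}\})=\mathfrak{s}_{z}(\{x_{i+1},b_{1},f_{k}\})-\mu _{1}-\mu _{2},
\]
and the two claimed equalities drop out.

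The only subtlety I anticipate is a consistency check at the overlap of the two families: the corollary presents $\mathfrak{s}_{0,j}$ for $0\leq j\leq n-2$ and $\mathfrak{s}_{i,0}$ for $0\leq i\leq m-2$, while Corollary \ref{unique} records the identification $\mathfrak{s}_{m-2,0}=\mathfrak{s}_{0,n-2}$. Under the formulas above this demands $\mathfrak{s}_{z}(\{x_{m-1},b_{n},f_{k}\})=\mathfrak{s}_{z}(\{x_{m},b_{n-1},f_{k}\})$, which I would verify directly from
\[
\epsilon (\{x_{m-1},b_{n},-\},\{x_{m},b_{n-1},-\})=-(\mu _{1}+\mu _{2})+(\mu _{1}+\mu _{2})=0
\]
in $H_{1}(\y )$. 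Since no holomorphic input or grading computation is needed beyond what is already established, I expect the argument to be short; the main thing to be careful about is keeping orientations in the $\epsilon $-relations and the sign of $s_{1}^{*}+s_{2}^{*}$ straight so that the two restrictions really match.
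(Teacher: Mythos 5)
Your overall strategy is the same as the paper's: use Lemma \ref{lemmasp} on the interior generators, determine the affine structure of the torsion $\sp $ structures, and then transport along the $\epsilon $-relations to reach $\{x_{m},b_{j+1},f_{k}\}$ and $\{x_{i+1},b_{n},f_{k}\}$. The paper's proof does exactly this, but it obtains the translation constants $\mathfrak{s}_{i,j}-\mathfrak{s}_{i+1,j}=PD[\mu _{1}]$ and $\mathfrak{s}_{i,j}-\mathfrak{s}_{i,j+1}=PD[\mu _{2}]$ by combining Lemma \ref{lemmasp} with the listed relations $\epsilon (\{x_{i+1},-\},\{x_{i},-\})=\mu _{1}$, $\epsilon (\{b_{j+1},-\},\{b_{j},-\})=\mu _{2}$, and only then extends by linearity to the boundary indices.

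As written, your argument has two sign problems that happen to cancel. First, applying $\mathfrak{s}_{z}(\mathbf{y})-\mathfrak{s}_{z}(\mathbf{x})=PD[\epsilon (\mathbf{x},\mathbf{y})]$ with $\mathbf{x}=\{x_{m},\ldots \}$, $\mathbf{y}=\{x_{1},\ldots \}$ and $\epsilon (\{x_{m},-\},\{x_{1},-\})=-\mu _{1}-\mu _{2}$ gives $\mathfrak{s}_{z}(\{x_{m},b_{j+1},f_{k}\})=\mathfrak{s}_{z}(\{x_{1},b_{j+1},f_{k}\})+PD[\mu _{1}+\mu _{2}]$, not minus; this plus sign is the one the paper uses. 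Second, your relation $\mathfrak{s}_{i,j}=\mathfrak{s}_{i+1,j+1}-\mu _{1}-\mu _{2}$, derived from the definition of $\mathfrak{t}_{i,j}$ together with the assertion $s_{i}^{*}\mapsto PD(\mu _{i})$ from the proof of Corollary \ref{unique}, is the negative of the relation $\mathfrak{s}_{i,j}=\mathfrak{s}_{i+1,j+1}+PD[\mu _{1}+\mu _{2}]$ forced by Lemma \ref{lemmasp} and the $\epsilon $-relations; at most one of the two can hold, and the one anchored in the Heegaard diagram is the latter (the statement $s_{i}^{*}\mapsto PD(\mu _{i})$ is used in Corollary \ref{unique} only up to sign, so it cannot be taken at face value here). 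Because both of your intermediate equations are off by the same overall sign, the claimed identities drop out anyway, but each displayed step is the negative of the true one, so the proof does not stand as written. The repair is short: compute the constants from the interior $\epsilon $-relations as above, extend by linearity to get $\mathfrak{s}_{i,j}+aPD[\mu _{1}]+bPD[\mu _{2}]=\mathfrak{s}_{i-a,j-b}$, and conclude $\mathfrak{s}_{z}(\{x_{m},b_{j+1},f_{k}\})=\mathfrak{s}_{1,j+1}+PD[\mu _{1}+\mu _{2}]=\mathfrak{s}_{0,j}$, and similarly for the second family. Your consistency check at the overlap $\mathfrak{s}_{m-2,0}=\mathfrak{s}_{0,n-2}$ is fine and is unaffected by the sign issue.
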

\begin{proof}
We use \cite[Lemma 2.19]{OS1} to evaluate the cohomology class in $H^{2}(\y )$ corresponding to the difference of two $\sp $ structures. We calculate
\begin{align*}
& \mathfrak{s}_{i,j}-\mathfrak{s}_{i+1,j}=\mathfrak{s}_{z}(\{x_{i},b_{j},f_{k}\})-\mathfrak{s}_{z}(\{x_{i+1},b_{j},f_{k}\})=PD[\mu _{1}]\\
& \mathfrak{s}_{i,j}-\mathfrak{s}_{i,j+1}=\mathfrak{s}_{z}(\{x_{i},b_{j},f_{k}\})-\mathfrak{s}_{z}(\{x_{i},b_{j+1},f_{k}\})=PD[\mu _{2}]
\end{align*} 
and by linearity it follows that $\mathfrak{s}_{i,j}+aPD[\mu _{1}]+bPD[\mu _{2}]=\mathfrak{s}_{i-a,j-b}$. Thus
\begin{align*}
& \mathfrak{s}_{z}(\{x_{m},b_{j+1},f_{k}\})=\mathfrak{s}_{z}(\{x_{1},b_{j+1},f_{k}\})+PD[\mu _{1}+\mu _{2}]=\mathfrak{s}_{0,j}\\
& \mathfrak{s}_{z}(\{x_{i+1},b_{n},f_{k}\})=\mathfrak{s}_{z}(\{x_{i+1},b_{1},f_{k}\})+PD[\mu _{1}+\mu _{2}]=\mathfrak{s}_{i,0}
\end{align*} for $0\leq i\leq m-2$, $0\leq j\leq n-2$ and $k\in \{1,2\}$.
\end{proof}

We have thus obtained:

\begin{proof}[Proof of Theorem \ref{th1}]
In Subsection \ref{CF} we have shown that $HF^{+}(\y ,\mathfrak{s})$ has two $\ta $ summands in each torsion $\sp $ structure $\mathfrak{s}$ on $\y $. In one torsion $\sp $ structure, $HF^{+}(\y ,\mathfrak{s})$ has an additional $\FF $ sumand. We have also shown that the action of $\Lambda ^{*}(H_{1}(Y,\ZZ )/\operatorname{Tors})$ maps the generator of $\ta $ with the higher absolute grading to the generator with the lower absolute grading. In Proposition \ref{calc} we have calculated that 
\begin{xalignat*}{1}
& \gr (\{x_{i},b_{j},f_{2}\})=\\
& =\frac{m^{2}n+mn^{2}-4mn(i+j+1)+4n(i^{2}+2i)+4m(j^{2}+2j)-16ij}{4(mn-4)}=d(i,j)
\end{xalignat*} for $1\leq i\leq m-1$ and $1\leq j\leq n-1$. By Lemma \ref{lemmasp}, for those indices we have $\mathfrak{s}_{i,j}=\mathfrak{s}_{z}(\{x_{i},b_{j},f_{k}\})$.

The $\sp $ structures $\mathfrak{s}_{0,j}$ for $0\leq j\leq n-2$ and $\mathfrak{s}_{i,0}$ for $0\leq i\leq m-2$ are identified with the generators of $\widehat{HF}(\y )$ in the Corollary \ref{cor1}, and the absolute grading of those generators has been calculated in Proposition \ref{calc1}. 

The absolute gradings of the generators in the $\sp $ structures $\mathfrak{s}_{0}$, $\mathfrak{s}_{0}+\mu _{1}+\mu _{2}$, $\mathfrak{s}_{0}-\mu _{1}-\mu _{2}$ and $\mathfrak{s}_{0}-i\mu _{1}-\mu _{2}$ are given in Corollary \ref{specialc}. By Lemma \ref{lemmasp}, Corollary \ref{cor1} and Corollary \ref{unique} we have $\mathfrak{s}_{0}=\mathfrak{s}_{1,n-1}=\mathfrak{s}_{m-1,1}$, $\mathfrak{s}_{0}+\mu _{1}+\mu _{2}=\mathfrak{s}_{0,n-2}=\mathfrak{s}_{m-2,0}$, $\mathfrak{s}_{0}-\mu _{1}-\mu _{2}=\mathfrak{s}_{0,0}$ and $\mathfrak{s}_{0}-i\mu _{1}-\mu _{2}=\mathfrak{s}_{i-1,0}$. By Corollary \ref{specialc} we can ascertain that the top correction terms in those $\sp $ structures are given by 
\begin{xalignat*}{1}
& d_{t}(\y ,\mathfrak{s}_{1,n-1})=d(1,n-1)\\
& d_{t}(\y ,\mathfrak{s}_{0,n-2})=d_{1}(m,n-1)\\
& d_{t}(\y ,\mathfrak{s}_{0,0})=d_{1}(m,1)\\
& d_{t}(\y ,\mathfrak{s}_{i-1,0})=d_{1}(n,i)
\end{xalignat*}
It follows that $d_{t}(\y ,\mathfrak{s}_{i,j})=d(i,j)$ for $1\leq i\leq m-1$ and $1\leq j\leq n-1$. Moreover, $d_{t}(\y ,\mathfrak{s}_{0,j})=d_{1}(m,j+1)$ for $0\leq j\leq n-2$ and $d_{t}(\y ,\mathfrak{s}_{i,0})=d_{1}(n,i+1)$ for $0\leq i\leq m-2$. 
\end{proof}

\end{subsection}
\end{section}

\begin{section}{An application} 
\label{App}
Let $X$ be a closed smooth 4--manifold with $H_{1}(X)=0$ and $b_{2}^{+}(X)=2$. Consider two classes $\alpha ,\beta \in H_{2}(X;\ZZ )$ for which the following holds: 
\begin{xalignat*}{1}
& \alpha \cdot \beta =2\\
& \alpha ^{2}=m>0\\
& \beta ^{2}=n>0 \\
& mn-4>0
\end{xalignat*}
Thus the restriction $Q_{X}|_{\ZZ \alpha +\ZZ \beta }$ of the intersection form $Q_{X}$ to the sublattice spanned by $\alpha $ and $\beta $ is positive definite. 

The classes $\alpha $ and $\beta $ can be represented by embedded surfaces $\Sigma _{1},\Sigma _{2}\subset X$ meeting transversally. Suppose that it is possible to choose $\Sigma _{1}$ and $\Sigma _{2}$ to be spheres whose geometric intersection number is 2. Then the regular neighborhood of the union $\Sigma _{1}\cup \Sigma _{2}$ is a double plumbing of disk bundles over spheres $\n $ with boundary $\y $ that has been the object of our investigation in the previous section. The submanifold $\n \subset X$ carries the positive part of the intersection form $Q_{X}$. Denote by $W=X\backslash \operatorname{Int}(\n )$ its complement in $X$. Thus $W$ is a 4--manifold with boundary $-\y $ which carries the negative part of the intersection form $Q_{X}$. The following result \cite[Theorem 9.15]{OS4} describes the constraints given by the $\sp $ structures on $W$ which restrict to a given $\sp $ structure on $-\y $.  
\begin{theorem}
\label{inequality}
Let $Y$ be a three-manifold with standard $HF^{\infty }$, equipped with a torsion $\sp $ structure $\mathfrak{t}$, and let $d_{b}(Y, \mathfrak{t})$ denote its bottom-most correction term, i.e. the one
corresponding to the generator of $HF^{\infty }(Y, t)$ which is in the kernel of the action by $H_{1}(Y)$. Then, for each negative semi-definite four-manifold $W$ which bounds $Y$ so that
the restriction map $H^{1}(W;\ZZ ) \rightarrow H^{1}(Y;\ZZ )$ is trivial, we have the inequality: 
\begin{xalignat}{1}
\label{ineq}
& c_{1}(\mathfrak{s})^{2} + b_{2}^{-}(W) \leq 4d_{b}(Y,\mathfrak{t}) + 2b_{1}(Y)
\end{xalignat}
for all $\sp $ structures $\mathfrak{s}$ over $W$ whose restriction to $Y$ is $\mathfrak{t}$.
\end{theorem}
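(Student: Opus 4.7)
The plan is to use the punctured cobordism $W'=W\setminus\operatorname{Int}(B^4)$ to transport a distinguished cycle from $HF^{-}(S^3)$ into $HF^{-}(Y,\mathfrak{t})$, and then to extract the numerical inequality from the absolute grading shift together with the definition of $d_b$. The negative semi-definite hypothesis and the triviality of $H^1(W;\ZZ)\to H^1(Y;\ZZ)$ enter at two different places: the former to guarantee nonvanishing of the induced cobordism map, and the latter to place its image in the kernel of the $H_1(Y)$-action.

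First I would view $W'$ as a Spin$^c$ cobordism $S^3\to Y$, noting that $\mathfrak{s}$ extends uniquely across $B^4$, and form the induced map $F^{-}_{W',\mathfrak{s}}\colon HF^{-}(S^3)\to HF^{-}(Y,\mathfrak{t})$. Applying the grading shift formula \eqref{grading} to the top generator $\mathbf{1}^{-}$ of $HF^{-}(S^3)\cong \FF[U]$ (in grading $-2$), its image sits in grading
\[
-2+\Delta,\qquad \Delta=\frac{c_1(\mathfrak{s})^2-2\chi(W')-3\sigma(W')}{4}.
\]
Substituting $\chi(W')=\chi(W)-1$ and $\sigma(W')=-b_2^-(W)$, and computing $\chi(W)=1-2b_1(W)+b_2^-(W)+b_2^0(W)$ from the long exact sequence of the pair $(W,Y)$ (the triviality of $H^1(W)\to H^1(Y)$ forces $b_3(W)=b_1(W)$ via Lefschetz duality), I would rewrite $\Delta=\tfrac14\bigl(c_1(\mathfrak{s})^2+4b_1(W)+b_2^-(W)-2b_2^0(W)\bigr)$.

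The main obstacle is to verify two properties of $\eta:=F^{-}_{W',\mathfrak{s}}(\mathbf{1}^{-})$: first, that $\eta$ lies in the kernel of the $\Lambda^{*}(H_1(Y;\ZZ)/\operatorname{Tors})$-action on $HF^{-}(Y,\mathfrak{t})$; and second, that $\eta\neq 0$. The first is formal: by naturality of the module structure, the $H_1(Y)$-action on $\eta$ factors through the image of $H_1(W')\to H_1(Y)$, which vanishes because its dual $H^1(W)\to H^1(Y)$ does. The second is the substantive Floer-theoretic input and is where the negative semi-definite hypothesis is essential. One decomposes $W'$ into elementary $2$-handle cobordisms and tracks the cobordism maps through each handle, invoking nontriviality results for the individual pieces; when $b_2^0(W)>0$ a totally twisted-coefficient refinement is required so that the radical of the intersection form on $W$ does not cause the map to collapse, the twisted map is then shown to be nontrivial, and the untwisted statement follows by specialisation.

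Granting these two claims, $\eta$ belongs to the $\FF[U]$-subtower of $HF^{-}(Y,\mathfrak{t})$ annihilated by $\Lambda^{*}(H_1/\operatorname{Tors})$, whose top generator has grading $d_b(Y,\mathfrak{t})-2$. Therefore $-2+\Delta\le d_b(Y,\mathfrak{t})-2$, that is, $\Delta\le d_b(Y,\mathfrak{t})$. Plugging in the formula for $\Delta$ and rearranging,
\[
c_1(\mathfrak{s})^2+b_2^-(W)\le 4d_b(Y,\mathfrak{t})-4b_1(W)+2b_2^0(W).
\]
The long exact sequence of the pair $(W,Y)$ identifies $b_2^0(W)$ with the rank of the image of $H_2(Y)\to H_2(W)$, which is bounded above by $b_2(Y)=b_1(Y)$. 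Combined with $b_1(W)\ge 0$, this yields the advertised inequality $c_1(\mathfrak{s})^2+b_2^-(W)\le 4d_b(Y,\mathfrak{t})+2b_1(Y)$.
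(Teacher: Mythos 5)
The paper itself does not prove Theorem \ref{inequality}: it is quoted verbatim from Ozsv\'ath--Szab\'o \cite[Theorem 9.15]{OS4} and used as a black box, so there is no internal proof to compare against. Measured against the argument in \cite{OS4}, your outline has the right skeleton --- puncture $W$, apply the cobordism map and the degree-shift formula, and convert $b_{1}(W)\geq 0$ together with $b_{2}^{0}(W)\leq b_{2}(Y)=b_{1}(Y)$ into the $2b_{1}(Y)$ slack --- and your topological bookkeeping ($b_{3}(W)=b_{1}(W)$ from the triviality of $H^{1}(W)\to H^{1}(Y)$, the values of $\chi(W')$ and $\sigma(W')$, and the identification of the radical of $Q_{W}$ with the image of $H_{2}(Y;\mathbb{Q})$) is correct.

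The genuine gap is the final inference. Knowing that $\eta=F^{-}_{W',\mathfrak{s}}(\mathbf{1}^{-})$ is nonzero and annihilated by $\Lambda^{*}(H_{1}(Y)/\operatorname{Tors})$ does not place $\eta$ in the $\FF[U]$-subtower whose top generator sits in degree $d_{b}(Y,\mathfrak{t})-2$: in general $HF^{-}(Y,\mathfrak{t})$ contains $U$-torsion classes in the kernel of the action in gradings unrelated to $d_{b}$, and nothing in your two claims rules out $\eta$ being one of them. What is actually needed is that $U^{N}\eta\neq 0$ for every $N$, equivalently that the induced map on $HF^{\infty}$ carries the tower of $HF^{\infty}(S^{3})$ injectively into the bottom tower of $HF^{\infty}(Y,\mathfrak{t})$; only then does $\eta$ project to a nonzero element of $\operatorname{im}(HF^{-}\to HF^{\infty})$ killed by the action, which by the definition of $d_{b}$ forces $\operatorname{gr}(\eta)\leq d_{b}(Y,\mathfrak{t})-2$. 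That $HF^{\infty}$-nontriviality is precisely the substantive content of Section 9 of \cite{OS4} (where one first surgers out loops to reduce to $b_{1}(W)=0$ and then uses twisted coefficients to control the degenerate directions of $Q_{W}$); your sketch gestures at this machinery but records only the weaker conclusion $\eta\neq 0$, which does not suffice. A secondary imprecision: the naturality you invoke concerns $H_{1}(Y)\to H_{1}(W')$ (there is no map $H_{1}(W')\to H_{1}(Y)$), and triviality of $H^{1}(W;\ZZ)\to H^{1}(Y;\ZZ)$ only forces the image of $H_{1}(Y)$ to be torsion in $H_{1}(W')$, so concluding that the action kills the image of $F^{-}_{W',\mathfrak{s}}$ requires the standard but not purely formal fact that the intertwining of the actions factors through $H_{1}/\operatorname{Tors}$ of the cobordism.
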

According to \cite[Theorem 10.1]{OS2}, every 3--manifold $Y$ with $b_{1}(Y)=1$ has standard $HF^{\infty }$. Theorem \ref{inequality} can thus be applied in our case for the pair $(W,-\y )$. Correction terms of the manifold $\y $ have been calculated in the previous section. In order to apply inequality \eqref{ineq}, we have to identify the restriction map $H^{2}(W)\to H^{2}(-\y )$ and see how $\sp $ structures on $W$ restrict to $\sp $ structures on $-\y $. Before considering particular cases we establish the following:
\begin{proposition}
\label{prop1}
With notation as above, $H^{1}(W)=0$, $H^{2}(\n )\cong \ZZ ^{2}$, $H^{2}(W)\cong \ZZ ^{b_{2}^{-}(X)+1}\oplus \tau $ and $H^{2}(\y )\cong \ZZ \oplus T$, where $\tau $ and $T$ are torsion groups and $T$ has order $mn-4$. In the special case when $b_{2}^{-}(X)=0$, we have $T/\tau \cong \tau $.
\end{proposition}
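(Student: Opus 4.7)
The plan is to compute $H_{*}(\n)$ from its homotopy type, then use the long exact sequence of the pair $(X,W)$ together with Lefschetz duality to read off $H_{*}(W)$, pass to cohomology via the universal coefficient theorem, and finally handle the special case by exploiting the factorisation $Q_{\n}=MM^{T}$.

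First I would observe that $\n$ deformation retracts onto the union of its two zero sections, a pair of $2$--spheres meeting transversally in two points. Giving this union a CW structure with the intersection points as $0$--cells shows it is homotopy equivalent to $S^{2}\vee S^{2}\vee S^{1}$, so $H_{0}(\n)=\ZZ$, $H_{1}(\n)=\ZZ$, $H_{2}(\n)=\ZZ\langle s_{1},s_{2}\rangle$ and $H_{k}(\n)=0$ for $k\geq 3$. The universal coefficient theorem then gives $H^{2}(\n)=\ZZ^{2}$ and $H^{3}(\n)=0$, settling the claim about $H^{2}(\n)$.

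Next, because $X$ is closed with $H_{1}(X)=0$, Poincar\'e duality gives $H_{3}(X)\cong H^{1}(X)=0$. Combining excision with Lefschetz duality identifies $H_{k}(X,W)\cong H_{k}(\n,\y)\cong H^{4-k}(\n)$, so the long exact sequence of $(X,W)$ collapses to
\[0\to \ZZ\to H_{2}(W)\to H_{2}(X)\xrightarrow{\Phi}\ZZ^{2}\to H_{1}(W)\to 0,\]
where, under the identification $H_{2}(X,W)\cong H^{2}(\n)$, the map $\Phi$ sends $\gamma\mapsto(\gamma\cdot s_{1},\gamma\cdot s_{2})=(\gamma\cdot\alpha,\gamma\cdot\beta)$. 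Since $Q_{X}|_{\ZZ\alpha+\ZZ\beta}$ has determinant $mn-4\neq 0$, $\Phi$ has rank $2$, so $H_{2}(W)$ is free of rank $b_{2}^{-}(X)+1$ and $\tau:=H_{1}(W)$ is finite. Applying universal coefficients once more, $H^{1}(W)=\mathrm{Hom}(\tau,\ZZ)\oplus\mathrm{Ext}(H_{0}(W),\ZZ)=0$ and $H^{2}(W)\cong\ZZ^{b_{2}^{-}(X)+1}\oplus\tau$. Poincar\'e duality on $\y$ together with the calculation in Subsection \ref{HD} gives $H^{2}(\y)\cong H_{1}(\y)=\ZZ\oplus T$ with $|T|=mn-4$.

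The main obstacle is the special case $b_{2}^{-}(X)=0$. Here $Q_{X}$ is a positive definite unimodular form of rank $2$ on $H_{2}(X)\cong\ZZ^{2}$, hence $Q_{X}\cong I_{2}$; fix an orthonormal basis $e_{1},e_{2}$ and write $\alpha=a_{1}e_{1}+a_{2}e_{2}$, $\beta=b_{1}e_{1}+b_{2}e_{2}$. With respect to this basis and the basis of $H^{2}(\n)$ Hom-dual to $s_{1},s_{2}$, the map $\Phi$ has matrix $M=\bigl(\begin{smallmatrix}a_{1}&a_{2}\\b_{1}&b_{2}\end{smallmatrix}\bigr)$, and a direct computation (Lagrange's identity) yields $MM^{T}=\bigl(\begin{smallmatrix}m&2\\2&n\end{smallmatrix}\bigr)=Q_{\n}$. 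Under the identifications $T\cong\mathrm{coker}(Q_{\n})$ from Subsection \ref{HD} and $\tau=\mathrm{coker}(\Phi)=\mathrm{coker}(M)$, the factorisation $Q_{\n}=M\circ M^{T}$ forces $\mathrm{im}(Q_{\n})\subseteq\mathrm{im}(M)$ in $\ZZ^{2}$ and produces a short exact sequence
\[0\to\mathrm{im}(M)/\mathrm{im}(Q_{\n})\to T\to\tau\to 0.\]
Injectivity of $M$ identifies $\mathrm{im}(M)/\mathrm{im}(MM^{T})\cong\ZZ^{2}/\mathrm{im}(M^{T})\cong\tau$, using that $M$ and $M^{T}$ have the same Smith normal form; thus $T/\tau\cong\tau$, completing the proof.
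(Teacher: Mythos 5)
Your proof is correct, but it takes a genuinely different route from the paper's. The paper works with the Mayer--Vietoris sequence in cohomology of the triple $(X,\n ,W)$, supplemented by the long exact sequences of the pairs $(\n ,\y )$ and $(W,\y )$: there, $H^{1}(W)=0$ comes from the observation that $H^{1}(\n )\to H^{1}(\y )$ is already an isomorphism, the free rank of $H^{2}(W)$ from the injectivity of $H^{2}(X)\to H^{2}(W)\oplus H^{2}(\n )$, and the special case from the sequence of $(W,\y )$ together with the vanishing of the map $H_{2}(W)\to H_{2}(W,\y )$ forced by the triviality of $Q_{W}$. You instead run the single long exact sequence of the pair $(X,W)$, use excision and Lefschetz duality to identify $H_{k}(X,W)$ with $H^{4-k}(\n )$, and recognize the connecting map $\Phi $ as intersection with $\alpha $ and $\beta $; this packages everything into one sequence and has the advantage of producing explicit presentations $\tau \cong \operatorname{coker}(M)$ and $T\cong \operatorname{coker}(MM^{T})$. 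Your special case is then pure lattice algebra: the factorization $Q_{\n }=MM^{T}$ over the unimodular lattice $(\ZZ ^{2},I_{2})$ together with the Smith-normal-form identity $\operatorname{coker}(M)\cong \operatorname{coker}(M^{T})$ gives the extension $0\to \tau \to T\to \tau \to 0$ directly, bypassing the paper's discussion of $g_{2}$. The one point to tidy up is that your short exact sequence exhibits a subgroup of $T$ \emph{abstractly} isomorphic to $\tau $, namely $\operatorname{im}(M)/\operatorname{im}(Q_{\n })$, whereas the intended reading of the statement (and its use in Proposition \ref{prop2}) takes $\tau \subset T$ to be the image of the restriction map $H^{2}(W)\to H^{2}(\y )$; you should either verify that your subgroup is that image (it is, by naturality of the dualities you invoke), or remark that only the numerical consequence $|T|=|\tau |^{2}$ is needed later, which your argument already supplies.
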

\begin{proof}
Consider the Mayer--Vietoris sequence in cohomology of the triple $(X,\n ,W)$ (all coefficients will be $\ZZ $ unless stated otherwise):
\begin{eqnarray*}
& 0\rightarrow H^{1}(W)\oplus H^{1}(\n )\stackrel{f_{1}}\rightarrow H^{1}(\y ) \stackrel{f_{2}}\rightarrow  H^{2}(X)\stackrel{f_{3}}\rightarrow H^{2}(W)\oplus H^{2}(\n ) \stackrel{f_{4}}\rightarrow H^{2}(\y )\rightarrow 0\\
& 0\rightarrow H^{1}(W)\oplus \ZZ \stackrel{f_{1}}\rightarrow \ZZ \stackrel{f_{2}}\rightarrow \ZZ ^{b_{2}^{-}(X)+2}\stackrel{f_{3}}\rightarrow H^{2}(W)\oplus \ZZ ^{2} \stackrel{f_{4}}\rightarrow \ZZ \oplus T\rightarrow 0
\end{eqnarray*}  
At the beginning and the end of the sequence we have zeros since $H_{1}(X)=0$. Since $H_{1}(\y )\cong \ZZ [\mu _{3}]\oplus T[\mu _{1},\mu _{2}]$, it follows from Poincar\'{e} duality and the universal coefficient theorem that $H^{2}(\y )\cong \ZZ \oplus T$ and $H^{1}(\y )\cong \ZZ $. The torsion ele\-ments $\mu _{1}$ and $\mu _{2}$ are the boundary circles of the fibre disks in the plumbing $\n $. The generator $\mu _{3}$ of the free part comes from the 1-handle of the plumbing, which means that $f_{1}|_{H^{1}(\n )}\colon H^{1}(\n )\to H^{1}(\y )$ is an isomorphism. Thus $H^{1}(W)=0$ and  the restriction map $f_{1}|_{H^{1}(W)}\colon H^{1}(W) \rightarrow H^{1}(\y )$ is always trivial, satisfying the assumption in Theorem \ref{inequality}. Since $f_{1}$ is an isomorphism, by exactness $f_{2}$ is a trivial map. It follows that $f_{3}$ is injective. To understand the homomorphism $f_{4}$, recall the long exact sequence in homology of the pair $(\n ,\y )$:
\begin{eqnarray}
\label{NY}
& \ldots \rightarrow H_{2}(\n )\stackrel{A}\rightarrow H_{2}(\n ,\y )\stackrel{B}\rightarrow H_{1}(\y )\stackrel{C}\rightarrow H_{1}(\n )\rightarrow H_{1}(\n ,\y ) \\
& \ldots \longrightarrow \ZZ ^{2}\stackrel{A}\longrightarrow \ZZ ^{2}\stackrel{B}\longrightarrow \ZZ \oplus T\stackrel{C}\longrightarrow \ZZ \longrightarrow 0
\end{eqnarray}
As described above, the restriction $C|_{\ZZ }\colon \ZZ [\mu _{3}]\to H_{1}(\n )$ is an isomorphism. It follows that the image of the map $B\colon H_{2}(\n ,\y )\to H_{1}(\y )$ is equal to $T$. The same is true for the Poincar\'{e} dual map $f_{4}|_{H^{2}(\n )}\colon H^{2}(\n )\to H^{2}(\y )$ in the Mayer--Vietoris sequence above. So there must be a free sumand $\ZZ \subseteq H^{2}(W)$ which is mapped by $f_{4}$ isomorphically onto the free sumand of $H^{2}(\y )$ (this is the part dual to the part of $H_{2}(W)$ which comes from the boundary). Now since $f_{3}$ is injective, the free subgroup $\ZZ ^{b_{2}^{-}(X)}\subseteq H^{2}(X)$ maps into $H^{2}(W)$ and it follows that the free part of $H^{2}(W)$ has dimension $b_{2}^{-}(X)+1$. Since $H^{1}(W)=0$, it follows from the universal coefficient theorem that $H_{1}(W)=\tau $ is torsion and consequently $$H^{2}(W)\cong \ZZ ^{b_{2}^{-}(X)+1}\oplus \tau \;.$$
Based on our conclusions above, a part of the cohomology Mayer--Vietoris sequence of the triple $(X,\n ,W)$ looks like 
\begin{xalignat}{1}
\label{MV}
\ldots \stackrel{0}\longrightarrow H^{2}(X)\stackrel{f_{3}}\longrightarrow H^{2}(W)\oplus H^{2}(\n )\stackrel{f_{4}}\longrightarrow H^{2}(\y )\longrightarrow 0\\
\ldots \stackrel{0}\longrightarrow \ZZ ^{b_{2}^{-}(X)+2}\stackrel{f_{3}}\longrightarrow (\ZZ ^{b_{2}^{-}(X)+1}\oplus \tau )\oplus \ZZ ^{2}\stackrel{f_{4}}\longrightarrow \ZZ \oplus T\longrightarrow 0
\end{xalignat}  
The restriction $f_{4}|_{H^{2}(\n )}$ can be described by its Poincar\'{e} dual $B\colon H_{2}(\n ,\y )\to H_{1}(\n )$ in the long exact sequence \eqref{NY}. Consider now the restriction $f_{4}|_{H^{2}(W)}$ in \eqref{MV}. The sumand $\tau \subseteq H^{2}(W)$ maps by $f_{4}$ injectively into the torsion group $T\subseteq H^{2}(\y )$. We can observe the Poincar\'{e} dual of the restriction $f_{4}|_{H^{2}(W)}$ in the long exact sequence of the pair $(W,\y )$:
\begin{eqnarray*}
& H_{3}(W,\y )\rightarrow H_{2}(\y )\stackrel{g_{1}}\longrightarrow H_{2}(W)\stackrel{g_{2}}\longrightarrow H_{2}(W,\y )\stackrel{g_{3}}\longrightarrow H_{1}(\y )\stackrel{g_{4}}\longrightarrow H_{1}(W)\rightarrow \ldots \\
& 0\rightarrow \ZZ \stackrel{g_{1}}\longrightarrow \ZZ ^{b_{2}^{-}(X)+1}\stackrel{g_{2}}\longrightarrow \ZZ ^{b_{2}^{-}(X)+1}\oplus \tau \stackrel{g_{3}}\longrightarrow \ZZ \oplus T\stackrel{g_{4}}\longrightarrow \tau \stackrel{0}\rightarrow \ldots 
\end{eqnarray*}
Since $H_{3}(W,\y )\cong H^{1}(W)=0$, the map $g_{1}$ is injective. The homomorphism $g_{2}\colon H_{2}(W)\to H_{2}(W,\y )$ is given by the intersection form $Q_{W}$ of the manifold $W$. $Q_{W}$ is trivial on the sumand $\ZZ \subseteq H_{2}(W)$ which corresponds to the image of $g_{1}$. The restriction $Q_{W}|_{\ZZ ^{b_{2}^{-}(X)}}$ is negative definite. The map $g_{3}$ maps the free sumand of $H_{2}(W,\y )$ which comes from the boundary isomorphically onto the free sumand of $H_{1}(\y )$. In the special case when $b_{2}^{-}(X)=0$, the intersection form $Q_{W}$ is trivial and from the exact sequence above it follows that 
\begin{xalignat}{1}
\label{b20}
& T/\tau \cong \tau \quad \quad \textrm{(when $b_{2}^{-}(X)=0$)}\;.
\end{xalignat} 

\end{proof}

We have described the map $f_{4}$ in the Mayer--Vietoris sequence \eqref{MV} which tells us how cohomology classes on $W$ and $\n $ restrict to cohomology classes on the boundary $\y $. As remarked in Subsubsection \ref{notspin}, $\sp $ structures on 3-- and 4--manifolds may be identified by cohomology classes. Using this identification we may study the restrictions of $\sp $ structures on $W$ and $\n $ to $\sp $ structures on the boundary $\y $. 

When the 4-manifold $X$ has $b_{2}^{-}(X)=0$, the obstruction Theorem \ref{inequality} implies the following result.    
\begin{proposition} 
\label{prop2}
Let $X$ be a closed smooth 4-manifold with $H_{1}(X)=0$, $b_{2}^{+}(X)=2$ and $b_{2}^{-}(X)=0$. Suppose there are two spheres $\Sigma _{1},\Sigma _{2}\subset X$ with $\Sigma _{1} ^{2}=m$, $\Sigma _{2}^{2}=n$ and $\Sigma _{1}\cdot \Sigma _{2}=2$. Denote by $\y $ the boundary of a regular neighbourhood of $\Sigma _{1}\cup \Sigma _{2}$ and let $T=H_{1}(\y )$. Then for some subgroup $\tau \subset T$ with $|\tau |^{2}=|T|$ and some $\sp $ structure $\mathfrak{s}_{0}$ on $\y $, we have $d_{b}(\y ,\mathfrak{s}_{0}+\phi )=-\frac{1}{2}$ for every $\phi \in \tau$. 
\end{proposition}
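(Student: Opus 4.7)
The plan is to apply Theorem \ref{inequality} twice---once to $W$ as a negative semi-definite manifold with boundary $-\y$, and once to $-W$ as one with boundary $\y$---and combine the resulting bounds.

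First I would extract from Proposition \ref{prop1} the algebraic data specific to $b_{2}^{-}(X)=0$: that $H^{1}(W)=0$ (so the hypothesis on $H^{1}(W;\ZZ)\to H^{1}(\y;\ZZ)$ in Theorem \ref{inequality} holds automatically), that $H^{2}(W)\cong \ZZ \oplus \tau $ with $|\tau |^{2}=|T|$ (via $T/\tau \cong \tau $), and, from the long exact sequence of the pair $(W,\partial W)$ appearing in that proof, the vanishing of the intersection form $Q_{W}$. Consequently $Q_{-W}=-Q_{W}$ also vanishes, so both $W$ and $-W$ are negative semi-definite with $b_{2}^{-}=0$. For any $\sp $ structure $\mathfrak{s}$ on $W$ whose boundary restriction is torsion, $c_{1}(\mathfrak{s})$ admits a rational relative lift to $H^{2}(W,\partial W;\mathbb{Q})$, and the triviality of $Q_{W}$ forces $c_{1}(\mathfrak{s})^{2}=0$.

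Next I would pin down the set of torsion $\sp $ structures on $\y $ that extend over $W$. The restriction $\sp (W)\to \sp (\y )$ is affine over $H^{2}(W)\to H^{2}(\y )$, whose image, by Proposition \ref{prop1}, is $\ZZ \oplus \tau $ (identifying $\tau $ with its injective image in $T$). Restricting any $\sp $ structure on $X$ to $\y $ produces a $\sp $ structure which, being the further restriction of an element of $\sp (\n )$, is of the form $\mathfrak{s}_{i,j}$ and hence torsion; such a $\sp $ structure extends over $W$ by construction, so pick one as the baseline $\mathfrak{s}_{0}$. The full set of torsion $\sp $ structures on $\y $ extending over $W$ is then the affine coset $\mathfrak{s}_{0}+\tau $.

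Now I apply Theorem \ref{inequality} twice. Applied to $-W$ bounding $\y $, with $c_{1}(\mathfrak{s})^{2}=0$, $b_{2}^{-}(-W)=0$ and $b_{1}(\y )=1$, the inequality reduces to $0\le 4d_{b}(\y ,\mathfrak{t})+2$, giving $d_{b}(\y ,\mathfrak{t})\ge -\tfrac{1}{2}$ for every $\mathfrak{t}\in \mathfrak{s}_{0}+\tau $. Applied to $W$ bounding $-\y $, the same computation yields $d_{b}(-\y ,\mathfrak{t})\ge -\tfrac{1}{2}$, and the orientation-reversal identity $d_{b}(-\y ,\mathfrak{t})=-d_{t}(\y ,\mathfrak{t})$ converts this to $d_{t}(\y ,\mathfrak{t})\le \tfrac{1}{2}$. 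By Theorem \ref{th1} every torsion $\sp $ structure on $\y $ satisfies $d_{t}=d_{b}+1$ (including the exceptional $\mathfrak{s}_{m-1,1}$, where the extra $\FF $-summand sits at the bottom grading and so does not raise $d_{t}$), hence $d_{b}(\y ,\mathfrak{t})\le -\tfrac{1}{2}$. Combining the two bounds gives $d_{b}(\y ,\mathfrak{s}_{0}+\phi )=-\tfrac{1}{2}$ for every $\phi \in \tau $.

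The step I expect to be the main obstacle is the careful orientation bookkeeping: checking that the vanishing of $Q_{W}$ (which a priori lives on $H_{2}(W)/\mathrm{rad}$) really forces $c_{1}(\mathfrak{s})^{2}=0$ on the $4$-manifold with boundary using the relative rational lift licensed by torsion of $\mathfrak{s}|_{\y }$; and correctly invoking the orientation-reversal identity $d_{b}(-\y )=-d_{t}(\y )$ together with the uniform grading gap $d_{t}-d_{b}=1$ extracted from Theorem \ref{th1} to transport the lower bound on $d_{b}(-\y )$ into a matching upper bound on $d_{b}(\y )$.
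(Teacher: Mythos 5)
Your proposal is correct and follows essentially the same route as the paper's proof: both extract from Proposition \ref{prop1} that $H^{1}(W)=0$, $H^{2}(W)\cong \ZZ \oplus \tau $ with $T/\tau \cong \tau $, and that $Q_{W}$ vanishes (hence $c_{1}(\mathfrak{s})^{2}=0$), then apply Theorem \ref{inequality} to both $(W,-\y )$ and $(-W,\y )$ and combine the two bounds via $d_{b}(-\y ,\mathfrak{t})=-d_{t}(\y ,\mathfrak{t})$ and the gap $d_{t}=d_{b}+1$ from Theorem \ref{th1}. The only difference is the order in which the two applications of the obstruction theorem appear, which is immaterial.
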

\begin{proof}
Denote as usual by $\n \subset X$ the regular neighbourhood of $\Sigma _{1}\cup \Sigma _{2}$ and by $W=X\backslash \operatorname{Int}(\n )$ its complement. It follows from Proposition \ref{prop1} that $H^{2}(W)\cong \ZZ \oplus \tau $ for some torsion group $\tau \subset T$ and that $T/\tau \cong \tau $, thus $|T|=|\tau |^{2}$. Recall the Mayer--Vietoris sequence of the triple $(X,\n ,W)$ we discussed in Proposition \ref{prop1}:
\begin{eqnarray*}
\ldots \stackrel{0}\longrightarrow H^{2}(X)\stackrel{f_{3}}\longrightarrow H^{2}(W)\oplus H^{2}(\n )\stackrel{f_{4}}\longrightarrow H^{2}(\y )\longrightarrow 0\\
\ldots \stackrel{0}\longrightarrow \ZZ ^{2}\stackrel{f_{3}}\longrightarrow (\ZZ \oplus \tau )\oplus \ZZ ^{2}\stackrel{f_{4}}\longrightarrow \ZZ \oplus T\longrightarrow 0
\end{eqnarray*}
The $\sp $ structures on $W$ which restrict to the $\sp $ structures on $-\y $ co\-rres\-pond to the image $f_{4}(\tau )\subset T$. Since $b_{2}^{-}(X)=0$, the intersection form $Q_{W}$ of the manifold $W$ is trivial and thus $c_{1}(\mathfrak{s})^{2}=0$ for any $\sp $ structure $\mathfrak{s}$ on the manifold $W$. From Theorem \ref{inequality} it follows that if indeed $-\y $ bounds a negative semi-definite submanifold $W$ inside $X$, then the inequality $d_{b}(-\y ,\mathfrak{t})\geq -\frac{1}{2}$ holds for any torsion $\sp $ structure $\mathfrak{t}$ on $-\y $ which is a restriction of a $\sp $ structure on $W$. The bottom and top correction terms are defined in \cite[Definition 3.3]{LRS}, where also the duality $d_{b}(-\y ,\mathfrak{t})=-d_{t}(\y ,\mathfrak{t})$ is shown \cite[Proposition 3.7]{LRS}. By Theorem \ref{th1} we have $d_{t}(\y ,\mathfrak{t})=d_{b}(\y ,\mathfrak{t})+1$. So for any such $\sp $ structure we have $d_{b}(-\y ,\mathfrak{t})=-d_{t}(\y ,\mathfrak{t})=-d_{b}(\y ,\mathfrak{t})-1$, and consequently $$d_{b}(\y ,\mathfrak{t})=-d_{b}(-\y ,\mathfrak{t})-1\leq -\frac{1}{2}\;.$$ 

Since the intersection form on $W$ is trivial, Theorem \ref{inequality} can also be applied for the pair $(-W,\y )$ to give the inequality $d_{b}(\y ,\mathfrak{t})\geq -\frac{1}{2}$. Both inequalities amount to the equality $$d_{b}(\y ,\mathfrak{t})=-\frac{1}{2}$$ for any torsion $\sp $ structure $\mathfrak{t}$ on $\y $ which is a restriction of a $\sp $ structure on $W$. 
\end{proof}

\begin{subsection}{Double plumbings inside $\CC P^{2}\# \CC P^{2}$}
\label{CP}
We consider double plumbings inside $X=\CC P^{2}\# \CC P^{2}$. Our question is whether a chosen pair of classes $\alpha ,\beta \in H_{2}(\CC P^{2}\# \CC P^{2})$ with $\alpha \cdot \beta =2$ can be represented by a configuration of two spheres with only two geometric intersections. We will find suitable classes $\alpha ,\beta $ and apply Proposition \ref{prop2}.  

Now $H_{2}(\CC P^{2}\# \CC P^{2})\cong \ZZ ^{2}$ has a standard basis $(e_{1},e_{2})$ with $e_{i}$ representing the class of the cycle $\CC P^{1}\subset \CC P^{2}$. The intersection form $Q_{X}$ of the manifold $X$ is given by 
$\left( \begin{array}{cc}
1 & 0 \\
0 & 1
\end{array} \right )$ and $b_{2}^{-}(X)=0$. We need to choose homologically independent classes $\alpha ,\beta \in H_{2}(X)$ that are both representable by spheres and for which $\alpha \cdot \beta =2$. A class $\zeta =(a,b)\in H_{2}(X)$ has a smooth representative $\Sigma $ of genus $$g(\Sigma )=\frac{(|a|-1)(|a|-2)}{2}+\frac{(|b|-1)(|b|-2)}{2}\;.$$ This representative is obtained by the connected sum of minimal genus representatives for classes of divisibility $a$ and $b$ in $\CC P^{2}$. Thus, nontrivial classes with smooth representatives of genus 0 are given by $ae_{1}+be_{2}\in H_{2}(X)$ where $(|a|,|b|)\in \{0,1,2\}^{2}\backslash \{(0,0)\}$. Up to isomorphism, there are three possible cases for $\alpha $ and $\beta $:   
\begin{xalignat*}{1}
& 2e_{1}+2e_{2}\textrm{ and }2e_{1}-e_{2}\\
& 2e_{1}\textrm{ and }e_{1}+2e_{2}\\
& e_{1}\textrm{ and }2e_{1}+e_{2}
\end{xalignat*}
We will investigate two cases: $\alpha =2e_{1}+2e_{2}, \beta =2e_{1}-e_{2}$ and $\alpha =2e_{1}, \beta =e_{1}+2e_{2}$. For the final case $\alpha =e_{1}$ and $\beta =2e_{1}\pm e_{2}$, the two classes can be represented by a pair of spheres intersecting in two points.  

\begin{subsubsection}{First case: $\alpha =2e_{1}+2e_{2}, \beta =2e_{1}-e_{2}$}
\label{FirstCP}
We have $m=\alpha ^{2}=8, n=\beta ^{2}=5$ and $$H_{1}(\y )=\ZZ \oplus \ZZ _{36}\;.$$ 
We will prove here the first part of Theorem \ref{app}, which says that any two spheres representing the classes $\alpha $ and $\beta $ intersect with at least 4 geometric intersections, and that there exist representatives with exactly 4 intersections.
\begin{proof}[Proof of Theorem \ref{app} a)]
Suppose there are spheres representing $\alpha $ and $\beta $ which have only two geometric intersections. Then the regular neighbourhood of their union is the double plumbing $N_{8,5}$ with boundary $Y_{8,5}$. Applying Theorem \ref{th1} we calculate the bottom-most correction terms $d_{b}$ in all torsion $\sp $ structures on $Y_{8,5}$:
\begin{center}
\begin{tabular}{|l|l|}
\hline
$\sp $\textrm{structure} & $d_{b}(\y ,\mathfrak{s}_{i,j})$\\ \hline
$\mathfrak{s}_{4,3}$ & $-17/18$\\ \hline
$\mathfrak{s}_{3,3}$ & $-3/4$\\ \hline
$\mathfrak{s}_{2,3}$ & $-5/18$\\ \hline
$\mathfrak{s}_{1,3}$ & $17/36$\\ \hline
$\mathfrak{s}_{6,0}$ & $3/2$\\ \hline
$\mathfrak{s}_{5,0}$ & $29/36$\\ \hline
$\mathfrak{s}_{4,0}$ & $7/18$\\ \hline
$\mathfrak{s}_{3,0}$ & $1/4$\\ \hline
$\mathfrak{s}_{2,0}$ & $7/18$\\ \hline
$\mathfrak{s}_{1,0}$ & $29/36$\\ \hline
$\mathfrak{s}_{0,0}$ & $3/2$\\ \hline
$\mathfrak{s}_{7,2}$ & $17/36$\\ \hline
$\mathfrak{s}_{6,2}$ & $-5/18$\\ \hline
$\mathfrak{s}_{5,2}$ & $-3/4$\\ \hline
$\mathfrak{s}_{4,2}$ & $-17/18$\\ \hline
$\mathfrak{s}_{3,2}$ & $-31/36$\\ \hline
$\mathfrak{s}_{2,2}$ & $-1/2$\\ \hline
$\mathfrak{s}_{1,2}$ & $5/36$\\ \hline
\end{tabular}
\begin{tabular}{|l|l|}
\hline
$\sp $\textrm{structure} & $d_{b}(\y ,\mathfrak{s}_{i,j})$\\ \hline
$\mathfrak{s}_{0,2}$ & $19/18$\\ \hline
$\mathfrak{s}_{7,4}$ & $1/4$\\ \hline
$\mathfrak{s}_{6,4}$ & $-5/18$\\ \hline
$\mathfrak{s}_{5,4}$ & $-19/36$\\ \hline
$\mathfrak{s}_{4,4}$ & $-1/2$\\ \hline
$\mathfrak{s}_{3,4}$ & $-7/36$\\ \hline
$\mathfrak{s}_{2,4}$ & $7/18$\\ \hline
$\mathfrak{s}_{1,4}$ & $5/4$\\ \hline
$\mathfrak{s}_{6,1}$ & $7/18$\\ \hline
$\mathfrak{s}_{5,1}$ & $-7/36$\\ \hline
$\mathfrak{s}_{4,1}$ & $-1/2$\\ \hline
$\mathfrak{s}_{3,1}$ & $-19/36$\\ \hline
$\mathfrak{s}_{2,1}$ & $-5/18$\\ \hline
$\mathfrak{s}_{1,1}$ & $1/4$\\ \hline
$\mathfrak{s}_{0,1}$ & $19/18$\\ \hline
$\mathfrak{s}_{7,3}$ & $5/36$\\ \hline
$\mathfrak{s}_{6,3}$ & $-1/2$\\ \hline
$\mathfrak{s}_{5,3}$ & $-31/36$\\ \hline
\end{tabular}
\end{center}

There are only four $\sp $ structures on $Y_{8,5}$ for which the equality $d_{b}(Y_{8,5},\mathfrak{t})=-\frac{1}{2}$ is valid, namely $\mathfrak{s}_{2,2}$, $\mathfrak{s}_{4,4}$, $\mathfrak{s}_{4,1}$ and $\mathfrak{s}_{6,3}$. It follows from Proposition \ref{prop2} that the two spheres which represent the  classes $\alpha ,\beta \in H_{2}(\CC P^{2}\# \CC P^{2})$ have to intersect with a geometric intersection number greater than 2. 
\begin{figure}[here]
\labellist
\footnotesize \hair 2pt
\pinlabel $2e_{1}+2e_{2}$ at 420 358
\pinlabel $2e_{1}-e_{2}$ at 422 164
\endlabellist
\begin{center}
\includegraphics[scale=0.50]{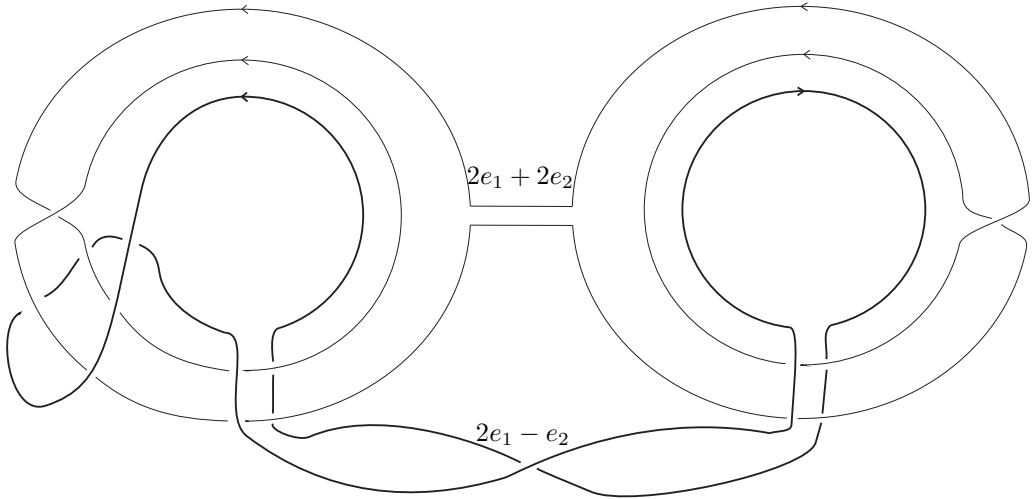}
\caption{Attaching circles of the 2-handles representing classes $2e_{1}+2e_{2}, 2e_{1}-e_{2}\in \CC P^{2}\# \CC P^{2}$ with four geometric intersections}
\label{fig:CP2}
\end{center}
\end{figure}

It is possible to construct genus zero representatives for $\alpha $ and $\beta $ with 4 geometric intersections. We use the following construction of Ruberman \cite{DR}: we represent $\CC P^{2}\# \CC P^{2}$ as a handlebody with two 2-handles with framing 1 and denote by $h_{1}$ and $h_{2}$ the cores of the 2-handles. By adding to $h_{i}$ a disk its boundary spans in $B^{4}$, we obtain a sphere representing $e_{i}$. Now let us represent the class $\alpha =2e_{1}+2e_{2}$: first we take two copies of $h_{i}$ and resolve their double point to get a single disk for $i=1,2$. Then we make a boundary connected sum of both disks (with coherent orientations) and add a disk in $B^{4}$ to the resulting surface. Similarly, we represent the class $\beta =2e_{1}-e_{2}$: first we take two copies of $h_{1}$ and resolve their double point, then we boundary connect sum the obtained disk and $h_{2}$ with the reversed orientation (this means the connected sum is made via a band with a half-twist) and add a disk in $B^{4}$ in the end. In this way we get the two spheres representing classes $\alpha $ and $\beta $ in $\CC P^{2}\# \CC P^{2}$. Figure \ref{fig:CP2} shows the two representatives in $\CC P^{2}\# \CC P^{2}$. The right loop of the dark curve can be slightly pulled left by an isotopy so that it intersects the light curve only twice, thus there remain only four intersections between the two spheres. It follows that 4 is the minimal number of geometric intersections.  
\end{proof}

\end{subsubsection} 
\begin{subsubsection}{Second case: $\alpha =2e_{1}, \beta =e_{1}+2e_{2}$}
\begin{figure}[here]
\labellist
\footnotesize \hair 2pt
\pinlabel $2e_{1}$ at 180 420
\pinlabel $e_{1}+2e_{2}$ at 438 420
\endlabellist
\begin{center}
\caption{Attaching circles of the 2-handles representing classes $2e_{1}, e_{1}+2e_{2}\in \CC P^{2}\# \CC P^{2}$ with two geometric intersections}
\includegraphics[scale=0.50]{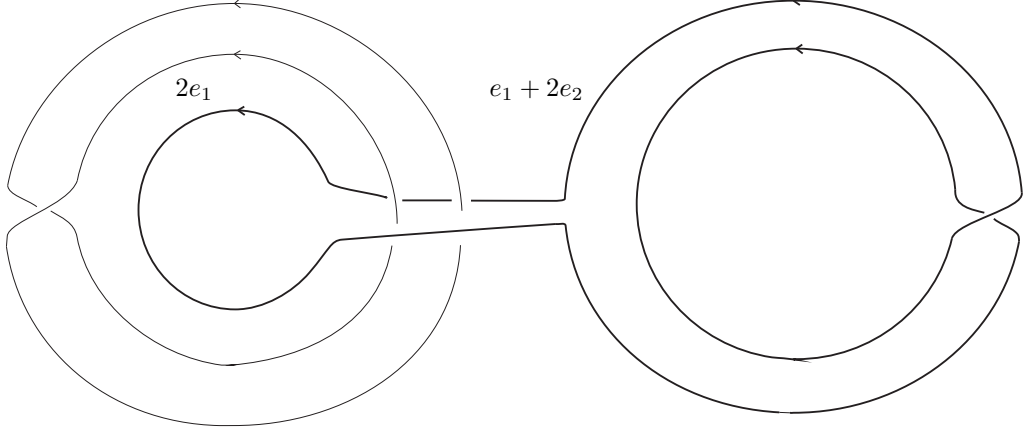}
\label{fig:CP3}
\end{center}
\end{figure}
The squares $m=\alpha ^{2}=4$ and $n=\beta ^{2}=5$ imply that $H_{1}(Y_{4,5})=\ZZ \oplus \ZZ _{16}$. The bottom-most correction terms $d_{b}$ of $Y_{4,5}$ are given by
\begin{center}
\begin{tabular}{|l|l|}
\hline
$\sp $\textrm{structure} & $d_{b}(\y ,\mathfrak{s}_{i,j})$\\ \hline
$\mathfrak{s}_{2,3}$ & $-15/16$\\ \hline
$\mathfrak{s}_{1,3}$ & $-1/2$\\ \hline
$\mathfrak{s}_{2,0}$ & $9/16$\\ \hline
$\mathfrak{s}_{1,0}$ & $1/4$\\ \hline
$\mathfrak{s}_{0,0}$ & $9/16$\\ \hline
$\mathfrak{s}_{3,2}$ & $-1/2$\\ \hline
$\mathfrak{s}_{2,2}$ & $-15/16$\\ \hline
$\mathfrak{s}_{1,2}$ & $-3/4$\\ \hline
\end{tabular}
\begin{tabular}{|l|l|}
\hline
$\sp $\textrm{structure} & $d_{b}(\y ,\mathfrak{s}_{i,j})$\\ \hline
$\mathfrak{s}_{0,2}$ & $1/16$\\ \hline
$\mathfrak{s}_{3,4}$ & $-1/2$\\ \hline
$\mathfrak{s}_{2,4}$ & $-7/16$\\ \hline
$\mathfrak{s}_{1,4}$ & $1/4$\\ \hline
$\mathfrak{s}_{2,1}$ & $-7/16$\\ \hline
$\mathfrak{s}_{1,1}$ & $-1/2$\\ \hline
$\mathfrak{s}_{0,1}$ & $1/16$\\ \hline
$\mathfrak{s}_{3,3}$ & $-3/4$\\ \hline
\end{tabular}
\end{center}
There are the requisite four $\sp $ structures on $Y_{4,5}$ for which $d_{b}$ is equal to $-\frac{1}{2}$:   
$$d_{b}(Y_{4,5},\mathfrak{s}_{1,3})=d_{b}(Y_{4,5},\mathfrak{s}_{3,2})=d_{b}(Y_{4,5},\mathfrak{s}_{3,4})=d_{b}(Y_{4,5},\mathfrak{s}_{1,1})=-\frac{1}{2}\;.$$ 
Indeed, one can choose the two spheres representing classes $\alpha $ and $\beta $ so that their geometric intersection consists of two points, see Figure \ref{fig:CP3}. 
\end{subsubsection}
\end{subsection}

\begin{subsection}{Double plumbings inside $S^{2}\times S^{2}\# S^{2}\times S^{2}$}
\label{S2}
Let us investigate double plum\-bings inside the 4--manifold $X=S^{2}\times S^{2}\# S^{2}\times S^{2}$. Since $X$ is simply connected and its intersection form $Q_{X}=\left( \begin{array}{cccc}
0 & 1 & 0 & 0 \\
1 & 0 & 0 & 0\\
0 & 0 & 0 & 1\\
0 & 0 & 1 & 0\\
\end{array} \right )$ is even, it follows that $X$ is a spin 4--manifold. According to \cite[Theorem 3]{WALL}, if $M$ is a simply connected closed oriented 4--manifold with an indefinite intersection form, then every primitive non\-cha\-rac\-teris\-tic class of $H_{2}(M\# (S^{2}\times S^{2}))$ is re\-pre\-sen\-ted by an embedded sphere. More specifically, Hirai showed that every primitive element of $H_{2}(S^{2}\times S^{2}\# S^{2}\times S^{2})$ can be represented by a smoothly embedded sphere \cite[Theorem 1]{HI}. A class $\mathbf{r}\in H_{2}(X)$ is pri\-mi\-tive if it cannot be written as $d\mathbf{t}$ for any class $\mathbf{t}\in H_{2}(X)$ and any $d\in \ZZ \backslash \{-1,1\}$.  

Denote by $(e_{1},e_{2},e_{3},e_{4})$ the standard basis of $H_{2}(S^{2}\times S^{2}\# S^{2}\times S^{2})$ and consider the classes $$\alpha =ae_{1}+2e_{2},\qquad \beta =e_{1}+te_{3}+e_{4}$$ where $a,t\in \mathbb{N}$ and $a$ is an odd number. We have $m=\alpha ^{2}=4a$, $n=\beta ^{2}=2t$ and $\alpha \cdot \beta =2$. Since $a$ is odd, the classes $\alpha $ and $\beta $ can be represented by spheres. We will prove the second part of Theorem \ref{app}, which says that if $a\geq 5$, then the spheres representing $\alpha $ and $\beta $ intersect with at least 4 geometric intersections. 
\begin{proof}[Proof of Theorem \ref{app} b)]
Suppose these two spheres have exactly two geometric intersections. We denote by $\n $ the regular neighborhood of the union of the spheres and by $W$ its complementary submanifold $W=X\backslash \operatorname{Int}(\n )$ in $X$. While $\n $ is the double plumbing of two disk bundles over spheres whose intersection form is positive definite, the submanifold $W\subset X$ carries the negative part of the intersection form. We have defined $\sp $ structures $\mathfrak{t}_{i,j}$ on $\n $ and denoted by $\mathfrak{s}_{i,j}=\mathfrak{t}_{i,j}|_{\y }$ the restriction of each $\sp $ structure to the boundary 3-manifold. Now we would like to define a $\sp $ structure $\mathfrak{u}_{i,j}\in \operatorname{Spin}^{c}(X)$ for which $\mathfrak{u}_{i,j}|_{\n }=\mathfrak{t}_{i,j}$. Then we will find the restriction $\mathfrak{u}_{i,j}|_{W}$ and use Theorem \ref{inequality} for the pair $(W,-\y )$, equipped with the $\sp $ structure $\mathfrak{u}_{i,j}|_{W}$ for some $i$ and $j$. 
By definition of $\mathfrak{t}_{i,j}\in \operatorname{Spin}^{c}(\n )$, we have $\langle c_{1}(\mathfrak{t}_{i,j}),\alpha \rangle =2i-m$ and $\langle c_{1}(\mathfrak{t}_{i,j}),\beta \rangle =2j-n$. 
For an odd $i$, define a $\sp $ structure $\mathfrak{u}_{i,j}$ on $X$ by 
\begin{align*}
& \langle c_{1}(\mathfrak{u}_{i,j}),e_{1}\rangle = \langle c_{1}(\mathfrak{u}_{i,j}),e_{3} \rangle=-2\\
& \langle c_{1}(\mathfrak{u}_{i,j}),e_{2} \rangle =i-a\\
& \langle c_{1}(\mathfrak{u}_{i,j}),e_{4} \rangle =2j+2
\end{align*}
Then we have $\langle c_{1}(\mathfrak{u}_{i,j}),\alpha \rangle =2i-m$ and $\langle c_{1}(\mathfrak{u}_{i,j}),\beta \rangle =2j-n$, which means that  $\mathfrak{u}_{i,j}|_{\n }=\mathfrak{t}_{i,j}$ and consequently $\mathfrak{u}_{i,j}|_{\y }=\mathfrak{s}_{i,j}$. We can calculate that the orthogonal complement of $H_{2}(\n )$ in $H_{2}(X)$ is spanned by the vectors $\gamma =-ae_{1}+2e_{2}-2e_{3}$ and $\delta =-te_{3}+e_{4}$, for which we have $\gamma ^{2}=-m$, $\delta ^{2}=-n$ and $\gamma \cdot \delta =-2$. Thus, $\gamma $ and $\delta $ are generators of $H_{2}(W)$ and its intersection form is given by the matrix $Q_{W}=\left( \begin{array}{cc}
-m & -2 \\
-2 & -n \\
\end{array} \right )$\;. We calculate 
\begin{align*}
& \langle c_{1}(\mathfrak{u}_{i,j}|_{W}),\gamma \rangle =2i+4\\
& \langle c_{1}(\mathfrak{u}_{i,j}|_{W}),\delta \rangle =n+2j+2
\end{align*}
It follows that the square of the first Chern class $c_{1}(\mathfrak{u}_{i,j}|_{W})$ is given by
\begin{align*}
c_{1}(\mathfrak{u}_{i,j}|_{W})^{2}=-\frac{1}{mn-4}\left (n(2i+4)^{2}+m(n+2j+2)^{2}-4(2i+4)(n+2j+2)\right )\;.
\end{align*}
Now the restriction of $\mathfrak{u}_{i,j}|_{W}$ to the boundary $-\y $ is the $\sp $ structure $\mathfrak{s}_{i,j}$ and Theorem \ref{inequality} implies  $4d_{b}(-\y ,\mathfrak{s}_{i,j})\geq c_{1}(\mathfrak{u}_{i,j}|_{W})^{2}$. Recall from Theorem \ref{th1} the correction terms $d_{b}(\y )$ and compare
\begin{align*}
& -d_{b}(\y ,\mathfrak{s}_{i,j})-1=d_{b}(-\y ,\mathfrak{s}_{i,j})\geq \frac{c_{1}(\mathfrak{u}_{i,j}|_{W})^{2}}{4}\\
& -\frac{c_{1}(\mathfrak{u}_{i,j}|_{W})^{2}}{4}\geq d_{b}(\y ,\mathfrak{s}_{i,j})+1\\
& \frac{n(2i+4)^{2}+m(n+2j+2)^{2}-4(2i+4)(n+2j+2)}{4(mn-4)}\geq \\
& \geq \frac{m^{2}n+mn^{2}-4mn(i+j)+4n(i^{2}+2i)+4m(j^{2}+2j)-16ij-16}{4(mn-4)}
\end{align*}
By simplifying this expression we get the inequality
\begin{align*}
& 4(mn-4)(i+2j+1-a)\geq 0\\
& i+2j+1\geq a
\end{align*}
where $1\leq i\leq 4a-1$ and $1\leq j\leq 2t-1$ and $i$ is odd. If $a\geq 5$, this inequality does not hold for the $\sp $ structure $\mathfrak{s}_{1,1}$. The higher the value of $a$, the more $\sp $ structures $\mathfrak{s}_{i,j}$ do not satisfy the above inequality. Therefore the two spheres representing $\alpha $ and $\beta $ must have at least 4 geometric intersections for all $a\geq 5$. 
\end{proof} 

It might be interesting to compare our result with \cite[Proposition 3.6]{ASKI}. According to the Proposition in the case $n=2$, the classes $(p_{1},q_{1},0,0)$ and $(0,0,p_{2},q_{2})$ (where $p_{i},q_{i}\geq 2$ and $(p_{i},q_{i})=1$ for $i=1,2$) are not disjointly, smoothly, $S^{2}$-representable inside the manifold $S^{2}\times S^{2}\# S^{2}\times S^{2}$.  

The application of $d_{b}$-invariants in the Section \ref{App} is similar to the $d$-invariant obstruction that is used for concordance applications, e.g. in \cite{JANA} and many other papers. 

\end{subsection}

\end{section}
\begin{section}{Geometric intersections of spheres with algebraic intersection one}
\label{One}
Now we investigate a configuration of two spheres which intersect only once inside a closed smooth 4--manifold $X$ with $H_{1}(X)=0$ and $b_{2}^{+}(X)=2$. Such a configuration is a (single) plumbing $\m $ of disk bundles over spheres with Euler numbers $m$ and $n$. The Kirby diagram for $\m $ is a Hopf link of two framed unknoted circles, which can be changed by the operation called slam-dunk \cite[page 163]{kirby} into a single unknoted circle with framing $\frac{mn-1}{n}$. The boundary of $\m $ is thus the lens space $L(mn-1,n)$ with $H_{1}(L(mn-1,n))=\ZZ _{mn-1}$. For labeling lens spaces, we use notation from \cite{OS4}. By the results of \cite[Proposition 3.1]{OS2}, the Heegaard--Floer homology of $\widehat{HF}(L(p,q))$ has one generator in every torsion $\sp $ structure and its absolute grading is given by a recursive formula from \cite[Proposition 4.8]{OS4}: $$d(-L(p,q),i)=\left (\frac{pq-(2i+1-p-q)^{2}}{4pq}\right )-d(-L(q,r),j)$$ where $r$ and $j$ are the reductions of $p$ and $i$ modulo $q$ respectively. In our case $p=mn-1$ and $q=n$, so $r=n-1$. 
In the special case when $n=1$, we need only one application of the recursive formula to obtain
\begin{xalignat}{1}
\label{lensD1}
& d(-L(m-1,1),i)=-\frac{(m-2i-1)^{2}}{4(m-1)}+\frac{1}{4}\;.
\end{xalignat}
In another special case when $n=2$, we need two applications of the recursive formula to obtain
\begin{xalignat}{1}
\label{lensD2}
& d(-L(2m-1,2),i)=-\frac{(m-i)^{2}}{2(2m-1)}+\frac{1}{2}\quad \textrm{if $i$ is even,}\\
& d(-L(2m-1,2),i)=-\frac{(m-i)^{2}}{2(2m-1)}\quad \textrm{if $i$ is odd.}
\end{xalignat}
When $n>2$, starting with $d(-L(n-1,1),j)$ we apply the recursive formula three times to obtain \begin{xalignat*}{1}
& d(-L(mn-1,n),i)=\frac{1}{4}-\frac{(2i+2-mn-n)^{2}}{4n(mn-1)}+\frac{(2j+2-2n)^{2}}{4n(n-1)}-\frac{(2t+1-n)^{2}}{4(n-1)}
\end{xalignat*} where $j$ is the reduction of $i\textrm{ mod $n$}$ and $t$ is the reduction of $j\textrm{ mod ($n-1$)}$. In the special case when $0\leq i<n-1$ and thus $i=j=t$ we get a simplification 
\begin{xalignat}{1}
\label{lensD}
& d(-L(mn-1,n),i)=-\frac{1}{4(mn-1)}\left (nm^{2}+m(n-2i)^{2}-2m(n-2i)\right )+\frac{2}{4}
\end{xalignat}
Denote $L=-L(mn-1,n)$. Let us derive the formula \eqref{lensD} in another way: by defining a $\sp $ structure $\mathfrak{s}_{i}$ on the plumbing $-\m $ and using the Formula \eqref{grading} from \cite[Formula (4)]{OS4} to compute $d(L,\mathfrak{s}_{i}|_{L})$. By removing a 4-ball from $-\m $ we get a cobordism $\mathcal{C}$ from $S^{3}$ to $L$. Since the intersection form of $-\m $ is given by the matrix $Q_{-\m }=\left( \begin{array}{cc}
-m & -1 \\
-1 & -n \\
\end{array} \right )$, we have $\chi (\mathcal{C})=2$ and $\sigma (\mathcal{C})=-2$. Define a $\sp $ structure $\mathfrak{s}_{i}$ on $-\m $ by 
\begin{align}
\label{defs}
& \langle c_{1}(\mathfrak{s}_{i}),s_{1}\rangle =m,\quad \quad \quad \langle c_{1}(\mathfrak{s}_{i}),s_{2}\rangle =n-2i\;,
\end{align} where $s_{1},s_{2}\in H_{2}(-\m )$ are the classes of the base spheres in the plumbing $-\m $. It follows that $$c_{1}(\mathfrak{s}_{i})^{2}=-\frac{nm^{2}+m(n-2i)^{2}-2m(n-2i)}{mn-1}$$ and the formula \eqref{grading} gives us 
\begin{align*}
& d(L,\mathfrak{s}_{i}|_{L})=-\frac{nm^{2}+m(n-2i)^{2}-2m(n-2i)}{4(mn-1)}+\frac{2}{4}\;,
\end{align*} which coincides with Formula \eqref{lensD}. 

From Theorem \ref{inequality} we obtain the following obstruction for the $d$-invariants: 
\begin{proposition} 
\label{prop4}
Let $X$ be a closed smooth 4-manifold with $H_{1}(X)=0$, $b_{2}^{+}(X)=2$ and $b_{2}^{-}(X)=0$. Suppose there are two spheres $\Sigma _{1},\Sigma _{2}\subset X$ with $\Sigma _{1} ^{2}=m>0$, $\Sigma _{2}^{2}=n>0$ and $\Sigma _{1}\cdot \Sigma _{2}=1$. Denote by $L$ the boundary of a regular neighbourhood of $\Sigma _{1}\cup \Sigma _{2}$. Then for some subgroup $\tau \subset H_{1}(L)$ with $|\tau |^{2}=mn-1$ and some $\sp $ structure $\mathfrak{s}_{0}$ on $L$, we have $d(L,\mathfrak{s}_{0}+\phi )=0$ for every $\phi \in \tau$. 
\end{proposition}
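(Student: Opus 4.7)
The plan is to adapt the template of Proposition \ref{prop2} to the single-plumbing setting, replacing the double plumbing $\n $ and its boundary $\y $ by $\m $ and $L$ respectively. Set $W = X\backslash \operatorname{Int}(\m )$; the Mayer--Vietoris sequence of the triple $(X, \m , W)$ reads
\begin{equation*}
0 \to H^{1}(W) \oplus H^{1}(\m ) \to H^{1}(L) \to H^{2}(X) \to H^{2}(W) \oplus H^{2}(\m ) \to H^{2}(L) \to 0 .
\end{equation*}
Since $L$ is a rational homology sphere $H^{1}(L) = 0$, and $\m $ deformation retracts onto a wedge of two spheres, so $H^{1}(\m ) = 0$ and $H^{2}(\m ) = \ZZ ^{2}$; this immediately forces $H^{1}(W) = 0$. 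An Euler characteristic count $\chi (W) = \chi (X) - \chi (\m ) = 4 - 3 = 1$ together with $b_{1}(W) = 0$ gives $b_{2}(W) = 0$, so $H_{2}(W)$ is torsion and the intersection form $Q_{W}$ vanishes; in particular $c_{1}(\mathfrak{s})^{2} = 0$ for every $\sp $ structure on $W$, and both $W$ and $-W$ are negative semi-definite.

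The next step will be the order count for $\tau := H^{2}(W)$. The Mayer--Vietoris sequence in homology, coupled with $H_{2}(L) = 0$, yields an injection $H_{2}(W) \oplus H_{2}(\m ) \hookrightarrow H_{2}(X)$; since the target is free and $H_{2}(W)$ is torsion, $H_{2}(W) = 0$. Lefschetz duality then gives $H_{2}(W, L) \cong H^{2}(W) = \tau $ and $H_{1}(W, L) \cong H^{3}(W) \cong \operatorname{Ext}(H_{2}(W), \ZZ ) = 0$, while the universal coefficient theorem identifies $H_{1}(W) \cong \tau $. The long exact sequence of the pair $(W, L)$ therefore collapses to
\begin{equation*}
0 \to \tau \to H_{1}(L) \to \tau \to 0 ,
\end{equation*}
so $|\tau |^{2} = |H_{1}(L)| = mn - 1$ and $\tau $ sits inside $H_{1}(L)$ as a subgroup.

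Finally, the $\sp $ structures on $L$ which extend over $W$ are exactly the image of the restriction $\sp (W) \to \sp (L)$; under the identification $\sp (L) \leftrightarrow H^{2}(L) = H_{1}(L)$ this image is a coset $\mathfrak{s}_{0} + \tau $ for any chosen origin $\mathfrak{s}_{0}$ that extends. The triviality of $H^{1}(W)$ ensures that the restriction $H^{1}(W) \to H^{1}(L)$ is the zero map, so Theorem \ref{inequality} applies to $(W, -L)$ for every $\sp $ structure in this coset; combining $b_{1}(L) = 0$, $b_{2}^{-}(W) = 0$, $c_{1}^{2} = 0$ with the rational-homology-sphere duality $d(-L, \mathfrak{t}) = -d(L, \mathfrak{t})$, the inequality reduces to $d(L, \mathfrak{s}_{0} + \phi ) \leq 0$. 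Running the identical argument for $(-W, L)$ gives the reverse inequality, so $d(L, \mathfrak{s}_{0} + \phi ) = 0$ for every $\phi \in \tau $.

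I expect the main technical obstacle to be the order count $|\tau |^{2} = mn - 1$, which requires interweaving the Mayer--Vietoris sequence for $(X, \m , W)$ with the long exact sequence of the pair $(W, L)$ and the universal coefficient theorem in order to verify that $W$ is in fact a rational homology four-ball. Once this is in hand the conclusion is a formal adaptation of the end of the proof of Proposition \ref{prop2} and needs no new Heegaard--Floer input beyond the correction term formulas for $-L(mn-1, n)$ already recorded in \eqref{lensD1}--\eqref{lensD}.
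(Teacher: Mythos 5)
Your argument is correct and follows essentially the same route as the paper's proof: a Mayer--Vietoris computation showing the complement has $H^{1}=0$ and torsion $H^{2}\cong \tau$, the long exact sequence of the pair yielding $0\to \tau \to H_{1}(L)\to \tau \to 0$ and hence $|\tau |^{2}=mn-1$, and finally Theorem \ref{inequality} applied to both $(W,-L)$ and $(-W,L)$ using the vanishing of $Q_{W}$. The only (harmless) cosmetic difference is that you obtain $b_{2}(W)=0$ from an Euler characteristic count and the homology Mayer--Vietoris sequence, whereas the paper reads it off from the rank count in the cohomology Mayer--Vietoris sequence.
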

\begin{proof}
Denote by $\m $ the regular neighbourhood of $\Sigma _{1}\cup \Sigma _{2}$ and let $V=X\backslash \operatorname{Int}(\m )$. We study the Mayer--Vietoris sequence in cohomology of the triple $(X,V,\m )$:
\begin{eqnarray*}
& 0\rightarrow H^{1}(V)\oplus H^{1}(\m )\stackrel{f_{1}}\rightarrow H^{1}(L) \stackrel{f_{2}}\rightarrow  H^{2}(X)\stackrel{f_{3}}\rightarrow H^{2}(V)\oplus H^{2}(\m ) \stackrel{f_{4}}\rightarrow H^{2}(L)\rightarrow 0\\
& 0\rightarrow H^{1}(V)\oplus H^{1}(\m )\stackrel{f_{1}}\rightarrow 0 \stackrel{f_{2}}\rightarrow \ZZ ^{2}\stackrel{f_{3}}\rightarrow H^{2}(V)\oplus H^{2}(\m )\stackrel{f_{4}}\rightarrow \ZZ _{mn-1}\rightarrow 0
\end{eqnarray*}  
At the beginning and at the end of the sequence we have zeroes since $H_{1}(X)=0$. Since $L$ is the lens space $L(mn-1,n)$, we have $H^{2}(L)=\ZZ _{mn-1}$ and $H^{1}(L)=0$. It follows from the sequence that $H^{1}(V)=H^{1}(\m )=0$, so $H_{1}(V)=\tau $ is a torsion group by the universal coefficient theorem. The group $H_{2}(\m )=\ZZ ^{2}$ is spanned by the homology classes of the spheres $\Sigma _{1}$ and $\Sigma _{2}$, so the cohomology group $H^{2}(\m )$ has rank two. It follows that $H^{2}(V)\cong \tau $ and $H_{2}(V)=0$. Now we can write down the homology long exact sequence of the pair $(V,-L)$:   
\begin{eqnarray*}
& \rightarrow H_{2}(-L)\stackrel{g_{1}}\longrightarrow H_{2}(V)\stackrel{g_{2}}\longrightarrow H_{2}(V,-L)\stackrel{g_{3}}\longrightarrow H_{1}(-L)\stackrel{g_{4}}\longrightarrow H_{1}(V)\rightarrow \ldots \\
& \rightarrow 0 \stackrel{g_{1}}\longrightarrow 0\stackrel{g_{2}}\longrightarrow \tau \stackrel{g_{3}}\longrightarrow \ZZ _{mn-1}\stackrel{g_{4}}\longrightarrow \tau \stackrel{0}\rightarrow \ldots 
\end{eqnarray*}
It follows from this sequence that $\tau $ is a subgroup of $\ZZ _{mn-1}$ with quotient group $\ZZ _{mn-1}/\tau \cong \tau $, thus $|\tau |^{2}=mn-1$. Those $\sp $ structures on $-L$ which are restrictions of $\sp $ structures on $V$ correspond to the image of the map $H^{2}(V)\to H^{2}(-L)$, which is the monomorphism $\tau \to \ZZ _{mn-1}$. For every $\sp $ structure on $-L$ which is the restriction of a $\sp $ structure on $V$ we can apply Theorem \ref{inequality} to obtain the estimate $d(-L,\mathfrak{s})\geq 0$ and consequently $d(L,\mathfrak{s})\leq 0$ . Since $V$ has a trivial intersection form, we can also apply the same theorem for the pair $(-V,L)$ to obtain $d(L,\mathfrak{s})\geq 0$, from which the equality follows. 
\end{proof}

\begin{subsection}{Single plumbings inside $\CC P^{2}\# \CC P^{2}$}
\label{CP1}
Let $X=\CC P^{2}\# \CC P^{2}$ and denote by $(e_{1},e_{2})$ the standard basis for $H_{2}(X)$. As remarked in Subsection \ref{CP}, the classes in $H_{2}(X)$ which are representable by spheres have the form $x_{1}e_{1}+x_{2}e_{2}$ with $(x_{1},x_{2})\in \{0,\pm 1,\pm 2\}^{2}\backslash \{(0,0)\}$. Consider a pair of such classes with algebraic intersection 1: $\alpha =2e_{1}+e_{2}$ and $\beta =e_{1}-e_{2}$. We have $m=\alpha ^{2}=5$ and $n=\beta ^{2}=2$ so $L=L(9,2)$ and the $d$--invariants are given by
\begin{xalignat*}{1}
& d(L(9,2),i)=\frac{(i-5)^{2}}{18}-\frac{(j-1)^{2}}{2}
\end{xalignat*}
for $0\leq i\leq 8$, where $j$ is the reduction of $i$ (mod $2$). We calculate 
\begin{xalignat*}{1}
& d(L,0)=d(L,1)=\frac{8}{9}\qquad d(L,2)=d(L,5)=d(L,8)=0\\
& d(L,3)=d(L,7)=\frac{2}{9}\qquad d(L,4)=d(L,6)=-\frac{4}{9}
\end{xalignat*}
\begin{figure}[here]
\labellist
\footnotesize \hair 2pt
\pinlabel $2e_{1}+e_{2}$ at 420 346
\pinlabel $e_{1}-e_{2}$ at 423 163
\endlabellist
\begin{center}
\includegraphics[scale=0.50]{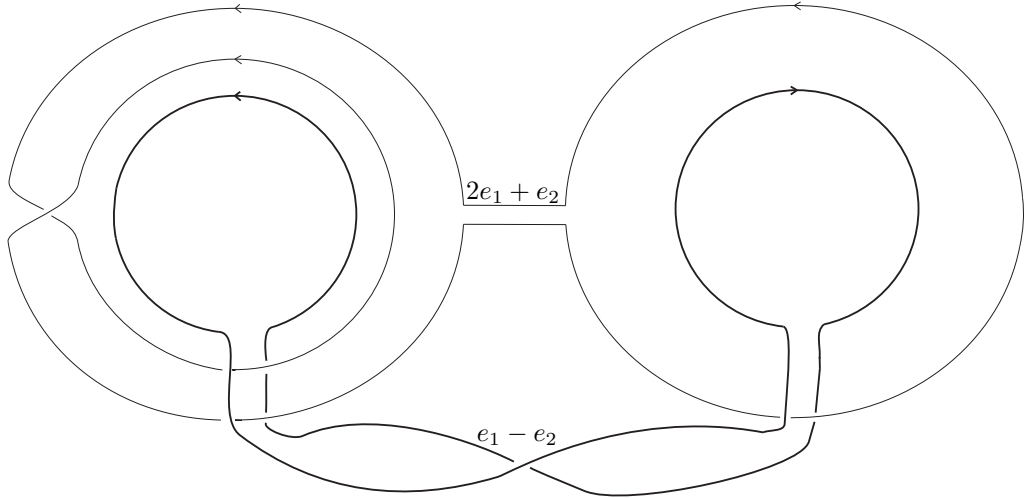}
\caption{Attaching circles of the 2-handles representing classes $2e_{1}+e_{2},e_{1}-e_{2}\in \CC P^{2}\# \CC P^{2}$}
\label{fig:CP1}
\end{center}
\end{figure}
We can see that there are three $\sp $ structures with $d$-invariant equal to 0, in accordance with Proposition \ref{prop4}. Thus, the spheres representing classes $\alpha $ and $\beta $ can have only one geometric intersection inside $\CC P^{2}\# \CC P^{2}$. Indeed, the two spheres can be chosen in such a way, following the construction of Ruberman \cite{DR} described in Subsubsection \ref{FirstCP}. We represent the class $\alpha =2e_{1}+e_{2}$ by taking two copies of $h_{1}$, resolve their double point to get a single disk, then make a boundary connected sum with $h_{2}$ (with coherent orientations) and add a disk in $B^{4}$ to the resulting surface. Similarly, we represent the class $\beta =e_{1}-e_{2}$ by taking a boundary connected sum of $h_{1}$ and $h_{2}$ with reversed orientations and adding a disk in $B^{4}$, see Figure \ref{fig:CP1}. The attaching circles of the 2-handles thus achieved can be moved by an isotopy to form the Hopf link, which shows that the two representatives have only one geometric intersection.
    
\end{subsection}

\begin{subsection}{Single plumbings inside $S^{2}\times S^{2}\# S^{2}\times S^{2}$}
\label{S21}

Consider the 4--manifold $X=S^{2}\times S^{2}\# S^{2}\times S^{2}$ and two classes $\alpha =(2k+1)e_{1}+2e_{2}$ and $\beta =-ke_{1}+e_{2}+2ke_{3}+e_{4}$ in $H_{2}(X)$, where $k$ is a positive integer. We have $\alpha ^{2}=4(2k+1)=m$, $\beta ^{2}=2k=n$ and $\alpha \cdot \beta =1$. Since $\alpha $ and $\beta $ are primitive noncharacteristic classes, they are represented by embedded spheres in $X$ by \cite[Theorem 3]{WALL}. We will prove here Theorem \ref{app1} which says: Any two spheres representing the classes $\alpha $ and $\beta $ intersect with at least 3 geometric intersections for all $k>1$. 

\begin{proof}[Proof of Theorem \ref{app1}]
Suppose the two spheres intersect with only one geometric intersection; then a regular neighborhood of their configuration forms the plumbing $\m $ inside $X$. Denote by $V=X\backslash \operatorname{Int}(\m )$ its complementary submanifold and let $L=\partial V$.  

We would like to define a $\sp $ structure $\mathfrak{t}_{i}$ on $X$, for which the restriction $\mathfrak{t}_{i}|_{\m }=\mathfrak{s}_{i}$. Then we will find the restriction $\mathfrak{t}_{i}|_{V}$ to the complementary sub\-ma\-ni\-fold and apply Theorem \ref{inequality}. Let $\mathfrak{t}_{i}\in \operatorname{Spin}^{c}(X)$ be the unique $\sp $ structure for which the following holds:
\begin{align*}
& \langle c_{1}(\mathfrak{t}_{i}),e_{1}\rangle =0\\
& \langle c_{1}(\mathfrak{t}_{i}),e_{2}\rangle =2(2k+1)\\
& \langle c_{1}(\mathfrak{t}_{i}),e_{3}\rangle =-2\\
& \langle c_{1}(\mathfrak{t}_{i}),e_{4}\rangle =2(k-i-1)
\end{align*}
Then we have $\langle c_{1}(\mathfrak{t}_{i}),\alpha \rangle =4(2k+1)=m$ and $\langle c_{1}(\mathfrak{t}_{i}),\beta \rangle =2k-2i=n-2i$, which means that $\mathfrak{t}_{i}|_{\m }$ concides with the $\sp $ structure $\mathfrak{s}_{i}$ defined in \eqref{defs}. As we have shown,  the correction term $d(L,\mathfrak{s}_{i}|_{L})$ is given by the Formula \eqref{lensD}.  
Now let us find the restriction $\mathfrak{t}_{i}|_{V}$. The image of the inclusion homomorphism $H_{2}(V)\to H_{2}(X)$ is spanned by the two classes $\gamma =-(2k+1)e_{1}+2e_{2}+(4k+1)e_{3}$ and $\delta =-2ke_{3}+e_{4}$ which are both orthogonal to $\alpha $ and $\beta $. We calculate
\begin{align*}
& \langle c_{1}(\mathfrak{t}_{i}),\gamma \rangle =2\\
& \langle c_{1}(\mathfrak{t}_{i}),\delta \rangle =6k-2i-2=3n-2i-2
\end{align*}
Since $\gamma ^{2}=-m$, $\delta ^{2}=-2n$ and $\gamma \cdot \delta =4k+1=\frac{m-2}{2}$, the intersection form on $V$ is given by the matrix $Q_{V}=\left( \begin{array}{cc}
-m & \frac{m-2}{2} \\
\frac{m-2}{2} & -2n \\
\end{array} \right )$ with $\operatorname{det}Q_{V}=mn-1$. The square of $c_{1}(\mathfrak{t}_{i}|_{V})$ is then calculated by 
\begin{xalignat*}{1}
& c_{1}(\mathfrak{t}_{i}|_{V})^{2}=-\frac{8n+m(3n-2i-2)^{2}+2(m-2)(3n-2i-2)}{mn-1}\;.
\end{xalignat*} Now Theorem \ref{inequality} gives us the inequality $c_{1}(\mathfrak{t}_{i}|_{V})^{2}+2\leq 4d(L,i)$. Using the Equation \eqref{lensD}, we compare
\begin{align*}
 -\frac{8n+m(3n-2i-2)^{2}+2(m-2)(3n-2i-2)}{mn-1}\leq -\frac{nm^{2}+m(n-2i)^{2}-2m(n-2i)}{mn-1}
\end{align*} and by simplifying we get the inequality $$(mn-1)(k-i-1)\geq 0\;,$$ which is not valid for $i\geq k$. When applying the Formula \eqref{lensD} we assumed that $0\leq i<n-1=2k-1$. Thus, the $\sp $ structure $\mathfrak{s}_{k}|_{L}$ does not satisfy the inequality in Theorem \ref{inequality} whenever $k>1$. Therefore, the two spheres representing the classes $\alpha $ and $\beta $ have at least three geometric intersections for all $k>1$. 
\end{proof}

\end{subsection}
\end{section}

\bibliographystyle{plain}
\bibliography{lit1}

\end{document}